\numberwithin{equation}{section}
\newtheorem{Theorem}{Theorem}[section]
\newtheorem{Proposition}[Theorem]{Proposition}
\newtheorem{Corollary}[Theorem]{Corollary}
\newtheorem{Lemma}[Theorem]{Lemma}
\newtheorem{Example}[Theorem]{Example}
\newtheorem{Definition}[Theorem]{Definition}
\def\rank{\mathrm{rank}\,}
\def\bbZ{\mathbb{Z}}
\def\bbR{\mathbb{R}}
\def\VN{V_m|_{[N_1, N_2]}}
\def\supp{\mathrm{supp}\ }
\begin{document}

%% use the tnoteref command within \title for footnotes;
%% use the tnotetext command for theassociated footnote;
%% use the fnref command within \author or \address for footnotes;
%% use the fntext command for theassociated footnote;
%% use the corref command within \author for corresponding author footnotes;
%% use the cortext command for theassociated footnote;
%% use the ead command for the email address,
%% and the form \ead[url] for the home page:

\title{Local and Global Phaseless Sampling in Real Spline Spaces\thanks{
 This work was partially supported by the
National Natural Science Foundation of China (11371200, 11525104 and  11531013).}
}

\author{Wenchang Sun \\
\footnotesize School of Mathematical Sciences and LPMC,  Nankai University,       Tianjin~300071, China \mbox{}\hskip -1em \mbox{}\\
\footnotesize Email: \,\,\,  sunwch@nankai.edu.cn}

\date{}

\maketitle

\begin{abstract}
We study the recovery of functions in real spline spaces from unsigned sampled values.
We consider two types of recovery. The one is to recover functions locally from finitely many unsigned samples.
And the other is to recover functions on the whole line from infinitely many unsigned samples.
In both cases, we give characterizations for a sequence of distinct points to be a phaseless sampling sequence, at which
any nonseparable  function is determined up to a sign on an interval or on the whole line by its unsigned sampled values.
Moreover, for the case of local recovery, we also study the almost phaseless sampling and give a necessary and sufficient condition for a sequence of points to
admit local recovery for almost all functions.
\end{abstract}
\textbf{Keywords}.\,\,Phase retrieval; phaseless sampling; spline functions; spline spaces.

MSC 2010:  42C15, 46C05.

\section{Introduction and Main Results}

The sampling theory is one of the most powerful results in signal analysis.
It says that when a function satisfies certain conditions,  it can be recovered from sampled values.
In practice, it might happen that we have only intensity measurements. That is, sampled values are phaseless.
To recover a function from intensity measurements, we have to study the problem of phase retrieval,
which arises in the recovery of functions given the magnitude of its Fourier transform.
We refer to the review paper \cite{Shechtman2015} for an introduction on this topic.

Recently, many works have been done on the phase retrieval problem for general frames since
Balan, Casazza  and Edidin \cite{BCE2006} introduced the concept of phaseless reconstruction
in the setting of frame theory.
For the case of finite-dimensional, various aspects to this problem which include the uniqueness and the stability of solutions
were well studied \cite{BCE2006,Bandeira2014,Bandeira2014b,Bodmann2016,CCPW2016,CLNZ2016,Edidin2017,GKK2017,HES2016,IVW2017,IVW2016,QSHHS2016,QBP2016,TEM2016}.
Further generalizations including
norm retrieval  \cite{bahmanpour2014,Casazza2017}
and phase retrieval from projections \cite{Cahill2015,Edidin2017} were also studied.

For the case of infinite-dimensional, the problem becomes very different \cite{Alaifari2016,alaifari2016b,Mallat2015,Pohl2014,Pohl2014b,Shenoy2016,Thakur2011,yang2013}.
In particular, it was shown by Cahill, Casazza and Daubechies \cite{JCD2016} that
phase retrieval is never uniformly stable in the infinite-dimensional case.
And in \cite{CCSW2016,Cheng2017}, Chen, Cheng, Jiang, Sun and Wang  studied phase retrieval of real-valued functions in shift-invariant spaces.
They gave some density results on the sequence of phaseless sampling points and studied the  stability of phase retrieval.
They showed that not all functions in such spaces can be recovered from intensity measurements.
In particular, separable functions can not be recovered up to a sign.
Recall that a function $f$ in a function space $H$ is said to be separable if $f=f_1+f_2$ for some $f_1, f_2\in H\setminus\{0\}$
with $f_1(x)f_2(x) = 0$.

In this paper, we study the problem of phaseless sampling in real spline spaces.
Specifically, let
\[
   \varphi_m = \chi^{}_{[0,1]} * \cdots * \chi^{}_{[0,1]}  \quad  ( m+1 \mathrm{\,\, terms}),\,\, m\ge 1
\]
be the $m$-degree B-spline and
\[
  V_m = \Big\{\sum_{k\in\bbZ} c_n\varphi_m(\cdot-n):\,c_n\in\bbR\Big\}
\]
be the real spline space generated by $\varphi_m$.
Note that $\varphi_m$ is compactly supported. The series is well defined on $\bbR$ for any real sequence $\{c_n:\,n\in\bbR\}$.
The problem is to recover a function $f$ from its unsigned sampled values $|f(x_i)|$, where $\{x_i:\,  i\in I\}$ is a sequence of sampling points.

One of the fundamental problems for the phaseless sampling
is to determine sequences of sampling points which admit a local or global recovery of functions in given function spaces.
In this paper, we study the characterization of sequences of sampling points at which we can recover any nonseparable function  in
spline spaces from unsigned samples.

We consider two types of phaseless sampling problems for functions in $V_m$.
The one is local phaseless sampling, i.e., to recover functions on an finite interval from finitely many unsigned samples.
Specifically, given two integers $N_1<N_2$ and a sequence of distinct points
$E:=\{x_i:\, 1\le i\le N\}\subset [N_1, N_2]$, we search for conditions on $E$ which
guarantee  a local reconstruction of $f$ on $[N_1, N_2]$ up to a sign from its unsigned sampled values $|f(x_i)|$, $1\le i\le N$.

Local phaseless sampling is  practically useful since measured signals are always time limited.
For convenience, we introduce the following definition.

Let $\VN$  be the restriction of $V_m$
 on $[N_1, N_2]$. That is, $f\in\! \VN$ if and only if $f = g \cdot \chi_{[N_1,N_2]}$ for some $g\in V_m$.

\begin{Definition}
We call a sequence $E\subset [N_1, N_2]$ consisting of distinct points  a \emph{(local) phaseless sampling sequence} for $\VN$
if
any nonseparable function $f\in\VN$ is determined up to a sign by its unsigned sampled values on $E$.
\end{Definition}

We give a necessary and sufficient condition for a sequence to be a local phaseless sampling sequence.

\begin{Theorem}\label{thm:PR}
Let $N_1<N_2$ be integers.
A sequence $E\subset [N_1, N_2]$ consisting of distinct points is a phaseless sampling sequence for $\VN$ if and only if it satisfies the followings,
\begin{eqnarray}
\# E &\ge& 2(N_2-N_1+m)-1,          \label{eq:PR:1}               \\
\# (E\cap [N_1, N_1+k)) &\ge& 2k+m-1,         \quad   1\le k\le N_2-N_1,     \label{eq:PR:2}  \\
\# (E\cap (N_2-k, N_2]) &\ge& 2k+m-1,  \quad   1\le k\le N_2-N_1,\label{eq:PR:3} \\
\# (E\cap(n_1,n_2)) &\ge& 2(n_2-n_1)-1,\quad   N_1\le n_1<n_2\le N_2,           \label{eq:PR:4}
\end{eqnarray}
where $\# E$ denotes the cardinality of a sequence $E$.

Moreover, if $E$ meets the above conditions and $|f_1(x)|=|f_2(x)|$ on $E$ for some $f_1,f_2\in V_m$,
then $|f_1(x)| = |f_2(x)|$ on $[N_1,N_2]$.
\end{Theorem}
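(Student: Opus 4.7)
The plan is to split the argument into sufficiency (which also delivers the moreover clause) and necessity, with sufficiency carrying the technical weight. I would first reduce the main statement to the moreover clause. Given $f_1, f_2 \in \VN$ with $|f_1|=|f_2|$ on $E$, the moreover clause yields $|f_1|=|f_2|$ on $[N_1, N_2]$, hence $(f_1 - f_2)(f_1 + f_2) \equiv 0$ on $[N_1, N_2]$. Writing $f_1 = (f_1 + f_2)/2 + (f_1 - f_2)/2$ exhibits $f_1$ as a sum of two elements of $\VN$ with vanishing product; nonseparability of $f_1$ forces one summand to be zero, giving $f_2 = \pm f_1$ on $[N_1, N_2]$. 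The theorem therefore reduces to proving the moreover clause from (\ref{eq:PR:1})--(\ref{eq:PR:4}) and to exhibiting counterexamples when any of these conditions fails.

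For the moreover clause (sufficiency), set $g := f_1 - f_2$ and $h := f_1 + f_2$; both lie in $\VN$ and satisfy $gh \equiv 0$ on $E$. The goal is $gh \equiv 0$ on $[N_1, N_2]$, which, since on each unit interval $(n, n+1)$ both $g, h$ are polynomials of degree $\le m$, reduces to showing at least one of $g, h$ vanishes identically on each such interval. Partition $\{N_1, \ldots, N_2 - 1\}$ into $A = \{n : g \equiv 0 \text{ on } (n, n+1)\}$, $B$ analogously for $h$, and $C := \{N_1, \ldots, N_2-1\} \setminus (A \cup B)$; the claim becomes $C = \emptyset$. Assuming for contradiction that $C \ne \emptyset$, pick a maximal consecutive block $\{n_1, \ldots, n_2 - 1\} \subset C$. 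The $C^{m-1}$ continuity of $g$ or $h$ at $n_1$, combined with its vanishing on the adjacent outside interval, forces the corresponding piece on $(n_1, n_1+1)$ to be a pure monomial $c(x-n_1)^m$ with no zeros in the open interval; symmetrically at $n_2$. All zeros of $gh$ in the two boundary intervals of the block therefore come from the other factor (at most $m$ each), while interior unit intervals contribute at most $2m$ zeros, and the $n_2 - n_1 - 1$ interior integer knots at most one apiece. Comparing the resulting upper bound on $\#(E \cap (n_1, n_2))$ with the lower bound furnished by (\ref{eq:PR:4}), or with (\ref{eq:PR:1})--(\ref{eq:PR:3}) when the block abuts $N_1$ or $N_2$, yields the contradiction.

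For necessity I would construct, whenever one of (\ref{eq:PR:1})--(\ref{eq:PR:4}) fails, nonseparable $(f_1, f_2) \in \VN \times \VN$ with $|f_1|=|f_2|$ on $E$ but $f_1 \ne \pm f_2$. A failure of (\ref{eq:PR:1}) is handled by parameter counting on the $(2(N_2-N_1+m)-1)$-dimensional variety of products $gh$: too few linear constraints $gh(x_i) = 0$ cannot force $gh \equiv 0$. Failures of (\ref{eq:PR:2}) or (\ref{eq:PR:3}) are handled by boundary-localized constructions exploiting the extremal B-splines that degenerate to the monomial form above; (\ref{eq:PR:4}) is analogous with the construction confined to the deficient subinterval. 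The \textbf{main obstacle} is the tightness of the block-counting estimate in the sufficiency step: the naive upper bound $(2m+1)(n_2 - n_1 - 1)$ on $\#(E \cap (n_1, n_2))$ does not strictly beat the lower bound $2(n_2 - n_1) - 1$ from (\ref{eq:PR:4}) when $n_2 - n_1 \ge 2$. Closing this gap requires a more refined case analysis exploiting $C^{m-1}$ continuity at the interior knots of the block, ruling out configurations in which a polynomial piece would have too many zeros in its unit interval (and so must vanish, contradicting maximality of the block). Balancing this bookkeeping against all four counting conditions is where the bulk of the technical work lies.
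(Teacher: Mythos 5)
Your overall architecture is sound: reducing the theorem to the ``moreover'' clause via the factorization $f_1=\tfrac{1}{2}(f_1+f_2)+\tfrac{1}{2}(f_1-f_2)$ and nonseparability is correct, and setting $g=f_1-f_2$, $h=f_1+f_2$ with $gh=0$ on $E$ is exactly the right starting point (the paper works with the same objects, via the splitting $E_1=\{x_i: h(x_i)=0\}$, $E_2=\{x_i: g(x_i)=0\}$). But the sufficiency argument has a genuine gap at precisely the point you flag, and it is not a bookkeeping issue that ``refined case analysis'' will close. On a maximal block $(n_1,n_2)$ with $n_2-n_1\ge 2$, naive zero counting gives an upper bound of order $2m(n_2-n_1)$ on $\#(E\cap(n_1,n_2))$, far above the lower bound $2(n_2-n_1)-1$ from (\ref{eq:PR:4}); no amount of exploiting $C^{m-1}$ continuity at interior knots alone repairs this, because a degree-$m$ polynomial piece genuinely can have $m$ zeros in a unit interval without vanishing. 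The missing idea is qualitative, not quantitative: one must upgrade ``$g$ has many zeros'' to ``$g$ vanishes identically on a subinterval'' using the Sch\"onberg--Whitney characterization of local sampling sequences (Proposition~\ref{prop:local sampling}). Concretely, the paper's Lemma~\ref{Lm:L3a} shows that the counting conditions force the existence of two \emph{adjacent} integer intervals $[i_1,i_1']$ and $[i_2,i_2']$ with $i_1'=i_2$ on which $E_1$ and $E_2$ are, respectively, sampling sequences, whence $h\equiv 0$ on $[i_1,i_1']$ and $g\equiv 0$ on $[i_2,i_2']$; this kills $m$ consecutive B-spline coefficients of both $g$ and $h$ simultaneously. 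Even then the conclusion does not follow in one step: the paper needs an induction on $N_2-N_1$ (Lemma~\ref{Lm:L4}) to handle the case where the supports of the surviving coefficients of $g$ and $h$ fail to straddle the common knot $i_2$, and a separate argument (via Lemma~\ref{Lm:sep}) to show that a counterexample would force $f_1,f_2$ to be separable. None of this machinery is present or replaceable by your block-counting scheme.

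The necessity direction is also materially incomplete. The conditions ``$g(x_i)h(x_i)=0$'' are not linear in $(g,h)$, so ``parameter counting on the variety of products'' does not directly produce a counterexample; moreover any counterexample must be certified \emph{nonseparable}, which is the delicate part. The paper handles this by explicitly splitting $E$ into two subsequences $E_1,E_2$ adapted to the failing condition, solving $A_1c=0$ and $A_2c'=0$ for the associated B-spline collocation matrices, rescaling so that $c_n\pm c_n'\ne 0$ for enough indices (condition (\ref{eq:cc1})), and invoking the characterization of separable functions (Lemma~\ref{Lm:sep}) to verify nonseparability of $f_{1,2}=\sum\tfrac{1}{2}(c_n\pm c_n')\varphi_m(\cdot-n)$. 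Your sketch gestures at the right kind of construction near the endpoints but supplies neither the collocation-matrix argument nor the nonseparability verification, both of which carry real content.
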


As shown in \cite{CCSW2016,Cheng2017}, separable functions can not be recovered up to a sign from intensity measurements. Nevertheless, we see from
Theorem~\ref{thm:PR} that if $E$ is a phaseless sampling sequence for $\VN$, then for any $f\in \VN$, $|f(x)|\cdot \chi_{[N_1,N_2]}(x)$ is determined uniquely by its
sampled values over $E$.

In \cite{Fickus2014}, Fickus,   Mixon,   Nelson  and   Wang studied the problem of almost phase retrieval for general frames. Here ``almost''
means that for almost all functions in a finite-dimensional function space, it is possible to recover the function from intensity measurements.
When almost phase retrieval is considered, we need only very few measurements.
In this paper, we study the problem of almost phaseless sampling on $\VN$.
We give a characterization for a sequence to be an almost phaseless sampling sequence for $\VN$,
which means that almost all functions in $\VN$ are uniquely determined up to a sign by their unsigned samples on such a sequence.

The other type of problem we are considered is the global phaseless sampling.
That is, to recover a function on the whole line from its unsigned sampled values.
Again, we study the construction of sequences $E\subset \bbR$ which admit a global phase retrieval, that is,
any nonseparable function  in $V_m$ is determined up to a sign by its unsigned sampled values on $E$.
We call such sequences phaseless sampling sequences for $V_m$.

A characterization of phaseless sampling sequences for $V_m$ reads as followings.

\begin{Theorem}\label{thm:global}
Let $E\subset \bbR$ be a sequence of distinct points.
For $m\ge 2$, $E$ is a phaseless sampling sequence for $V_m$ if and only if it satisfies the following
(P1) and (P2).
\begin{enumerate}
\item[\upshape (P1)] For any integers $n_1<n_2$, $\#(E\cap(n_1,n_2))  \ge 2(n_2-n_1)-1$;
\item[\upshape (P2)] For any integer $n_0$, there exist integers $n_2>n_1\ge n_0$ and $i_1<i_2\le n_0$ such that
    $\#(E\cap [n_1, n_2]) \ge 2(n_2-n_1+m)-1$
    and $\#(E\cap [i_1, i_2]) \ge 2(i_2-i_1+m)-1$.
\end{enumerate}

For $m=1$,
$E$ is a phaseless sampling sequence for $V_1$ if and only if it satisfies (P1) and
\begin{enumerate}
\item [\upshape (P2')]  there exists an increasing sequence of integers $\{n_k:\, k_1\le k\le k_2\}$, which contains at least one point,
such that  $\#(E\cap[n_k-1,n_k])\ge 3$ for $k_1\le k\le k_2$ and
$\#(E\cap(n,n+1))=2$ for $n\not\in[n_{k_1}, n_{k_2}-1]$ if $k_1>-\infty $ or $k_2<\infty$.
\end{enumerate}

Moreover, if $|f_1(x)|=|f_2(x)|$ on $E$ for some $f_1,f_2\in V_m$
and $E$ meets (P1) and (P2) for $m\ge 2$ or (P1) and (P2') for $m=1$,
then $|f_1(x)| = |f_2(x)|$ for any $x\in\bbR$.
\end{Theorem}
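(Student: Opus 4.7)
The plan is to prove necessity and sufficiency separately, using Theorem~\ref{thm:PR} as the local tool in both directions. For sufficiency the overall strategy is to use (P2) (or (P2') when $m=1$) to exhibit arbitrarily large integer intervals on which the conditions of Theorem~\ref{thm:PR} are satisfied, and then invoke its moreover part pointwise.

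For necessity, first suppose (P1) fails: $\#(E\cap(n_1,n_2))<2(n_2-n_1)-1$ for some integers $n_1<n_2$. Then condition~\eqref{eq:PR:4} of Theorem~\ref{thm:PR} fails for every integer interval containing $[n_1,n_2]$, yielding two nonseparable elements of $V_m|_{[N_1,N_2]}$ whose absolute values agree on $E\cap[N_1,N_2]$ but which are not $\pm$-related. By choosing this ambiguity to be a compactly supported perturbation inside $(n_1,n_2)$, one lifts to functions in $V_m$ that vanish outside a fixed compact set, contradicting phaseless sampling for $V_m$. Next, suppose (P2) fails on, say, the right: there is some $n_0$ with $\#(E\cap[n_1,n_2])<2(n_2-n_1+m)-1$ for all $n_2>n_1\ge n_0$. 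Then \eqref{eq:PR:1} fails on every $[n_0,N_2]$, and the corresponding local counterexample lifts to a global one whose ambiguity lies beyond $n_0$. The argument for (P2') with $m=1$ is analogous, using that splines in $V_1$ are linear on each unit segment.

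For sufficiency with $m\ge 2$, let $f_1,f_2\in V_m$ satisfy $|f_1|=|f_2|$ on $E$ and fix an arbitrary $x_0\in\bbR$. Choose an integer $n_0>x_0$ and apply (P2) to obtain integers $n_2>n_1\ge n_0$ and $i_1<i_2\le n_0$ meeting the two total counts. Set $[N_1,N_2]=[i_1,n_2]$, which contains $x_0$. The plan is to verify conditions \eqref{eq:PR:1}--\eqref{eq:PR:4} of Theorem~\ref{thm:PR} for $E\cap[N_1,N_2]$ and then invoke its moreover part to conclude $|f_1|=|f_2|$ on $[N_1,N_2]$, in particular at $x_0$, and finally on $\bbR$ by arbitrariness of $x_0$. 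Condition~\eqref{eq:PR:4} is exactly (P1), and \eqref{eq:PR:1} follows by decomposing $[i_1,n_2]=[i_1,i_2]\cup(i_2,n_1)\cup[n_1,n_2]$ (with the middle possibly empty or overlapping at a single point if $i_2=n_1$), applying the two (P2) bounds on the anchors and (P1) on the middle, and summing to at least $2(n_2-i_1)+4m-3$, which is $\ge 2(n_2-i_1+m)-1$ for all $m\ge 1$.

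The main obstacle is to verify the boundary density conditions \eqref{eq:PR:2} and \eqref{eq:PR:3}: $\#(E\cap[i_1,i_1+k))\ge 2k+m-1$ and $\#(E\cap(n_2-k,n_2])\ge 2k+m-1$ for $1\le k\le n_2-i_1$. Since (P1) only lower-bounds counts on open intervals and (P2) only controls the total count on each anchor, neither condition directly tells us how the points are distributed near the endpoints. To overcome this, I would require the anchors from (P2) to be minimal, choosing $[i_1,i_2]$ and $[n_1,n_2]$ so that shrinking either by one unit from the outer end violates the (P2) bound. Minimality forces the $2m$ excess points guaranteed by (P2), beyond what (P1) already supplies, to concentrate near $i_1$ and $n_2$ respectively, from which the required boundary density can be extracted by a careful combinatorial count applied unit-by-unit. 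For $m=1$, condition (P2') is tailored to the piecewise linear structure: at an integer $n_k$ with $\#(E\cap[n_k-1,n_k])\ge 3$, the degree-two polynomial $(f_1-f_2)(f_1+f_2)$ has three zeros on $[n_k-1,n_k]$ and so vanishes identically there, providing the sign anchor, while the uniform two-per-segment density elsewhere propagates this identification to the whole line via matching boundary values at successive integers.
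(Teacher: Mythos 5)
There is a genuine gap, and it is fatal to your sufficiency argument. Your plan is to reduce to Theorem~\ref{thm:PR} on a window $[i_1,n_2]$ built from the two anchors of (P2), the main obstacle being the boundary conditions \eqref{eq:PR:2}--\eqref{eq:PR:3}. You propose that choosing the anchors minimal forces the $2m$ excess points to concentrate at the outer endpoints, but this is false: the excess can be spread evenly. Take $m\ge 3$, let $E$ contain no integers, put exactly $2$ points in every open unit interval except that in blocks of $2m-1$ consecutive unit intervals (placed periodically, arbitrarily far in both directions) each unit interval gets $3$ points. Then (P1) holds, and each block is an anchor since $3(2m-1)=2((2m-1)+m)-1$, so (P2) holds and $E$ is a phaseless sampling sequence by the theorem; yet every half-open unit interval $[N_1,N_1+1)$ contains at most $3<m+1$ points, so \emph{no} window whatsoever satisfies \eqref{eq:PR:2} with $k=1$. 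The reduction to the local theorem is therefore not merely hard to verify but impossible. The paper's sufficiency proof is genuinely global: it splits $E$ into $E_1=\{x_i: f_1(x_i)=-f_2(x_i)\}$ and $E_2=\{x_i: f_1(x_i)=f_2(x_i)\}$, observes that the factor-of-two oversampling in (P1)/(P2) forces, by pigeonhole, \emph{one} of $E_1,E_2$ to contain a local sampling set (only $L+m$ points needed) inside each anchor, and then uses Lemma~\ref{Lm:L3a} to produce two adjacent intervals sharing one endpoint on which $E_1$ and $E_2$ are respectively sampling sequences; this kills a block of $m$ consecutive coefficients of both $f_1+f_2$ and $f_1-f_2$ and, after a further case analysis, forces $f_1,f_2$ to be separable. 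That mechanism has no counterpart in your sketch.

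Your necessity argument for (P2) also has a gap. From the failure of \eqref{eq:PR:1} on $[n_0,N_2]$ you obtain a pair of functions ambiguous with respect to $E\cap[n_0,N_2]$, but nothing controls $|f_1|$ versus $|f_2|$ at the points of $E$ outside that window, and since (P1) may be assumed to hold, the deficiency sits at the boundary of the window, so the ambiguity cannot be localized to a compactly supported perturbation in its interior; the counterexamples for different $N_2$ need not cohere into one global pair. The paper instead shows that the failure of (P2) to the right of $n_0$ forces the counting function $\#(E\cap[n_0,n])-2(n-n_0)$ to be bounded and attain its maximum, extracts from this an essentially exact density-two structure to the right of the maximizer, and uses Proposition~\ref{prop:Sch} to build a single pair of nonseparable, \emph{infinitely supported} functions $f_1,f_2$ with $|f_1|=|f_2|$ on all of $E$ but $f_1\ne\pm f_2$. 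Your treatment of the necessity of (P1) (compactly supported ambiguity inside $(n_1,n_2)$ after an extremal choice of the gap) and of the $m=1$ anchors via the quadratic $(f_1-f_2)(f_1+f_2)$ is in the right spirit, but the two gaps above each require a substantially different argument, not a refinement of the one you give.
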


Again, although separable functions can not be recovered up to a sign from intensity measurements, we see from
Theorem~\ref{thm:global} that if $E$ is a phaseless sampling sequence for $V_m$, then for any $f\in V_m$, $|f(x)|$ is determined uniquely by its
sampled values over $E$.

The paper is organized as follows.
In Section 2, we study  the problem of almost phaseless sampling in $\VN$ and give a necessary and sufficient condition
for a sequence to be an almost phaseless sampling sequence.
And in Sections 3 and 4, we give proofs of Theorem~\ref{thm:PR}  and Theorem~\ref{thm:global}, respectively.
In Section 5, we present some examples to illustrate the main results.

\section{Almost Phaseless Sampling in Spline Spaces}

In this section, we study the local recovery of almost all functions in spline spaces from phaseless sampled values.

\begin{Definition}
We call   $E = \{x_i:\, 1\le i\le N\}\subset [N_1, N_2]$ an almost phaseless sampling sequence for $\VN$
 if
for any $  f \in \VN \setminus H_0$,
we can reconstruct $f$ up to a sign from the unsigned sample sequence $\{|f(x_i) |:\,1\le i\le N\}$,
where $H_0$ consists of finitely many proper subspaces of $\VN$ and therefore is of Lebesgue measure zero.
\end{Definition}

The main result in this section is the following characterizations of almost phaseless sampling sequences.

\begin{Theorem} \label{thm:almost}
Let $E \subset [N_1, N_2]$ be a sequence of distinct numbers.
Then $E$ is an almost phaseless sampling sequence for $\VN$
  if and only if it satisfies the following conditions,
\begin{eqnarray}
&& \#E \ge N_2-N_1+m+1, \label{eq:c0a}\\
&&  \#\big(E \cap [N_1, N_1+k)\big) \ge k+1, \quad 1\le k\le N_2-N_1, \label{eq:c1a}\\
&&  \#\big(E \cap (N_2-k, N_2]\big) \ge k+1, \quad 1\le k\le N_2-N_1,\label{eq:c2a}\\
&& \#\big(E \cap (n_1, n_2)\big) \ge n_2-n_1-m+1, \quad N_1\le n_1 < n_2 \le N_2.\label{eq:c3a}
\end{eqnarray}
\end{Theorem}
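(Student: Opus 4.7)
My plan is to translate the theorem into a rank condition on the sampling map, indexed by sign patterns on $E$. Set $d := N_2-N_1+m$; then $\VN$ has dimension $d$, with basis given by the restrictions of $\varphi_m(\cdot-n)$ for $N_1-m\le n\le N_2-1$. If $f,g\in\VN$ satisfy $|f(x_i)|=|g(x_i)|$ on $E$, put $u:=f+g$ and $v:=f-g$; both lie in $\VN$, satisfy $u(x_i)v(x_i)=0$ on $E$, and $g=\pm f$ iff $u=0$ or $v=0$. Each sign pattern $\epsilon\in\{\pm 1\}^E$ splits $E=E^+\sqcup E^-$, forcing $u|_{E^-}=0$ and $v|_{E^+}=0$. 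With $S:f\mapsto (f(x_i))_i$ the sampling map, $r^\pm:=\rank(S|_{E^\pm})$, and $V_F^\circ:=\{h\in\VN:h|_F=0\}$, provided $S$ is injective (so $V_{E^+}^\circ\cap V_{E^-}^\circ=\{0\}$), the $f$'s that fail recovery via $\epsilon$ lie in the subspace $V_{E^+}^\circ+V_{E^-}^\circ$, whose dimension is $2d-r^+-r^-$. Over the finitely many sign patterns, the bad set $H_0$ is a finite union of such subspaces, so almost phaseless sampling reduces to: $S$ is injective, and $r^++r^->d$ for every partition with $E^+,E^-\ne\emptyset$.

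For necessity, I would produce, for each failing condition, a partition violating the rank inequality. The essential dimension facts are $\dim V_m|_{[a,b]}=b-a+m$ for integers $a<b$ in $[N_1,N_2]$, and $\dim\{h\in\VN:\supp(h)\subset[n_1,n_2]\}=n_2-n_1-m$ (when $n_2-n_1\ge m$). If \eqref{eq:c0a} fails, any partition gives $r^++r^-\le\#E\le d$. If \eqref{eq:c1a} fails at some $k$, take $E^+:=E\cap[N_1,N_1+k)$: then $r^+\le k$ and $r^-\le\dim V_m|_{[N_1+k,N_2]}=d-k$, so $r^++r^-\le d$. The case of \eqref{eq:c2a} is symmetric. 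If \eqref{eq:c3a} fails at $(n_1,n_2)$, take $E^+:=E\cap(n_1,n_2)$: then $r^+\le n_2-n_1-m$, while $S|_{E^-}$ factors through restriction to $[N_1,n_1]\cup[n_2,N_2]$, whose image has codimension $n_2-n_1-m$, giving $r^-\le d-(n_2-n_1-m)$; together, $r^++r^-\le d$.

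For sufficiency, I would first prove injectivity of $S$ using a Schoenberg--Whitney-type argument (the four conditions imply the weaker ``$+0$'' versions sufficient for unique interpolation). Then I would argue by contradiction: if $r^++r^-=d$ (the baseline forced by $S$'s injectivity) for some non-trivial partition, then $\VN=V_{E^+}^\circ\oplus V_{E^-}^\circ$ and the sampling data decouples across the two parts; exploiting this direct-sum structure together with the localized B-spline expansion, one traces out integers where one of \eqref{eq:c0a}--\eqref{eq:c3a} is forced to fail. The main obstacle is precisely this combinatorial step: one must verify $r^++r^->d$ over exponentially many partitions, matching each partition to the appropriate local condition. The $+1$ slack in \eqref{eq:c0a}--\eqref{eq:c3a} is what upgrades the trivial inequality $r^++r^-\ge d$ (from injectivity) to strict, and carefully accounting for how $E^+$ interleaves with the integer grid---so that every possible partition type is covered by at least one of the four conditions---is the technical heart of the argument.
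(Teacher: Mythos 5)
Your reduction of almost phaseless sampling to the rank inequality $r^++r^->d$, where $d:=N_2-N_1+m=\dim\VN$, over all nontrivial partitions of $E$ is correct and is essentially the content of the paper's Theorem~\ref{thm:almost PR frames} and Lemma~\ref{Lm:L0}; your necessity argument is cleaner than the paper's in most cases. There is, however, one sub-case where your recipe breaks down: if \eqref{eq:c3a} fails with $n_2-n_1=m$, then $E\cap(n_1,n_2)=\emptyset$, your $E^+$ is empty, and the bound $r^-\le d-(n_2-n_1-m)=d$ yields nothing. You must instead split $E$ at the gap, taking $E^+=E\cap[N_1,n_1]$ and $E^-=E\cap[n_2,N_2]$, which gives $r^++r^-\le(n_1-N_1+m)+(N_2-n_2+m)=d$; this is exactly the block-diagonal structure of $\Phi^*$ that the paper exploits at the corresponding point of its necessity proof.

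The real gap is sufficiency. You correctly identify verifying $r^++r^->d$ over all partitions as ``the technical heart,'' but you do not carry it out, and the direct attack you sketch (matching each of exponentially many partitions to one of the four conditions) is genuinely hard: for a large $E$ the ranks $r^\pm$ are not determined by cardinalities alone, and the interleaving analysis you defer is where all the difficulty lives. The paper sidesteps this with a two-step reduction you are missing. First (Lemma~\ref{Lm:L2}) it extracts a subsequence $E'\subset E$ with $\#E'=d+1$ exactly, still satisfying \eqref{eq:c1a}--\eqref{eq:c3a}; since a superset of an almost phaseless sampling sequence is again one, it suffices to treat $E'$. Second, for $\#E'=d+1$ the conditions say precisely that deleting any single point leaves a Sch\"onberg--Whitney sampling set (Proposition~\ref{prop:local sampling}), i.e.\ the $d\times(d+1)$ matrix $\Phi'$ is full spark; its null space is then a line spanned by a vector $a$ with no zero entry, so the null spaces of $\Phi'D_s$ are the pairwise distinct lines $\bbR D_s a$, and Lemma~\ref{Lm:L0} concludes. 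Equivalently, in your language, full spark at redundancy one forces $r^++r^-=\#E^++\#E^-=d+1>d$ for every partition into nonempty parts. Without this reduction, or a worked-out substitute for the combinatorial step, your sufficiency direction is an outline rather than a proof.
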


Note that the set consisting of all separable functions in $\VN$ is of $(N_2-N_1+m)$-dimensional Lebesgure measure zero (see Lemma~\ref{Lm:sep}).
One might ask if it is possible to recover all nonseparable functions with an almost phaseless sampling sequence?
The answer is unfortunately negative. In fact, we see from the characterization for local phaseless sampling sequences (Theorem~\ref{thm:PR})
that if $E$ is only an almost phaseless sampling sequence, then many nonseparable functions are unrecoverable from its unsigned sampled values.

Before giving a proof of Theorem~\ref{thm:almost}, we introduce some results on the almost phase retrieval for general frames.

We call a frame $\{f_i:\, 1\le i\le N\}$  for $\mathbb R^n$ \emph{almost phase retrievable} if
for any $  f \in \mathbb R^n \setminus  E_0$,
we can reconstruct $f$ up to a sign from the sequence of unsigned frame coefficients $\{|\langle f, f_i\rangle |:\,1\le i\le N\}$,
where $ E_0$ consists of finitely many proper subspaces of $\bbR^n$ and therefore is of Lebesgue measure zero.

In \cite{Fickus2014,Zhongwei}, some necessary and sufficient conditions for a frame to be almost phase retrievable were given.
Here we give some further characterizations for  almost phase retrievable frames.

Denote by $S_N=\{(s_1,\ldots,s_N):\, s_1=1, s_i=\pm 1, i\ge 2\}$.
For an $n\times N$ matrix $A$, $A^*$ is the transpose of $A$ and $\mathcal N(A)$ is the null space of $A$, i.e.,
$\mathcal N(A) = \{x\in\bbR^N:\, Ax=0\}$.

Since frames for $\bbR^n$ are equivalent to $n\times N$ matrices with rank $n$, for convenience, we
also say a matrix is almost phase retrievable if its  column vectors form an
almost phase retrievable frame.

Let  $\varphi_m(\cdot-n)|_{[N_1,N_2]}$
 be the restriction of  $\varphi_m(\cdot-n)$
 on $[N_1, N_2]$.
That is,
\[
\varphi_m(x-n)|_{[N_1,N_2]} = \left\{
    \begin{array}{ll}
      \varphi_m(x-n), & x\in [N_1, N_2], \\
      0, & \mathrm{otherwise}.
    \end{array}
 \right.
\]
It is easy to see that
$\{\varphi_m(\cdot-n)|_{[N_1,N_2]}:\,N_1-m\le n\le N_2-1\}$ is a basis for $\VN$ \cite[Lemma 4]{SunZhou09}.
Therefore, $\dim \VN = N_2-N_1+m$.

\begin{Theorem} \label{thm:almost PR frames}
Suppose that $A$ is an $n\times N$ matrix whose column vectors form a frame for  $\mathbb R^n$, $n\ge 2$.
Let $s, s'\in S_N$ and $s \ne s'$.
Then the followings are equivalent.

\begin{enumerate}
\item \label{item:1}
$A$   is almost phase retrievable.

\item \label{item:2}
$D_s A^* \bbR^n \ne D_{s'} A^* \bbR^n$.

\item \label{item:3}
 $\mathcal N(A D_s) \ne \mathcal N(A D_{s'})$.

\item \label{item:4}
$
  \rank(AD_s)  <
  \rank\begin{pmatrix}
    AD_s \\
    AD_{s'}
  \end{pmatrix}
$.

\item \label{item:5}
$
  \rank(MD_s)  <
  \rank\begin{pmatrix}
    MD_s \\
    MD_{s'}
  \end{pmatrix}
$,
where $M$ is an $(N-n)\times N$ matrix whose null space is the range of $A^*$.

\end{enumerate}
\end{Theorem}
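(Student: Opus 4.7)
The plan is to first reduce (1)$\Leftrightarrow$(2) by unpacking the definition of almost phase retrievability in terms of sign ambiguities, and then to deduce (2)$\Leftrightarrow$(3)$\Leftrightarrow$(4)$\Leftrightarrow$(5) by straightforward linear algebra exploiting the orthogonality of each $D_s$.

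First I would observe that $|A^*f|=|A^*g|$ (componentwise) is equivalent to $A^*g=D_sA^*f$ for some $s\in\{\pm1\}^N$; replacing $g$ by $-g$ if necessary, I may assume $s\in S_N$. Such a $g\in\bbR^n$ exists iff $D_sA^*f\in\mathrm{range}(A^*)$, that is, iff $f\in V_s:=(A^*)^{-1}\bigl(D_s\,\mathrm{range}(A^*)\cap\mathrm{range}(A^*)\bigr)$, and the resulting $g$ then satisfies $g=\pm f$ iff $s=\mathbf{1}$. Hence the locus where phase retrieval fails is $B=\bigcup_{s\in S_N\setminus\{\mathbf{1}\}}V_s$, with each $V_s$ a linear subspace that is a proper subspace of $\bbR^n$ precisely when $D_s\,\mathrm{range}(A^*)\ne\mathrm{range}(A^*)$. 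Since $S_N$ is finite, $A$ is almost phase retrievable iff every $V_s$ with $s\ne\mathbf{1}$ is proper, which via the componentwise reparametrization $s''=s\cdot s'$ (note $(s\cdot s')_1=1$, so $s''\in S_N$, and $s''=\mathbf{1}\iff s=s'$) matches condition (2) for all distinct $s,s'\in S_N$.

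For the linear-algebra equivalences, since each $D_s$ is orthogonal, $\mathcal{N}(AD_s)=D_s\mathcal{N}(A)=(D_s\,\mathrm{range}(A^*))^\perp$, so (2)$\Leftrightarrow$(3) by taking orthogonal complements. For (4), since $\rank(AD_s)=n$ and the row space of $\left(\begin{smallmatrix}AD_s\\ AD_{s'}\end{smallmatrix}\right)$ equals $D_s\,\mathrm{range}(A^*)+D_{s'}\,\mathrm{range}(A^*)$, its rank exceeds $n$ iff the two $n$-dimensional summands differ, giving (2)$\Leftrightarrow$(4). For (5), the row space of $M$ is $\mathrm{range}(A^*)^\perp$, so by the same orthogonality the row space of $MD_s$ equals $D_s\,\mathrm{range}(A^*)^\perp=(D_s\,\mathrm{range}(A^*))^\perp$, and the same rank comparison yields (2)$\Leftrightarrow$(5).

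The main obstacle is bookkeeping with sign conventions: the ambiguity group $\{\pm1\}^N$ contains the global flip $g\mapsto-g$, and the index set $S_N$ is designed to quotient out exactly this $\pm$-action, so the reparametrization $s\cdot s'$ must be verified to respect the normalization $s_1=1$. Beyond that, the argument is essentially formal; a minor secondary check is that $B$ is automatically of Lebesgue measure zero because it is a finite union of proper subspaces, which follows from $|S_N|=2^{N-1}<\infty$.
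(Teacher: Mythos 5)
Your proof is correct in substance, and its linear-algebra core --- the identity $\mathcal N(AD_s)=D_s\mathcal N(A)=(D_s\,\mathrm{range}(A^*))^\perp$ and the comparison of the $n$-dimensional row spaces $D_s\,\mathrm{range}(A^*)$ --- is the same as the paper's. Two differences are worth recording. First, you prove (2)$\Leftrightarrow$(5) directly by computing the row space of $MD_s$ as $(D_s\,\mathrm{range}(A^*))^\perp$, whereas the paper disposes of item (5) by citing Zhong's Theorem 8; your route makes the theorem self-contained, which is a genuine improvement. Second, you organize (1)$\Leftrightarrow$(2) through the failure locus $B=\bigcup_{s\ne\mathbf 1}V_s$ and the reparametrization $s''=s\cdot s'$, while the paper runs two separate contradiction arguments over pairs $(s,s')$; the two are equivalent, but your normalization cleanly explains why it suffices to test $D_{s''}\,\mathrm{range}(A^*)\ne\mathrm{range}(A^*)$ for $s''\ne\mathbf 1$. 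One statement needs repair: ``the resulting $g$ satisfies $g=\pm f$ iff $s=\mathbf 1$'' is false when some frame coefficient $\langle f,\varphi_i\rangle$ vanishes at an index where $s_i=-1$, so the failure locus is only contained in $B$, not equal to it. This does not damage the argument: containment in $B$ gives the ``if'' direction, and for the ``only if'' direction, when $V_{s_0}=\bbR^n$ with $s_0\ne\mathbf 1$ every $f$ outside the null set $\{f:\langle f,\varphi_1\rangle=0\}\cup\{f:\langle f,\varphi_{i_0}\rangle=0\}$ (where $(s_0)_{i_0}=-1$) genuinely fails to be recoverable --- exactly the excision the paper also performs, and which, like the paper, tacitly assumes the frame vectors are nonzero.
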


\begin{proof}
Denote the column vectors of $A$ by $\varphi_1$, $\ldots$, $\varphi_N$.

(\ref{item:1})$\Rightarrow$(\ref{item:2}).\,\, Since $s\ne s'$, there is some $i_0\ge 2$ such that $s_{i_0} \ne s'_{i_0}$.
Without loss of generality, we assume that $s_{i_0}=1$ and $s'_{i_0}=-1$.
Let $E = \{x\in\bbR^n:\, \langle x, \varphi_1\rangle = 0$   or $\langle x, \varphi_{i_0}\rangle = 0 \}$.
Then $E$ is of measure zero.

Assume that $D_s A^* \bbR^n = D_{s'} A^* \bbR^n$. Then for any $x\in\bbR^n$, there is some $x'\in \bbR^n$
such that $D_sA^* x = D_{s'} A^* x'$. Hence
\[
  \langle x, \varphi_1\rangle = \langle x', \varphi_1\rangle
  \quad \mathrm{and}\quad
  \langle x, \varphi_{i_0}\rangle = -\langle x', \varphi_{i_0}\rangle.
\]
It follows that for $x\not\in E$, $x\ne \pm x'$, which contradicts with (\ref{item:1}).

(\ref{item:2})$\Rightarrow$(\ref{item:1}). For $s,s'\in S_N$ with $s\ne s'$, define
\[
  E_{s,s'} = \{x\in\bbR^n:\, \mbox{there is some $x'\in\bbR^n$ such that } D_sA^* x = D_{s'} A^* x'\}.
\]
Since $D_sA^*\bbR^n \ne D_{s'}A^*\bbR^n$,
$D_sA^*\bbR^n \cap D_{s'}A^*\bbR^n$ is a proper subspace of $D_sA^*\bbR^n$. Consequently,
$E_{s,s'}$ is a proper subspace of $\bbR^n$. Let
\[
  E_0 = \bigcup_{s,s'\in S_N, s\ne s' } E_{s,s'}.
\]
Then $E_0$ is the union of finitely many proper subspaces of $\bbR^n$. For any $x\in \bbR^n\setminus E_0$,
if there is some $x'\in\bbR^n$ such that
$|\langle x, \varphi_i\rangle| = |\langle x', \varphi_i\rangle|$,
then there is some $s\in S_N$ such that $A^* x = D_s A^* x'$ or $A^* x = -D_s A^* x'$.
Hence $x=x'$ or $x=-x'$. In other words, $A$ is almost phase retrievable.

(\ref{item:2})$\Leftrightarrow$(\ref{item:3}). It follows from the fact that $\mathcal N(A D_s) = (D_sA^*\bbR^n)^\bot$.

(\ref{item:3})$\Leftrightarrow$(\ref{item:4}). Observe that
$\rank(AD_s) = \rank(AD_{s'})$.
If
\begin{equation}\label{eq:e1}
  \rank(AD_s)  =
  \rank\begin{pmatrix}
    AD_s \\
    AD_{s'}
  \end{pmatrix},
\end{equation}
then there exists some invertible $n\times n$ matrix $P$ such that
$AD_s = P AD_{s'}$. And vice versa.  It follows that (\ref{eq:e1}) is equivalent to
$\mathcal N(A D_s) = \mathcal N(A D_{s'})$. This proves the equivalence of (\ref{item:3}) and (\ref{item:4}).

(\ref{item:1})$\Leftrightarrow$(\ref{item:5}).\,\,
See \cite[Theorem 8]{Zhongwei}.
\end{proof}

Let $A$ be an $n\times N$ matrix with rank $n$.
We say that  $A$ is weak full spark if its rank remains unchanged when any one of its columns is removed.

For the case of $N=n+1$, we show that almost phase retrievable frames are equivalent to weak full spark matrices.

\begin{Theorem} \label{thm:t1}
Let the hypothesis be as in Theorem~\ref{thm:almost PR frames}.  Then we have

\begin{enumerate}
\item If $A$ is almost phase retrievable, then $A$ is weak full spark.
\item Conversely, if $A$ is weak full spark and $N=n+1$, then $A$ is almost phase retrievable.
\end{enumerate}
\end{Theorem}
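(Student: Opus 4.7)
The plan is to invoke the characterizations in Theorem~\ref{thm:almost PR frames}, using item~(\ref{item:2}) for~(1) and item~(\ref{item:5}) for~(2). The hypothesis $N=n+1$ in~(2) collapses the matrix $M$ appearing in item~(\ref{item:5}) to a single row, so the rank condition there becomes almost immediate; the harder direction is~(1).

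For part~(1) I will argue the contrapositive, with $\varphi_1,\ldots,\varphi_N$ denoting the columns of $A$. If $A$ is not weak full spark, some column $\varphi_j$ satisfies $\varphi_j\notin\Lspan\{\varphi_i:\,i\ne j\}$, so there is a nonzero $y\in\bbR^n$ orthogonal to every $\varphi_i$ with $i\ne j$, and necessarily $\langle y,\varphi_j\rangle\ne 0$ (else $y$ would annihilate the whole frame). The key construction is the reflection
\[
  x' := x - \frac{2\langle x,\varphi_j\rangle}{\langle y,\varphi_j\rangle}\,y,
\]
which preserves $\langle\cdot,\varphi_i\rangle$ for $i\ne j$ and flips its sign at $i=j$, so $|\langle x,\varphi_i\rangle|=|\langle x',\varphi_i\rangle|$ for every $i$. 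A short calculation shows $x'=\pm x$ only when $x$ lies in the hyperplane $\{\langle\cdot,\varphi_j\rangle=0\}$ or on the line $\bbR y$; outside these two proper subspaces, $x$ and $x'$ yield identical unsigned samples yet are inequivalent. Since the complement of two proper subspaces of $\bbR^n$ cannot sit inside any finite union of proper subspaces, $A$ fails to be almost phase retrievable.

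For part~(2), since $N-n=1$ one has $\mathcal N(A)=\bbR v$ for some nonzero $v\in\bbR^N$, and weak full spark is equivalent to $v_i\ne 0$ for every $i$: the relation $Av=0$ gives $\varphi_j\in\Lspan\{\varphi_i:\,i\ne j\}$ exactly when $v_j\ne 0$, whereas $v_j=0$ forces $\{\varphi_i:\,i\ne j\}$ to be linearly dependent. The range of $A^*$ is the hyperplane $\{x\in\bbR^N:\,v^T x=0\}$, so one may take $M=v^T$ in item~(\ref{item:5}) of Theorem~\ref{thm:almost PR frames}. Then $MD_s=(s_1 v_1,\ldots,s_N v_N)$ has rank $1$, and for distinct $s,s'\in S_N$ the rows $MD_s$ and $MD_{s'}$ are non-proportional: proportionality would force $s_i/s'_i$ to be constant and, via $s_1/s'_1=1$, equal to $1$, contradicting $s\ne s'$. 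Hence the $2\times N$ stacked matrix has rank $2>1$, and item~(\ref{item:5}) delivers almost phase retrievability.

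The main subtlety is in~(1): the exceptional set in the definition of almost phase retrievability is allowed to be any finite union of proper subspaces, so exhibiting a single ambiguous pair is not enough. The reflection construction overcomes this by producing a whole family of ambiguities parametrized by the complement of two proper subspaces, which is dense and open and therefore cannot be covered by finitely many proper subspaces. Once this point is appreciated, the remainder of the argument reduces to routine linear algebra.
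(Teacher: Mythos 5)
Your argument is correct in both halves, but each takes a genuinely different route from the paper's. For part (1) the paper also argues by contraposition, but stays entirely on the coefficient side: if $\varphi_{i_0}\notin\Lspan\{\varphi_i:\,i\ne i_0\}$, then every relation $\sum_i c_i\varphi_i=0$ forces $c_{i_0}=0$, hence $\mathcal N(AD_s)=\mathcal N(AD_{s'})$ for the two sign patterns differing only in the $i_0$-th slot, and Theorem~\ref{thm:almost PR frames}(\ref{item:3}) finishes the job. Your reflection $x\mapsto x-\frac{2\langle x,\varphi_j\rangle}{\langle y,\varphi_j\rangle}\,y$ is the signal-side counterpart of the same obstruction (and not the use of item~(\ref{item:2}) you announce at the outset); it re-proves the relevant implication of Theorem~\ref{thm:almost PR frames} from scratch rather than citing it, at the cost of some duplication, but you are right to emphasize that a single ambiguous pair is not enough and that your construction produces ambiguities off every finite union of proper subspaces. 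For part (2) the paper again works with item~(\ref{item:3}): it splits the indices into $I_1=\{i:\,s_i=s'_i\}$ and its complement, extracts $\sum_{i\in I_1}c_is_i\varphi_i=\sum_{i\in I_2}c_is_i\varphi_i=0$ from a common null vector, and derives either a non-removable column or the bound $\rank A\le N-2=n-1$, contradicting the frame property. Your route through item~(\ref{item:5}) with $M=v^*$, where $\mathcal N(A)=\bbR v$, is cleaner and makes the role of $N=n+1$ transparent (weak full spark $\Leftrightarrow$ all $v_i\ne0$ $\Leftrightarrow$ the rows $MD_s$, $s\in S_N$, are pairwise non-proportional); the one trade-off is that the equivalence of (\ref{item:1}) and (\ref{item:5}) is precisely the item of Theorem~\ref{thm:almost PR frames} whose proof the paper outsources to a citation, whereas the paper's own argument for part (2) relies only on the items proved in-house.
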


\begin{proof}
(i).\,\, Denote the column vectors of $A$ by $\varphi_1$, $\ldots$, $\varphi_N$.
If $A$ is not weak full spark, then there is some $1\le i_0\le N$ such that $\varphi_{i_0}$  can not be written as a linear combination of
the others. Consequently, if
\[
  \sum_{i=1}^N c_i \varphi_i = 0,
\]
then we have $c_{i_0}=0$.
Let $s = (1,\ldots, 1)$. If $i_0=1$, then set $s'=(1,-1,\ldots,-1)$. Otherwise, set
$s'=(1,\ldots, -1,\ldots,1)$ (only the $i_0$-th entry is $-1$). In both cases, we have $s\ne s'$
and
$\mathcal N(AD_s) = \mathcal N(AD_{s'})$.
By Theorem~\ref{thm:almost PR frames}(\ref{item:3}),
$A$ is not almost phase retrievable, which contradicts with the hypothesis.
Hence $A$ is weak full spark.

(ii).\, Assume that $A$ is not  almost phase retrievable.
By Theorem~\ref{thm:almost PR frames}(\ref{item:3}), there exist some $s, s'\in S_N$ with $s\ne s'$ such that
$\mathcal N(AD_s) = \mathcal N(AD_{s'})$.

Let $I_1 = \{1\le i\le N: s_i = s'_i\}$
and $I_2$ be the complement of $I_1$.
Let $c = (c_1, \ldots, c_N)^* \in \bbR^N$ be such that
\begin{equation}\label{eq:e2}
   AD_s c = 0.
\end{equation}
Then we have $AD_{s'}c=0$. Hence
\begin{equation}\label{eq:e3}
  \sum_{i\in I_1} c_i s_i\varphi_i  = \sum_{i\in I_2} c_i s_i\varphi_i = 0.
\end{equation}
There are two cases.

(a) For  $l=1$ or $l=2$, the solution of (\ref{eq:e2}) satisfies that
\[
    c_i=0,\qquad i\in I_l.
\]

In this case, for $i\in I_l$, $\varphi_i$ can not be written as a linear combination of other vectors
which is impossible since $A$ is weak full spark.

(b) There is a solution of (\ref{eq:e2}) such that neither $\{c_i:\, i\in I_1\}$
nor $\{c_i:\, i\in I_2\}$ is a sequence of zeros.

In this case, we see from (\ref{eq:e3}) that
\[
  \rank(\{\varphi_i:\, i\in I_1\}) \le \#I_1 -1
  \quad \mathrm{and}\quad
  \rank(\{\varphi_i:\, i\in I_2\}) \le \#I_2 -1.
\]
Hence
\[
  \rank(\{\varphi_i:\, 1\le i\le N\}) \le \#I_1+\#I_2-2 = N-2 = n-1,
\]
which is possible since $\{\varphi_i:\, 1\le i\le N\}$ is a frame for $\bbR^n$.
\end{proof}

Next we consider the reconstruction of functions in spline spaces from phaseless sampled values.
We begin with some results on local sampling in spline spaces.

\begin{Definition}
We call $E=\{x^{}_k:\,1\le k\le K\}$
  a (local) sampling sequence for $\VN$
if $E\subset [N_1, N_2]$ and there is a sequence of functions $\{S_k:\,1\le k\le K\}$ such that
\[
     f(x) = \sum_{k=1}^{K} f(x^{}_k) S_k(x), \qquad \forall f\in V_m,\,\,  x\in [N_1, N_2].
\]
\end{Definition}

Based  on the celebrated Sch\"onberg-Whitney Theorem\cite{Sch},
the following characterization of local sampling sequences for  spline spaces was proved in \cite{SunZhou09}.

\begin{Proposition} \label{prop:local sampling}
A sequence $E\subset [N_1, N_2]$ of distinct points  is a sampling sequence for $\VN$ if
and only if it satisfies the following conditions,
\begin{eqnarray}
&& \#E \ge N_2-N_1+m, \label{eq:c0}\\
&&  \#\big(E \cap [N_1, N_1+k)\big) \ge k, \quad 0\le k\le N_2-N_1, \label{eq:c1}\\
&&  \#\big(E \cap (N_2-k, N_2]\big) \ge k, \quad 0\le k\le N_2-N_1,\label{eq:c2}\\
&& \#\big(E \cap (n_1, n_2)\big) \ge n_2-n_1-m, \quad N_1\le n_1 < n_2 \le N_2.\label{eq:c3}
\end{eqnarray}
\end{Proposition}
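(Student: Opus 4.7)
The plan is to reduce the sampling criterion to a rank question for a B-spline collocation matrix, invoke the classical Sch\"onberg–Whitney theorem, and then translate the resulting interlacing condition into $(\ref{eq:c0})$–$(\ref{eq:c3})$ via Hall's marriage theorem.

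First, I would set up the linear algebra. Writing $K:=N_2-N_1+m$ and $B_j:=\varphi_m(\cdot-(N_1-m-1+j))|_{[N_1,N_2]}$ for $1\le j\le K$, the set $\{B_j\}_{j=1}^{K}$ is a basis for $\VN$, and the leftmost and rightmost knots of $B_j$ are $N_1-m-1+j$ and $N_1+j$. Set $L=\#E$. The sequence $E\subset [N_1,N_2]$ is a sampling sequence for $\VN$ precisely when the evaluation map $f\mapsto (f(x))_{x\in E}$ is injective on $\VN$, i.e., when the $L\times K$ collocation matrix $A=(B_j(x))_{x\in E,\,1\le j\le K}$ has full column rank $K$. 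By Cauchy–Binet, this is equivalent to the existence of a nonsingular $K\times K$ submatrix of $A$, hence to the existence of increasing points $y_1<\cdots<y_K$ in $E$ for which $(B_j(y_i))_{i,j=1}^K$ is invertible.

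Next, I would invoke the Sch\"onberg–Whitney theorem, which asserts that $(B_j(y_i))_{i,j=1}^{K}$ is invertible iff $y_j\in (N_1-m-1+j,\, N_1+j)$ for every $j$. Intersecting with $[N_1,N_2]$, $E$ is therefore a sampling sequence iff one can select an increasing sequence $y_1<\cdots<y_K$ in $E$ with $y_j\in I_j:=(N_1-m-1+j,\, N_1+j)\cap [N_1,N_2]$. Since both endpoints of $I_j$ are nondecreasing in $j$, a standard application of Hall's marriage theorem (in its specialization to interval systems with the consecutive-ones property) reduces the matching criterion to the single family of inequalities
\[
\#\bigl(E\cap (I_a\cup\cdots\cup I_b)\bigr)\ge b-a+1, \qquad 1\le a\le b\le K,
\]
and each such union equals the interval $(N_1-m-1+a,\, N_1+b)\cap [N_1,N_2]$. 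Grouping these blocks according to whether the union reaches the left endpoint $N_1$, reaches the right endpoint $N_2$, or lies strictly inside $(N_1,N_2)$, and reparametrizing via $k=b$, $k=K-a+1$, or $(n_1,n_2)=(N_1-m-1+a,\, N_1+b)$, respectively, produces exactly conditions $(\ref{eq:c1})$, $(\ref{eq:c2})$ and $(\ref{eq:c3})$; the maximal block $\{1,\ldots,K\}$ gives $(\ref{eq:c0})$.

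The main obstacle is the bookkeeping in this final step: I must check that the three overlapping families of consecutive blocks cover all the essential Hall constraints with no gaps and no hidden redundancies, and handle the boundary cases carefully (for instance $n_2-n_1\le m$ renders $(\ref{eq:c3})$ vacuous, corresponding to blocks whose union contains no fully interior $B_j$, and the fully saturated block must be reconciled with $(\ref{eq:c0})$). Once this combinatorial translation is carried out, the two implications of the proposition follow immediately in opposite directions from the equivalence established in Steps 1 and 2.
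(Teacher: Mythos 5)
Your proposal is sound: the paper does not prove Proposition~\ref{prop:local sampling} itself but quotes it from the cited reference, noting only that it rests on the Sch\"onberg--Whitney theorem, and your route --- reducing the sampling property to full column rank of the collocation matrix, applying Sch\"onberg--Whitney to turn that into the existence of an interlacing selection $y_j\in(N_1-m-1+j,\,N_1+j)\cap[N_1,N_2]$, and then using Hall's theorem for interval systems to convert the selection problem into the counting conditions (\ref{eq:c0})--(\ref{eq:c3}) --- is the natural realization of that outline, and the block-to-inequality bookkeeping does check out (the consecutive blocks touching $N_1$, touching $N_2$, interior, and saturated yield exactly (\ref{eq:c1}), (\ref{eq:c2}), (\ref{eq:c3}) and (\ref{eq:c0}), with the remaining blocks giving only weaker, implied inequalities). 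The only steps you assert rather than prove --- that a system of distinct representatives for intervals with nondecreasing endpoints can be reordered increasingly, and that Hall's condition for such systems need only be checked on consecutive index blocks --- are both standard and correct.
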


Applying Theorem~\ref{thm:almost PR frames} to the local phaseless sampling in spline spaces,
we get the following characterization of almost phaseless sampling sequences.

\begin{Lemma} \label{Lm:L0}
Let $E=\{x_i:\,1\le i\le N\}\subset [N_1, N_2]$ be a sequence of distinct numbers
and $K:=N_2-N_1+m\le N$.
Define the  $K\times N$ matrix $\Phi$ by
\begin{equation}\label{eq:Phi}
   \Phi = \Big[\varphi_m(x_j-n)\Big]_{N_1-m\le n\le N_2-1, 1\le j\le N}.
\end{equation}
Then the following items are equivalent.
\begin{enumerate}
\item $E$ is an almost phaseless sampling sequence for $\VN$.

\item
$\{D_s\Phi^*\bbR^K:\, s\in P_N \}$ consists of distinct  $K$-dimensional subspaces.

\item
$\{\mathcal N(\Phi  D_s): s\in\! P_N\! \}$ consists of distinct  $(N-K)$-dimensional subspaces.
\end{enumerate}
\end{Lemma}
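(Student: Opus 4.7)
The plan is to translate the spline sampling problem into the linear-algebraic phase retrieval setting of Theorem~\ref{thm:almost PR frames} via the natural B-spline basis.

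First, I would use the basis $\{\varphi_m(\cdot-n)|_{[N_1,N_2]}:N_1-m\le n\le N_2-1\}$ to identify $\VN$ with $\bbR^K$, where $K=N_2-N_1+m$. Any $f\in\VN$ is uniquely written as $f=\sum_n c_n\varphi_m(\cdot-n)|_{[N_1,N_2]}$ for some $c\in\bbR^K$. For each sample point $x_j\in E$, a direct computation gives $f(x_j)=(\Phi^*c)_j=\langle c,\varphi_j\rangle$, where $\varphi_j$ denotes the $j$-th column of $\Phi$. Consequently $|f(x_j)|=|\langle c,\varphi_j\rangle|$, and recovering $f$ up to a sign from $\{|f(x_j)|\}$ is literally the same as recovering $c$ up to a sign from $\{|\langle c,\varphi_j\rangle|\}$. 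Since the identification is linear, finite unions of proper subspaces of $\VN$ correspond bijectively to finite unions of proper subspaces of $\bbR^K$, so the exceptional sets in the two definitions of ``almost'' match up.

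Next, I would handle the rank of $\Phi$ so that Theorem~\ref{thm:almost PR frames} can be applied. If $\rank\Phi<K$, pick a nonzero $c_0\in\mathcal N(\Phi^*)$; then the corresponding $f_0\in\VN\setminus\{0\}$ satisfies $f_0(x_j)=0$ for all $j$, so for every $g\in\VN$ the functions $g$ and $g+f_0$ have identical unsigned samples yet are generically not equal up to a sign. Since the set of such $g$ is not contained in any finite union of proper subspaces, (i) fails. At the same time $\dim(D_s\Phi^*\bbR^K)=\rank\Phi<K$, so (ii) fails because the claimed subspaces are not even $K$-dimensional, and likewise $\dim\mathcal N(\Phi D_s)>N-K$, so (iii) fails. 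Hence in the rank-deficient case all three statements are false and the equivalence is vacuous. In the remaining case $\rank\Phi=K$ the columns of $\Phi$ form a frame for $\bbR^K$, and the equivalence of (i), (ii), (iii) is exactly the equivalence of items (1), (2), (3) in Theorem~\ref{thm:almost PR frames} applied with $A=\Phi$ and $n=K$.

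The main obstacle I anticipate is the bookkeeping of exceptional sets: I must verify that a finite union of proper subspaces of $\bbR^K$ produced by Theorem~\ref{thm:almost PR frames} pulls back through the basis isomorphism to a finite union of proper subspaces of $\VN$, and conversely. This is straightforward once set up, and the rank dichotomy above is the only other subtlety; the rest of the argument is purely a change of coordinates into the frame-theoretic language in which Theorem~\ref{thm:almost PR frames} is already proved.
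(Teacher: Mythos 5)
Your proposal is correct and follows essentially the same route as the paper: identify $\VN$ with $\bbR^K$ via the B-spline basis so that sampled values become frame coefficients against the columns of $\Phi$, dispose of the rank-deficient case with a nonzero $f_0$ vanishing on $E$, and then invoke Theorem~\ref{thm:almost PR frames}. Your explicit observation that all three statements fail simultaneously when $\rank\Phi<K$ is a slightly tidier way of packaging what the paper does inside its (i)$\Rightarrow$(ii) argument, but it is the same proof.
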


\begin{proof}
The equivalence of (ii) and (iii) is obvious. We only need to show that (i) and (ii) are equivalent.

(i) $\Rightarrow$ (ii).
Let $E$  be an almost phaseless sampling sequence.
First, we show that $\rank(\Phi) = K$. Assume on the contrary that
\[
  \rank(\Phi) < K.
\]
Then we can find some $c\in R^K\setminus\{0\}$ such that $\Phi^* c=0$.
Let
\[
  f_0(x) = \sum_{n=N_1-m}^{N_2-1} c_n \varphi_m(x-n).
\]
Then we have $f_0\ne 0$ and $f_0(x_i)=0$ for $1\le i\le N$. Consequently, for $f\ne -(1/2) f_0$,
we have $f \ne \pm (f+f_0)$ but $|f(x_i)| = |f(x_i) + f_0(x_i)|$. Hence
we can not recover $f$ from  intensity measurements, which contradicts with the assumption.

Set $F = (f(x_1),\ldots, f(x_N))^*$
and $c = (c_{N_1-m}, \ldots, c_{N_2-1})^*$.
Then we have
\[
  F = \Phi^* c.
\]
Since $E$ is an almost phaseless sampling sequence and
$\{\varphi_m(\cdot-n)|_{[N_1,N_2]}:\,N_1-m\le n\le N_2-1\}$ is a basis for $\VN$,
column vectors of $\Phi$ form an almost phase retrievable frame for $\bbR^K$.
Now the conclusion follows from Theorem~\ref{thm:almost PR frames}.

(ii) $\Rightarrow$ (i) can be proved similarly.
\end{proof}

\begin{Lemma} \label{Lm:L1}
An almost phaseless sampling sequence for $\VN$ is always a sampling sequence for $\VN$.
\end{Lemma}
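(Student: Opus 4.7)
The plan is to extract the rank property of the sampling matrix from the almost-phaseless hypothesis and then write down a direct reconstruction formula, so that almost phaseless sampling collapses to ordinary sampling once the rank is known. Let $E$ be an almost phaseless sampling sequence for $\VN$ and let $\Phi$ be the $K\times N$ matrix of \eqref{eq:Phi}, with $K=N_2-N_1+m=\dim\VN$.

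First I would show that $\rank(\Phi)=K$. This is exactly the first step inside the proof of Lemma~\ref{Lm:L0}: if $\rank(\Phi)<K$, there exists $c\in\bbR^K\setminus\{0\}$ with $\Phi^*c=0$, giving a nonzero $f_0\in\VN$ vanishing on $E$; then every $f\in\VN$ with $f\ne -\tfrac12 f_0$ satisfies $|f(x_i)|=|(f+f_0)(x_i)|$ while $f\ne\pm(f+f_0)$, and this bad set is not contained in any finite union of proper subspaces of $\VN$, contradicting the definition. So one can either quote Lemma~\ref{Lm:L0} or simply repeat this short argument.

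With $\rank(\Phi)=K$, the Gram matrix $\Phi\Phi^*$ is invertible. Writing any $f\in\VN$ as $f(x)=\sum_{n=N_1-m}^{N_2-1}c_n\varphi_m(x-n)$ and setting $F=(f(x_1),\dots,f(x_N))^*$, the relation $F=\Phi^*c$ gives $c=(\Phi\Phi^*)^{-1}\Phi F$. Letting $B=(\Phi\Phi^*)^{-1}\Phi$ and defining
\[
   S_k(x)=\sum_{n=N_1-m}^{N_2-1}B_{n,k}\,\varphi_m(x-n),\qquad 1\le k\le N,
\]
I get the reconstruction identity $f(x)=\sum_{k=1}^{N}f(x_k)S_k(x)$ on $[N_1,N_2]$, which by definition says that $E$ is a sampling sequence for $\VN$.

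There is no real obstacle here: the entire content of the lemma is the rank step, and both the verification of $\rank(\Phi)=K$ and the pseudo-inverse construction of $\{S_k\}$ are essentially linear algebra once Lemma~\ref{Lm:L0} is in hand. The only thing to be careful about is to justify the rank claim intrinsically from the definition of almost phase retrieval (noting that the exceptional set in the definition has measure zero, whereas the set of counterexamples forced by a rank deficiency is the complement of a single hyperplane and therefore far too large to be swallowed by finitely many proper subspaces).
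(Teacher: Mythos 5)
Your proposal is correct and follows essentially the same route as the paper: establish $\rank(\Phi)=N_2-N_1+m$ via the rank step already contained in the proof of Lemma~\ref{Lm:L0}, then invert the Gram matrix $\Phi\Phi^*$ to obtain $c=(\Phi\Phi^*)^{-1}\Phi F$ and read off the reconstruction functions $S_k$. Your added remark that the exceptional set forced by a rank deficiency is too large to be absorbed into finitely many proper subspaces is a useful explicit justification of a point the paper leaves implicit.
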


\begin{proof}
Let  $E=\{x_i:\,1\le i\le N\}$
be an almost phaseless sampling sequence for $\VN$.
Define $\Phi$   by (\ref{eq:Phi}).
Set $F = (f(x_1),\ldots, f(x_N))^*$
and $c = (c_{N_1-m}, \ldots, c_{N_2-1})^*$.
Then we have
\[
  F = \Phi^* c.
\]
We see from the proof of Lemma~\ref{Lm:L0} that
$\rank(\Phi) = N_2-N_1+m$. Hence $\Phi^*\Phi$ is invertible. Therefore,
\[
  c = (\Phi\Phi^*)^{-1}\Phi F.
\]
It follows that
\[
  f(x) = \sum_{n=N_1-m}^{N_2-1} c_n \varphi_m(x-n)
       = \sum_{k=i}^N f(x_i)  S_i(x), \qquad x\in [N_1, N_2],
\]
where $S_i$ is a linear combination of $\{\varphi_m(x-n):\, N_1-m\le n\le N_2-1\}$.
In other words, $E$ is a local sampling sequence for $\VN$.
\end{proof}

With the above lemma, we  get the minimum cardinality of almost phaseless sampling sequences on $[N_1, N_2]$.

\begin{Corollary} \label{Co:c1}
Suppose that $E$ is  an almost phaseless sampling sequence for $\VN$. Then we have
$\#E \ge N_2-N_1+m+1$.
\end{Corollary}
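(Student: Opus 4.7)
The plan is to combine Lemma~\ref{Lm:L1} with Lemma~\ref{Lm:L0}. Lemma~\ref{Lm:L1} tells us that $E$ is in particular a local sampling sequence for $\VN$, so Proposition~\ref{prop:local sampling} immediately gives $\#E \ge N_2-N_1+m$. The only thing left to rule out is the equality case; I would argue this by contradiction.

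Suppose $\#E = K$, where $K := N_2-N_1+m$. Then the matrix $\Phi$ defined in (\ref{eq:Phi}) is square of size $K\times K$. From the first step of the proof of Lemma~\ref{Lm:L0}, we already know $\rank(\Phi) = K$, so $\Phi$ is invertible. For every sign pattern $s \in S_N = S_K$ the matrix $D_s$ is invertible, so $\Phi D_s$ is invertible, and consequently $\mathcal{N}(\Phi D_s) = \{0\}$ for every $s$. Thus the family $\{\mathcal{N}(\Phi D_s):\, s\in S_K\}$ collapses to the single subspace $\{0\}$, violating Lemma~\ref{Lm:L0}(iii), which demands that these null spaces be pairwise distinct.

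To make this contradiction meaningful, I need to check that $S_K$ genuinely contains more than one element. This follows since $N_2-N_1\ge 1$ and $m\ge 1$ force $K\ge 2$, so $|S_K| = 2^{K-1} \ge 2$; in particular $s=(1,1,\ldots,1)$ and $s'=(1,-1,\ldots,-1)$ are two distinct elements of $S_K$ whose null spaces under $\Phi$ would have to differ, contradicting what we derived. Therefore $\#E > K$, which is the claim.

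There is really no major obstacle here: once Lemmas~\ref{Lm:L0} and \ref{Lm:L1} are in place, the argument is a one-line linear algebra observation (an invertible square matrix has trivial null space, so sign twists cannot produce distinct null spaces). The only bookkeeping is to confirm that the characterization in Lemma~\ref{Lm:L0}(iii) genuinely compares at least two distinct sign patterns, which is ensured by the lower bound $K\ge 2$.
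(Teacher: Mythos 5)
Your proposal is correct and follows essentially the same route as the paper: Lemma~\ref{Lm:L1} plus Proposition~\ref{prop:local sampling} give the bound $\#E\ge N_2-N_1+m$, and in the equality case the square matrix $\Phi$ is invertible, so all the null spaces $\mathcal N(\Phi D_s)$ are trivial and cannot be distinct, contradicting Lemma~\ref{Lm:L0}. Your extra check that $S_K$ contains at least two sign patterns is a harmless bit of added care that the paper leaves implicit.
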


\begin{proof}
By Lemma~\ref{Lm:L1}, $E$ is a sampling sequence for
$\VN$.
Hence  $\# E \ge N_2-N_1+m$.
If $\# E =  N_2-N_1+m$, then
the matrix $\Phi$ defined in (\ref{eq:Phi}) is an $(N_2-N_1+m) \times (N_2-N_1+m)$ invertible matrix.
By Lemma~\ref{Lm:L0}, $E$ is not a phaseless sampling sequence, which contradicts with
the hypothesis. This completes the proof.
\end{proof}

Next we show that every almost phaseless sampling sequence on $[N_1, N_2]$ contains a subsequence whose cardinality equals to the minimum $N_2-N_1+m+1$.
%The proof is similar to that of \cite[Lemma 1]{SunZhou09} for $m\ge 2$. For the paper to be more readable, we include a proof here.

\begin{Lemma}\label{Lm:L2}
Let $E \subset [N_1, N_2]$ be sequence of distinct numbers which meets (\ref{eq:c0a})--(\ref{eq:c3a}).
Then there is a subsequence $E'\subset E$ such that $ \# E'= N_2-N_1+m+1$ and $E'$
meets (\ref{eq:c1a})--(\ref{eq:c3a}).
\end{Lemma}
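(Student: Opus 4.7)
The plan is to induct on $\Delta := \#E - (N_2 - N_1 + m + 1)$, which is nonnegative by hypothesis (\ref{eq:c0a}). When $\Delta = 0$ set $E' = E$; there is nothing more to check. For $\Delta \ge 1$ it suffices to produce a single $x^* \in E$ such that $E \setminus \{x^*\}$ still satisfies (\ref{eq:c1a})--(\ref{eq:c3a}); the inductive hypothesis applied to $E \setminus \{x^*\}$ (which has excess $\Delta - 1$ and still satisfies (\ref{eq:c0a}) since $\#E \ge N_2 - N_1 + m + 2$) then furnishes the desired subsequence.

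Call any interval on the left-hand side of (\ref{eq:c1a})--(\ref{eq:c3a}) \emph{tight} (with respect to $E$) when the corresponding inequality is saturated. A point $x \in E$ is removable precisely when $x$ avoids every tight interval, so the goal reduces to finding such an $x$. Assume, for a contradiction, that every $x \in E$ lies in some tight interval. Let $a$ be the largest $k \in \{0, 1, \ldots, N_2 - N_1\}$ with $[N_1, N_1 + k)$ tight (take $a = 0$ if none), and define $b$ analogously for the tight suffix intervals. By maximality of $a$ and $b$, no point of $E \cap [N_1 + a, N_2 - b]$ lies in any tight prefix or tight suffix, so all such middle points must belong to tight intervals of type (\ref{eq:c3a}).

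I then split into two cases. If $a + b > N_2 - N_1$, then $[N_1, N_1 + a) \cup (N_2 - b, N_2]$ already covers $[N_1, N_2]$; applying inclusion--exclusion and the lower bound on $\#(E \cap (N_2 - b, N_1 + a))$ coming from (\ref{eq:c3a}) (or the trivial $\ge 0$ bound when the overlap has length less than $m$) yields $\#E \le N_2 - N_1 + m + 1$, contradicting $\Delta \ge 1$. If $a + b \le N_2 - N_1$, the three sets $[N_1, N_1 + a)$, $[N_1 + a, N_2 - b]$, $(N_2 - b, N_2]$ partition $[N_1, N_2]$. A short inclusion--exclusion applied to two overlapping tight type-(\ref{eq:c3a}) intervals, together with the lower bounds on the counts in their union and intersection, forces equality throughout, so the union (as an open interval) is itself tight. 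Hence the tight type-(\ref{eq:c3a}) intervals covering $E \cap [N_1 + a, N_2 - b]$ collapse to pairwise disjoint maximal tight intervals $(p_1, q_1), \ldots, (p_r, q_r)$, for which $\sum_i \#(E \cap (p_i, q_i)) = \sum_i (q_i - p_i - m + 1)$. Writing $\#E = (a+1) + \#(E \cap [N_1+a, N_2-b]) + (b+1)$ and bounding the middle term via this disjoint decomposition produces once more $\#E \le N_2 - N_1 + m + 1$, the desired contradiction.

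The main obstacle is the bookkeeping in the second case. The tight type-(\ref{eq:c3a}) intervals are open with integer endpoints and can extend across $N_1 + a$ or $N_2 - b$ into the tight prefix or suffix, so the disjointified family $(p_i, q_i)$ need not respect the three-way partition. Verifying that every $E$-point in the middle region is in fact covered by some $(p_i, q_i)$, and that integer endpoints of these intervals which happen to belong to $E$ are absorbed into a neighboring tight interval rather than double-counted or left uncovered, requires a careful case analysis of how tight intervals of the three types interact at integer points.
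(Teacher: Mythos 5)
Your plan is correct in outline and takes a genuinely different route from the paper. The paper splits into $m=1$ (handled by an explicit direct construction of $E'$) and $m\ge 2$, where it introduces the slack function $l_k=\#(E\cap[N_1,N_1+k))-k-1$, locates the leftmost index $k_0$ from which all prefix slacks are positive, deletes a point of $E\cap(N_1+k_0-1,N_1+k_0)$, and verifies (\ref{eq:c1a})--(\ref{eq:c3a}) for $E\setminus\{y'\}$ by a chain of direct inequalities (this verification uses $m\ge 2$ in its Case 3). Your argument is the linear-programming-style one: remove any point lying in no tight constraint, and show such a point exists whenever $\#E>N_2-N_1+m+1$. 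It is uniform in $m$ and, I think, more transparent. The ``main obstacle'' you flag in your last paragraph is in fact not an obstacle: the same inclusion--exclusion computation that merges two overlapping tight intervals of type (\ref{eq:c3a}) also merges a tight $(n_1,n_2)$ with a tight prefix $[N_1,N_1+k)$ (or suffix) into a tight prefix $[N_1,n_2)$ --- one only has to replace the lower bound on the overlap by $\max\bigl(0,\,\cdot\,\bigr)$ when the overlap is shorter than $m$. Consequently, after passing to maximal merged tight regions, the remaining type-(\ref{eq:c3a}) regions $(p_i,q_i)$ are automatically confined to $[N_1+a,N_2-b]$, no tight interval crosses a region boundary, and a point of $E$ sitting at a shared endpoint $q_i=p_{i+1}$ (or at $N_1+a$, $N_2-b$) would force a further merge contradicting maximality. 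The count then closes cleanly: $\#E\le(a+1)+(b+1)+\sum_i(q_i-p_i-m+1)\le N_2-N_1+2-r(m-1)\le N_2-N_1+m+1$, contradicting $\Delta\ge 1$; the overlapping case $a+b>N_2-N_1$ gives the same bound as you indicate. So the case analysis you defer does go through, and I would regard your argument as complete once that cross-type merging observation is written out; it trades the paper's concrete but somewhat intricate inequality-checking (and its separate $m=1$ construction) for a single structural covering argument.
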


\begin{proof}
First, we consider the case of $m=1$.  We see from (\ref{eq:c1a})--(\ref{eq:c3a}) that
\begin{eqnarray*}
 && \#(E\cap [N_1, N_1+1)) \ge 2 \\
 && \#(E\cap (N_2-1, N_2]) \ge 2, \\
 &&\#(E\cap (n, n+1)) \ge 1, \quad N_1\le n\le N_2-1.
\end{eqnarray*}
Hence there is some $E'\subset E$   such that
\begin{eqnarray*}
 && \#(E'\cap [N_1, N_1+1)) = 2 \\
 && \#(E'\cap (N_2-1, N_2]) =2, \\
 &&\#(E'\cap (n, n+1)) = 1, \quad N_1< n< N_2-1.
\end{eqnarray*}
Now we get a subsequence $E'$ as desired.

Next we consider the case of $m\ge 2$. Set $K=\#E$.
Assume that  $K >N_2-N_1+m+1$.
Let
\begin{eqnarray*}
  l_k &=& \#\big(E \cap [N_1, N_1+k)\big) - k-1, \qquad 1\le k \le  N_2-N_1.
\end{eqnarray*}
Then we see from (\ref{eq:c1a})  that
\begin{equation}\label{eq:ab}
l_k \ge 0,   \qquad 1\le k\le N_2-N_1.
\end{equation}
Set
\begin{eqnarray}
k_0 &=& \min\{k\in [1, N_2-N_1]:\,
         l_i\ge 1, k\le i\le N_2-N_1\}. \label{eq:ka}
\end{eqnarray}
We conclude that
\begin{equation}\label{eq:k12}
\#\big(E \cap ( N_2-k, N_2]\big)
   \ge k+2, \quad  N_2-N_1-k_0+1 \le k\le N_2-N_1.
\end{equation}

Note that
\[
  l_{N_2-N_1}  \ge K-1 - (N_2-N_1) -1 \ge m \ge 1.
\]
We have $1\le k_0 \le N_2-N_1$.
Since
\[
  \#(E\cap (N_1, N_2]) \ge K-1 \ge N_2-N_1+m+1,
\]
(\ref{eq:k12}) is true if $k_0=1$.

Next we suppose that $k_0>1$.
We see from (\ref{eq:ka}) that  $l_{k_0-1} =0$. Hence
\[
     \#\big(E \cap [N_1, N_1+k_0 - 1)\big) = k_0.
\]
Therefore,
\begin{equation}\label{eq:ea1}
     \#\big(E \cap [ N_1+k_0 - 1, N_2]\big) = K-k_0.
\end{equation}
For $N_2-N_1-k_0+1< k \le N_2-N_1$, we see from (\ref{eq:c3a}) and (\ref{eq:ea1}) that
\begin{eqnarray*}
   &&  \#\big(E \cap ( N_2-k, N_2]\big) \\
&=& \#\big(E \cap ( N_2-k,  N_1+k_0-1)\big)
     + \#\big(E \cap [ N_1+k_0 - 1, N_2]\big) \\
&\ge& (N_1-N_2+k+k_0-m) +  (K-k_0) \\
&=& K-(N_2-N_1+m)+k \\
&\ge& k+2.
\end{eqnarray*}
And for $k=N_2-N_1-k_0+1$, we see from (\ref{eq:ea1}) that
\begin{eqnarray*}
 \#\big(E \cap ( N_2-k, N_2]\big)
   &\ge& K-k_0 -1
             = K-N_2+N_1-2+k \ge k+ m \\
   &\ge& k+2.
\end{eqnarray*}
Hence (\ref{eq:k12}) is also true.

Next we show that
\begin{equation}\label{eq:k3}
\#\big(E \cap (N_1+k_0 - 1, N_1+k_0)\big)\ge 1.
\end{equation}

If $k_0=1$, then (\ref{eq:k3}) follows from (\ref{eq:c1a}).
For the case of $k_0>1$,  we see from (\ref{eq:ka}) that $l_{k_0-1}=0$ and $l_k\ge 1$ for $k_0\le k\le N_2-N_1$.
Hence
\begin{eqnarray}
&&    \#\big(E \cap [ N_1+k_0 - 1, N_1+k)\big) \nonumber\\
&=& \#\big(E \cap [ N_1, N_1+k)\big) - \#\big(E \cap [ N_1, N_1+k_0-1)\big)\nonumber \\
&\ge& k+2 - k_0, \qquad k_0\le k\le N_2-N_1.\label{eq:numka}
\end{eqnarray}
By  setting $k=k_0$, we get
\[
\#\big(E \cap [ N_1+k_0 - 1, N_1+k_0)\big)\ge 2.
\]
Hence (\ref{eq:k3}) is true.

Take some $y'\in E \cap ( N_1+k_0 - 1, N_1+k_0)$ and let
$E' = E\setminus\{y'\}$. Then $E'$ meets (\ref{eq:c0a}) -- (\ref{eq:c3a}).

Since $\# E' = K-1 \ge N_2-N_1+m+1$, we need only to show that $E'$
 satisfies (\ref{eq:c1a})-(\ref{eq:c3a}).

For  $1\le k\le k_0-1$, we have
\[
\#\big(E' \cap [ N_1, N_1+k)\big) = \#\big(E \cap [ N_1, N_1+k)\big) \ge k + 1.
\]
And for $k_0 \le k\le N_2-N_1$,
\[
\#\big(E' \cap [ N_1, N_1+k)\big) = \#\big(E \cap [ N_1, N_1+k)\big)-1 = l_k+k \ge k+1.
\]
Hence  (\ref{eq:c1a}) is true.

On the other hand,  for $1\le k\le N_2-N_1+k_0$, we have
\begin{eqnarray*}
\#\big(E' \cap ( N_2-k, N_2]\big)
&=& \#\big(E \cap ( N_2-k, N_2]\big) \\
&\ge& k+1.
\end{eqnarray*}
And for  $N_2-N_1+k_0+1\le k\le N_2-N_1$, we see from (\ref{eq:k12}) that
\begin{eqnarray*}
\#\big(E' \cap ( N_2-k, N_2]\big)
&=& \#\big(E \cap ( N_2-k, N_2]\big)-1  \ge k+1.
\end{eqnarray*}
Hence  (\ref{eq:c2a}) is true.

Now it remains to prove that $E'$ meets (\ref{eq:c3a}).
There are three cases.

Case 1. $n_1\ge N_1+k_0$ or $n_2 \le N_1+k_0-1$. In this case, we have
\[
\#\big(E' \cap ( n_1, n_2)\big) = \#\big(E \cap ( n_1, n_2)\big) \ge n_2-n_1-m+1.
\]

Case 2. $n_1 <N_1+k_0-1$ and $n_2 \ge N_1+k_0$.

We see from (\ref{eq:numka}) that
for $k_0\le k\le N_2-N_1$,
\[
\#\big(E' \cap [N_1+k_0-1, N_1+k)\big)
=\#\big(E \cap [N_1+k_0-1, N_1+k)\big)-1
\ge k-k_0+1.
\]
Hence,  for $n_1 <N_1+k_0-1$ and $n_2 \ge N_1+k_0$,
\begin{eqnarray*}
    \#\big(E' \cap ( n_1, n_2)\big)
\!\!&=& \!\!\#\big(E' \cap ( n_1, N_1+k_0-1)\big)
    + \#\big(E' \cap [N_1+k_0-1, n_2)\big) \\
&\ge&\!\! (N_1+k_0-1-n_1-m+1) + (n_2 - N_1-k_0+1) \\
&=&\!\! n_2-n_1-m+1.
\end{eqnarray*}

Case 3. $n_1 =N_1+k_0-1$ and $n_2 \ge N_1+k_0$.
By (\ref{eq:numka}), we have
\[
\#\big(E' \cap (n_1, n_2)\big)
 = \#\big(E \cap (n_1, n_2)\big)-1
\ge n_2 - n_1 -1 \ge n_2-n_1-m+1.
\]
In all three cases, we show that $E'$ meets (\ref{eq:c3a}).

Repeating the previous arguments again and again, we   get some $E'\subset E$
such that $\# E' = N_2-N_1+m+1$ and
$E'$ meets (\ref{eq:c1a}) -- (\ref{eq:c3a}).
This completes the proof.
\end{proof}

We are now ready to give a proof for Theorem~\ref{thm:almost}.

\begin{proof}[Proof of Theorem~\ref{thm:almost}]
Denote $E=\{x_i:\, 1\le i\le N\}$.
Let $\Phi$ be defined by (\ref{eq:Phi}) and denote its column vectors by  $\Phi_1$, $\ldots$, $\Phi_N$.

First,  we prove the necessity.
Assume that $E$ is an almost phaseless sampling sequence.
By  Corallory~\ref{Co:c1},  $\#E \ge N_2-N_1+m+1$.

We see from  Proposition~\ref{prop:local sampling} and Lemma~\ref{Lm:L1} that
(\ref{eq:c1})-(\ref{eq:c3}) are true.
If (\ref{eq:c1a}) or (\ref{eq:c2a}) is false, then we have
\[
  \#(E\cap[N_1, N_1+k)) = k\quad \mathrm{or}\quad
  \#(E\cap(N_2-k, N_2]) = k.
\]
If $\#(E\cap(n_1,n_2)) \le n_2-n_1-m$ for some $n_1<n_2$, then  $n_2-n_1\ge m$.
We conclude that $E\cap (n_1,n_2) \ne  \emptyset$. Otherwise,
the matrix $\Phi^*$ has the following form,
\[
  \Phi^* =
  \begin{pmatrix}
     A_1   & 0 \\
     0     & A_2
  \end{pmatrix},
\]
where $A_1$ is a $k_1\times (n_1-N_1+m)$ matrix,
$A_2$ is a $k_2\times (N_2-n_1)$ matrix, and $k_1+k_2=N$.
Hence there exist some $s\ne s'$ such that $D_s\Phi^*\bbR^{N_2-N_1+m} = D_{s'}\Phi^*\bbR^{N_2-N_1+m}$.
By Lemma~\ref{Lm:L0}, $E$ is not an almost phaseless sampling sequence, which contradicts with the assumption.
Hence $\#(E\cap(n_1,n_2)) \ge 1$.

It follows from the above arguments that if  $E$ does not meet one of (\ref{eq:c1a})-(\ref{eq:c3a}),
then there is some element in $E$, say $x_N$, such that $E\setminus\{x_N\}$ is not a sampling sequence.

Since $N-1 \ge N_2-N_1+m$, we see from the proof of Lemma~\ref{Lm:L1} that
$\rank(\Phi_1,\ldots, \Phi_{N-1})\!$ $<   N_2-N_1+m$.
But $\rank(\Phi) = N_2-N_1+m$.
Hence $\Phi_N$ is not a linear combination of $\Phi_1$, $\ldots$, $\Phi_{N-1}$.
Therefore, if there exists some $(a_1, \ldots, a_N)\in \bbR^N$  such that
\begin{equation}\label{eq:e5}
  \sum_{i=1}^N a_i \Phi_i = 0,
\end{equation}
then we have $a_N=0$.
It follows that for $s = (1,\ldots, 1, 1)$ and $s' = (1,\ldots, 1, -1)$,
$  \Phi  D_s$ and $ \Phi  D_{s'}$ have the same null space.
By Lemma~\ref{Lm:L0},
 $E$ is not an almost phaseless sampling sequence,
 which contradicts with the assumption.

Next we prove the sufficiency.
Suppose that $E$ meets (\ref{eq:c0a})--(\ref{eq:c3a}).
by Lemma~\ref{Lm:L2}, there is a subsequence $E'\subset E$
such that $K:= \# E'= N_2-N_1+m+1$ and $E'$
meets (\ref{eq:c1a})--(\ref{eq:c3a}).
We see from Proposition~\ref{prop:local sampling} that $E'$ remains a local sampling sequence whenever any one of its elements is removed.
Let $\Phi'$ be defined similarly with (\ref{eq:Phi}).
Then $\Phi'$ is full spark. Hence there exists some $a \in\bbR^K$  with non-zero entries such that
\[
   \Phi' a = 0.
\]
It follows that for $s = (p_1, \ldots, p_K)\in P_K$,
the null space of $\Phi' D_s$  is $\{x D_s a:\, x\in\bbR\}$.
Since entries of $a$ are non-zero, these null spaces are distinct.
By Lemma~\ref{Lm:L0},
 $E'$ is an almost phaseless sampling sequence.
Since $E'\subset E$, we get the conclusion as desired.
\end{proof}

\section{Local Phaseless Sampling in Spline Spaces}

In this section, we give a proof of Theorem~\ref{thm:PR}. We begin with a characterization of separable functions.
The following result coincides with \cite[Theorem II.6]{CCSW2016} and \cite[Corollary 2.6]{Cheng2017}, where the separability for
functions in general shift invariant spaces was established.

\begin{Lemma}\label{Lm:sep}
Let $N_1<N_2$ be integers. Then  a function $f\in \VN$ is separable if and only if $N_2-N_1 \ge 2$ and
$f = \sum_{n=N_1-m}^{N_2-1} c_n \varphi_m(\cdot-n)$ for some $c_{N_1-m},\ldots, c_{N_2-1}\in\bbR$
such that $c_{n_1}\ne 0$, $c_{n_2}\ne 0$ and $c_n=0$, $n_1<n<n_2$ for some $n_1, n_2$ with
$n_2-n_1 \ge m+1$.
\end{Lemma}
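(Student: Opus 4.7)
The lemma claims an algebraic characterization of separability via the unique expansion $f = \sum_{n=N_1-m}^{N_2-1} c_n\varphi_m(\cdot-n)$ on $[N_1,N_2]$, valid because $\{\varphi_m(\cdot-n)|_{[N_1,N_2]}: N_1-m\le n\le N_2-1\}$ is a basis of $\VN$. For the easier direction $(\Leftarrow)$, I would split the expansion at the gap, setting $f_1 = \chi_{[N_1,N_2]}\sum_{N_1-m\le n\le n_1} c_n\varphi_m(\cdot-n)$ and $f_2 = \chi_{[N_1,N_2]}\sum_{n_2\le n\le N_2-1} c_n\varphi_m(\cdot-n)$. Since $\varphi_m$ is supported on $[0,m+1]$ with $\varphi_m(0)=\varphi_m(m+1)=0$, $f_1$ is zero on $[n_1+m+1,\infty)$ and $f_2$ is zero on $(-\infty,n_2]$; the hypothesis $n_2\ge n_1+m+1$ then forces $f_1f_2\equiv 0$ on $\bbR$. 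Each $f_i$ is nonzero because on the open unit interval $(n_i+m,n_i+m+1)\subset [N_1,N_2]$ only the term $c_{n_i}\varphi_m(\cdot-n_i)$ contributes and $c_{n_i}\ne 0$.

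For the harder direction $(\Rightarrow)$, suppose $f=f_1+f_2$ with $f_i=g_i\chi_{[N_1,N_2]}$, $g_i=\sum_n a_n^{(i)}\varphi_m(\cdot-n)\in V_m$ nonzero on $[N_1,N_2]$, $f_1f_2\equiv 0$, so $c_n=a_n^{(1)}+a_n^{(2)}$. The plan rests on two observations. First, on each $(k,k+1)$ with $N_1\le k\le N_2-1$ the splines $g_1,g_2$ are polynomials, so $g_1g_2\equiv 0$ there forces $g_i\equiv 0$ on $(k,k+1)$ for some $i$. Second, the $m+1$ restrictions $\varphi_m(\cdot-j)|_{(k,k+1)}$, $j=k-m,\ldots,k$, are linearly independent (they span all polynomials of degree $\le m$ on that interval), so $g_i\equiv 0$ on $(k,k+1)$ iff $a_{k-m}^{(i)}=\cdots=a_k^{(i)}=0$. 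Define $B_i=\{k\in\{N_1,\ldots,N_2-1\}: g_i\not\equiv 0 \text{ on }(k,k+1)\}$; the two observations make $B_1,B_2$ disjoint and each nonempty. I then take the extremal indices $k^*:=\max B_1$ and $n^*:=\max\{n:a_n^{(2)}\ne 0\}$. Because $\{k^*+1,\ldots,N_2-1\}\cap B_1=\emptyset$, observation two gives $a_n^{(1)}=0$ for $k^*+1-m\le n\le N_2-1$; from $k^*\notin B_2$, $a_n^{(2)}=0$ for $k^*-m\le n\le k^*$. Since $g_1\not\equiv 0$ on $(k^*,k^*+1)$, the only coefficient still free, $a_{k^*-m}^{(1)}$, must be nonzero, so $c_{k^*-m}\ne 0$ and $n^*\le k^*-m-1$. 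A symmetric analysis at $n^*$ shows $g_2\not\equiv 0$ on $(n^*+m,n^*+m+1)$, so $n^*+m\in B_2$, hence $a_n^{(1)}=0$ for $n^*\le n\le n^*+m$; together with $a_n^{(2)}=0$ for $n>n^*$ this yields $c_{n^*}=a_{n^*}^{(2)}\ne 0$ and $c_n=0$ for $n^*<n\le n^*+m$. Setting $n_1:=n^*$ and $n_2:=\min\{n>n^*:c_n\ne 0\}\le k^*-m$ then gives $n_2-n_1\ge m+1$, and the distinct indices $n^*+m,k^*\in\{N_1,\ldots,N_2-1\}$ force $N_2-N_1\ge 2$.

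The main obstacle is precisely this last bookkeeping: a priori the vanishing regions of $a_n^{(1)}$ and $a_n^{(2)}$ sit in different ranges, and the sum $c_n=a_n^{(1)}+a_n^{(2)}$ might happen to be nonzero even when the two families of coefficients look complicated. The decisive device is the choice of the extremal indices $k^*=\max B_1$ and $n^*=\max\{n:a_n^{(2)}\ne 0\}$, which forces the two coefficient-vanishing regions to interlock so cleanly that their sum $c$ exhibits a block of at least $m$ consecutive zeros sandwiched between two nonzero entries of distance $\ge m+1$.
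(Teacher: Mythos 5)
Your overall strategy for the necessity direction matches the paper's — reduce $f_1f_2\equiv 0$ to ``on each unit interval one factor vanishes identically,'' convert that via local linear independence of the B-splines into the vanishing of $m+1$ consecutive coefficients, and locate the gap by extremal index choices — but as written there is a concrete gap at the step ``$n^*\le k^*-m-1$.'' From $k^*\notin B_2$ you only get $a^{(2)}_n=0$ for $k^*-m\le n\le k^*$; nothing you have established rules out $a^{(2)}_n\ne 0$ for some $n>k^*$, because an index $k>k^*=\max B_1$ merely fails to lie in $B_1$ and may perfectly well lie in $B_2$. Concretely, take $m=1$, $[N_1,N_2]=[0,4]$, $f_1=\varphi_1(\cdot+1)\chi_{[0,4]}$, $f_2=\varphi_1(\cdot-2)\chi_{[0,4]}$: then $B_1=\{0\}$, $B_2=\{2,3\}$, $k^*=0$, $n^*=2$, the inequality $n^*\le k^*-m-1=-2$ fails, and your $n_2:=\min\{n>n^*:\,c_n\ne 0\}$ is the minimum of the empty set. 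The argument works only when $B_1$ contains the rightmost element of $B_1\cup B_2$, so you must first relabel $f_1,f_2$ to arrange this (the paper does exactly this orientation step with its ``without loss of generality, assume $i_1<i_2$''); after the relabeling, every $k>k^*$ lies in neither $B_1$ nor $B_2$, so both $g_1$ and $g_2$ vanish on $(k,k+1)$, hence $a^{(2)}_n=0$ for all $n\ge k^*-m$ and your chain of deductions goes through. A secondary caveat: when $k^*=N_2-1$ the set $\{k^*+1,\ldots,N_2-1\}$ is empty and you cannot conclude that $a^{(1)}_{k^*-m}$ is ``the only coefficient still free''; but you only need \emph{some} $c_j\ne 0$ with $j\ge k^*-m$, which still follows from $g_1\not\equiv 0$ on $(k^*,k^*+1)$ together with $a^{(2)}_n=0$ for $n\ge k^*-m$.

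The sufficiency direction uses the same splitting as the paper and is essentially fine, except that your witness interval for $f_2\ne 0$ is wrong: on $(n_2+m,n_2+m+1)$ the splines $\varphi_m(\cdot-n)$ with $n_2<n\le n_2+m$ also contribute, so $f_2$ does not reduce to $c_{n_2}\varphi_m(\cdot-n_2)$ there, and that interval need not even lie in $[N_1,N_2]$ when $n_2=N_2-1$. Use $(n_2,n_2+1)$ instead: the only contributing index with $n\ge n_2$ is $n=n_2$, and $n_2\le N_2-1$ together with $n_2\ge n_1+m+1\ge N_1+1$ places this interval inside $[N_1,N_2]$.
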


\begin{proof}
Since $\{\varphi_m(\cdot-n)|_{[N_1,N_2]}:\, N_1-m\le n\le N_2-1\}$ is a basis for $\VN$, for any $f\in \VN$,
there exist some $c_{N_1-m}$, $\ldots$, $c_{N_2-1}\in\bbR$ such that
$f = \sum_{n=N_1-m}^{N_2-1} c_n \varphi_m(\cdot-n)$.

Necessity.
Assume that $f$ is separable, that is, there exist $f_1, f_2\in \VN\setminus\{0\}$ such that
$f = f_1 + f_2$ and $f_1(x)f_2(x) = 0$ for all $x\in [N_1, N_2]$.
For any $N_1\le n<N_2$, since both $f_1$ and $f_2$ are polynomials on $[n,n+1]$,
one of $f_1$ and $f_2$ must be identical to zero on $[n,n+1]$.

On the other hand, since $f_1, f_2\ne 0$, there exist integers $i_1$ and $i_2$ such that
\[
  f_1|_{[i_2,i_2+1]} \ne 0 \quad \mathrm{and} \quad f_2|_{[i_1,i_1+1]} \ne 0.
\]
Consequently,
\[
  f_1|_{[i_1,i_1+1]} =0 \quad \mathrm{and} \quad f_2|_{[i_2,i_2+1]} = 0.
\]
Hence $N_2-N_1\ge 2$. Without loss of generality, assume that $i_1<i_2$.

Let
\begin{equation}\label{eq:sep:e1}
  i'_1 = \max \{i:\, f_1|_{[i_1,i]}=0\}.
\end{equation}
Then we have $i_1<i'_1 \le i_2$ and $f_1|_{[i'_1, i'_1+1]}\ne 0$. Since $ f_1f_2=0$, we have
$f_2|_{[i'_1, i'_1+1]}=0$.

Suppose that
$f_l = \sum_{n=N_1-m}^{N_2-1} c_n^{(l)} \varphi_m(\cdot-n)$, $l=1,2$.  We see from (\ref{eq:sep:e1}) that
\begin{equation}\label{eq:sep:e2}
  c^{(1)}_{i_1-m} = \ldots = c^{(1)}_{i'_1-1} = 0 \quad \mathrm{and} \quad c^{(1)}_{i'_1} \ne 0.
\end{equation}
On the other hand, we see from $f_2|_{[i'_1,i'_1+1]}=0$ that
\[
  c^{(2)}_{i'_1-m} = \ldots = c^{(2)}_{i'_1} = 0.
\]
Hence $c_{i'_1} \ne 0$ and
\begin{equation}\label{eq:a1}
c_i = c^{(1)}_i + c^{(2)}_i = 0,\qquad i'_1-m \le i\le i'_1-1.
\end{equation}
Since  $f_2|_{[i_1,i_1+1]}\ne 0$,  there is some $i_0$ with $i_1-m\le i_0 \le i_1 < i'_1$ such that
\[
  c^{(2)}_{i_0} \ne 0.
\]
It follows from (\ref{eq:sep:e2}) that  $c_{i_0}\ne 0$. By (\ref{eq:a1}), $i_0<i'_1-m$.
Set $n_1 = \max\{n\in [N_1-m,i'_1-m-1]:\, c_n\ne 0\}$
and $n_2 = i'_1$. Then we get the conclusion as desired.

Sufficiency.
Let $f$ meets the hypothesis.
Set
\[
  f_1 = \sum_{n=N_1-m}^{n_1} c_n \varphi_m(\cdot -n)
  \quad \mathrm{and} \quad
  f_2 = \sum_{n=n_2}^{N_2-1} c_n \varphi_m(\cdot -n).
\]
Then we have $f_1, f_2\ne 0$ and  $f=f_1+f_2$. Since
\[
  \supp f_1 \subset [N_1, n_1+m+1],\qquad
  \supp f_2 \subset [n_2, N_2]
\]
and $n_2-n_1\ge m+1$, we have $f_1f_2=0$.
This completes the proof.
\end{proof}

We see from Proposition~\ref{prop:local sampling} that for a sequence $E\subset [N_1, N_2]$ to be a sampling sequence
for $\VN$, it must satisfy four conditions (\ref{eq:c0})--(\ref{eq:c3}).
The following result shows that whenever $\#E \ge N_2-N_1+m$, it  determines functions in $V_m$ locally.

\begin{Lemma}\label{Lm:samp}
Suppose that  $N_1<N_2$ are integers and  $E\subset [N_1, N_2]$. If $\# E\ge N_2-N_1+m$, then there exist some integers $n_1, n_2\in [N_1, N_2]$
such that $n_1<n_2$ and $E\cap[n_1,n_2]$ is a sampling sequence for $V_m|_{[n_1,n_2]}$.
\end{Lemma}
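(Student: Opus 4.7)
The natural strategy is induction on $N_2-N_1$, paired with Proposition~\ref{prop:local sampling} so that any candidate $E \cap [n_1, n_2]$ is recognized as a sampling sequence through the four Schoenberg--Whitney conditions (\ref{eq:c0})--(\ref{eq:c3}). For the base case $N_2 - N_1 = 1$ I would check that $E$ itself satisfies all four conditions on $[N_1, N_2]$: (\ref{eq:c0}) is the hypothesis, (\ref{eq:c3}) is vacuous since $n_2 - n_1 - m \leq 0$, and (\ref{eq:c1}), (\ref{eq:c2}) at $k = 1$ only ask that not all of the $\ge m+1 \ge 2$ distinct points of $E$ cluster at a single endpoint, which is automatic.

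For the inductive step, assume the lemma for every integer interval of length strictly less than $N_2 - N_1$. If (\ref{eq:c1})--(\ref{eq:c3}) all hold on $[N_1, N_2]$, I take $[n_1, n_2] = [N_1, N_2]$ and invoke Proposition~\ref{prop:local sampling} directly. Otherwise, my goal is to exhibit a strict integer sub-interval on which $E$ still carries at least the threshold number of points, so that the inductive hypothesis delivers the desired $[n_1, n_2]$ inside it. Suppose first that (\ref{eq:c1}) fails, and let $k^* = \min\{k \in [1, N_2-N_1] : \#(E \cap [N_1, N_1+k)) < k\}$. Distinctness of $E$ forces $k^* < N_2 - N_1$ (otherwise at least $m+1 \ge 2$ points would have to sit in $\{N_2\}$), so $[N_1+k^*, N_2]$ is a proper sub-interval, and subtracting $\#(E \cap [N_1, N_1+k^*)) \leq k^* - 1$ from $\#E$ gives
\[
  \#(E \cap [N_1+k^*, N_2]) \geq (N_2 - N_1 + m) - (k^* - 1) = N_2 - (N_1+k^*) + m + 1,
\]
so the induction hypothesis applies. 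The case when (\ref{eq:c2}) fails is symmetric, using $N_2' = N_2 - k^*$.

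Suppose finally that (\ref{eq:c1}), (\ref{eq:c2}) hold but (\ref{eq:c3}) fails, witnessed by integers $n_1^0 < n_2^0$ in $[N_1, N_2]$ with $\#(E \cap (n_1^0, n_2^0)) \leq n_2^0 - n_1^0 - m - 1$. First I observe that $n_1^0 > N_1$ and $n_2^0 < N_2$: at $n_1^0 = N_1$, (\ref{eq:c1}) applied with $k = n_2^0 - N_1$ combined with the failure of (\ref{eq:c3}) would force $\#(E \cap \{N_1\}) > m \geq 1$, which is impossible, and symmetrically for the right endpoint. The disjoint decomposition $[N_1, N_2] = [N_1, n_1^0] \sqcup (n_1^0, n_2^0) \sqcup [n_2^0, N_2]$ then yields
\[
  \#(E \cap [N_1, n_1^0]) + \#(E \cap [n_2^0, N_2]) \geq (N_2-N_1+m) - (n_2^0 - n_1^0 - m - 1) = (n_1^0-N_1) + (N_2-n_2^0) + 2m + 1,
\]
and if both summands were below their respective sampling thresholds $n_1^0 - N_1 + m$ and $N_2 - n_2^0 + m$, the sum would be at most $(n_1^0-N_1) + (N_2-n_2^0) + 2m - 2$, a contradiction. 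Hence at least one of the proper sub-intervals $[N_1, n_1^0]$, $[n_2^0, N_2]$ meets the inductive threshold, and the induction hypothesis applies to it.

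The main obstacle is case (\ref{eq:c3}): one must ensure that an interior-density failure actually produces a proper sub-interval, which is exactly why the case split is organized so that (\ref{eq:c1}) and (\ref{eq:c2}) are already known to hold before (\ref{eq:c3}) is invoked, and the pigeonhole count in the displayed inequality must be sharp enough to guarantee that at least one endpoint sub-interval still carries the density required for the induction.
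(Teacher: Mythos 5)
Your proposal is correct and follows essentially the same route as the paper: induction on $N_2-N_1$, with the three failure cases of Proposition~\ref{prop:local sampling} each producing a proper sub-interval that still carries at least $(\text{length})+m$ points. The only cosmetic difference is in the interior case, where the paper fixes one end-interval by a WLOG and counts directly via (\ref{eq:c1}), while you run a pigeonhole over both end-intervals; the underlying count is the same.
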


\begin{proof}
We prove the conclusion with induction on $N_2-N_1$.
We see from Proposition~\ref{prop:local sampling} that it is the case if $N_2-N_1=1$.

Now we assume that the conclusion is true whenever $1\le N_2-N_1\le n$ for some $n\ge 1$.
Let us consider the case of $N_2-N_1=n+1$.
Assume that $E$ is not a sampling sequence for $\VN$.
By Proposition~\ref{prop:local sampling}, there are three cases.

(i).\,\, There is some  $k\in [1, N_2-N_1]$ such that  $\#(E\cap[N_1,N_1+k))<k$.

In this case, we have $k<N_2-N_1$ and  $\#(E\cap [N_1+k, N_2]) \ge N_2-N_1-k+m+1$. Now we see from the inductive assumption that there exist
some integers $n_1,n_2 \in [N_1+k, N_2]$  such that $n_1<n_2$ and
$E\cap[n_1,n_2]$ is a sampling sequence for $V_m|_{[n_1,n_2]}$.

(ii).\,\, There is some $k\in [1, N_2-N_1]$ such that  $\#(E\cap(N_2-k,N_2])<k$.

Similarly to the previous case we can prove the conclusion.

(iii).\,\, $\#(E\cap[N_1,N_1+k))\ge k$ and $\#(E\cap(N_2-k,N_2])\ge k$ for any $1\le k\le N_2-N_1$.

If for any integers $n_1<n_2$ with $[n_1,n_2]\subset [N_1,N_2]$, we have $\#(E\cap(n_1,n_2))\ge n_2-n_1-m$, then we see from Proposition~\ref{prop:local sampling}
that $E$ is a sampling sequence for $\VN$.
If  $\#(E\cap(n_1,n_2))\le n_2-n_1-m-1$ for some $n_1<n_2$, then we have $n_2-n_1\ge m+1$ and $(n_1,n_2)\ne (N_1,N_2)$.
Without loss of generality, we assume that $N_1<n_1$. Then
\begin{eqnarray*}
  \#(E\cap [N_1,n_1]) &=& \#(E\cap [N_1,n_2)) - \#(E\cap (n_1,n_2)) \\
  &\ge& (n_2-N_1)-(n_2-n_1-m-1)\ge n_1-N_1+m+1.
\end{eqnarray*}
By the inductive assumption, there exist some integers $i_1<i_2$ in $[N_1, n_1]$ such that $E\cap[i_1,i_2]$ is a sampling sequence
for $V_m|_{[i_1,i_2]}$.

By induction, the conclusion is true for any $N_1<N_2$.
\end{proof}

To prove Theorem~\ref{thm:PR}, we need the following result on the invertibility of submatrix of (\ref{eq:Phi}).

\begin{Proposition}[{\cite[Theorem 4.65]{Sch}}]\label{prop:Sch}
Let $n_1<\ldots<n_r$ be $r$ integers and $t_1<\ldots<t_r$ be $r$ real numbers.
Then the matrix $[\varphi_m(t_i-n_j)]_{1\le i,j\le r}$ is invertible if and only if
$\varphi_m(t_i-n_i)\ne 0$ for $1\le i\le r$.
\end{Proposition}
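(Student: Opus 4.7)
The plan is to split the biconditional into its two directions, treating necessity via the compact support of $\varphi_m$ and handling sufficiency through a variation-diminishing / zero-counting argument for splines. Throughout I use only two structural facts about B-splines: $\supp \varphi_m = [0,m+1]$, and the total positivity of the B-spline basis (which subsumes the variation-diminishing property).

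For necessity, suppose $\varphi_m(t_{i_0}-n_{i_0})=0$ for some index $i_0$. Since $\supp\varphi_m=[0,m+1]$, either $t_{i_0}\le n_{i_0}$ or $t_{i_0}\ge n_{i_0}+m+1$. In the first case, the monotonicity of both sequences gives $t_i\le t_{i_0}\le n_{i_0}\le n_j$ for all $i\le i_0$ and $j\ge i_0$, so $\varphi_m(t_i-n_j)=0$ for every such pair. Consequently the top $i_0$ rows of $[\varphi_m(t_i-n_j)]$ have all nonzero entries confined to the first $i_0-1$ columns, producing $i_0$ vectors lying in an $(i_0-1)$-dimensional coordinate subspace of $\bbR^r$; this forces linear dependence and hence singularity. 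The second case is handled symmetrically by producing an analogous zero block in the bottom-left.

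For sufficiency, assume $\varphi_m(t_i-n_i)\ne 0$ for all $i$, equivalently $n_i<t_i<n_i+m+1$. Suppose for contradiction that the matrix is singular, so there exists a nonzero $c=(c_1,\ldots,c_r)^*$ with $\sum_{j=1}^r c_j\varphi_m(t_i-n_j)=0$ for every $i$. Set
\[
 f(x) = \sum_{j=1}^{r} c_j\,\varphi_m(x-n_j),
\]
which is a nonzero spline of degree $m$ with knots among the integers, vanishing at $t_1<\cdots<t_r$. I would invoke the variation-diminishing property of B-splines: the number of sign changes of $f$ on $\bbR$ is at most the number of sign changes $S^-(c_1,\ldots,c_r)$ of the coefficient sequence, which is at most $r-1$. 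Combining this with the Schoenberg--Whitney condition---which places each $t_i$ strictly inside the ``diagonal'' support $(n_i,n_i+m+1)$ and thus forces the zeros of $f$ to interlace the supports of the basis B-splines in a prescribed manner---yields more than $r-1$ sign changes, a contradiction.

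The main obstacle is turning the rough sign-change bound into a rigorous zero count, since zeros of even order and vanishing coefficients complicate the bookkeeping. A cleaner alternative I would pursue is induction on the degree $m$: the base case $m=0$ reduces to a diagonal matrix with nonzero diagonal entries (hence trivially invertible), and the inductive step uses the de Boor--Cox / divided-difference identity $\varphi_m'(x)=\varphi_{m-1}(x)-\varphi_{m-1}(x-1)$ to express first differences of rows of the collocation matrix for degree $m$ as the collocation matrix for degree $m-1$ at a reduced knot-node configuration, verifying that the Schoenberg--Whitney condition is preserved under this reduction. This inductive route sidesteps the need to count zeros of $f$ directly and reduces the whole argument to Cramer-type row operations plus the $m=0$ case.
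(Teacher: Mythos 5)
The paper does not prove this proposition at all; it is quoted verbatim from Schumaker \cite[Theorem 4.65]{Sch} (the Sch\"onberg--Whitney theorem), so there is no in-paper argument to compare against and any proof you supply is necessarily ``a different route.'' Your necessity direction is correct and complete: from $\varphi_m(t_{i_0}-n_{i_0})=0$ and $\supp\varphi_m=[0,m+1]$ you correctly extract either $t_{i_0}\le n_{i_0}$ or $t_{i_0}\ge n_{i_0}+m+1$, and the monotonicity of the two sequences then produces a zero block forcing $i_0$ rows (resp.\ $r-i_0+1$ rows) into a subspace of dimension $i_0-1$ (resp.\ $r-i_0$). That half stands on its own.

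The sufficiency direction, however, has a genuine gap, and you essentially concede it yourself. The variation-diminishing route is not a proof: $f=\sum_j c_j\varphi_m(\cdot-n_j)$ vanishing at $t_1<\cdots<t_r$ does not directly yield ``more than $r-1$ sign changes,'' because zeros need not be sign changes, $f$ may vanish identically on subintervals, and some $c_j$ may be zero so that the interlacing $t_i\in(n_i,n_i+m+1)$ no longer pins the zeros against the supports of the \emph{active} B-splines. Making this rigorous requires the full zero-counting theory for splines (Schumaker's bound $Z(f)\le r-1$ for a nontrivial spline in an $r$-dimensional B-spline span, with multiplicities and interval zeros accounted for) --- which is precisely the hard content of the theorem, not a routine bookkeeping step. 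Your fallback induction on $m$ is also misdescribed: the identity $\varphi_m'(x)=\varphi_{m-1}(x)-\varphi_{m-1}(x-1)$ relates \emph{differentiation} of $f$ to first differences of the \emph{coefficients}, not ``first differences of rows of the collocation matrix''; to use it one must invoke Rolle's theorem to place $r-1$ zeros of $f'$ between consecutive zeros of $f$, and then verify that these uncontrolled Rolle points still satisfy the Sch\"onberg--Whitney interlacing for the degree-$(m-1)$ configuration --- a nontrivial step you do not address (and the base case $m=0$ gives a triangular-type matrix, not a diagonal one, which again needs an argument). Since the result is classical and the paper simply cites it, the honest fix is either to do the same or to carry out the zero-counting argument in full.
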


\begin{proof}[Proof of the necessity of Theorem~\ref{thm:PR}]
Denote $E=\{x_i:\, 1\le i\le N\}$
and
\[
  J_k = \# (E\cap [N_1, N_1+k)), \qquad 1\le k\le N_2-N_1.
\]
{\color{blue}
First, we prove that
\begin{eqnarray}
\# (E\cap [N_1, N_1+1)) &\ge& m+1,  \quad       \label{eq:PR:2a}  \\
\# (E\cap (N_2-1, N_2]) &\ge& m+1.  \quad       \label{eq:PR:3a}
\end{eqnarray}
}
Assume on the contrary that $J_1\le m$. Define
\begin{eqnarray}
  A_1 &=& [\varphi_m(x_i - n)]_{1\le i\le J_1, N_1-m\le n\le N_2-1}, \label{eq:A1a}\\
  A_2 &=& [\varphi_m(x_i - n)]_{J_1+1\le i\le N, N_1-m\le n\le N_2-1}.\label{eq:A2a}
\end{eqnarray}
Note that
$\rank(A_1) \le J_1\le m$ and
the last $N_2-N_1-1$ columns of $A_1$ are zeros.
There is a solution $c = (c_{N_1-m}, \ldots, c_{N_2-1})^* \in \bbR^{N_2-N_1+m}$  of the equation
\[
  A_1 c = 0
\]
such that $c_{N_1+1} = \ldots = c_{N_2-1} = 1$
and not all of $c_{N_1-m}$, $\ldots$, $c_{N_1}$ are zeros.
Since the first column of $A_1$ is not zero,
not all of $c_{N_1-m+1}$, $\ldots$, $c_{N_1}$ are zeros.

Let $c' = (1+|c_{N_1-m}|, 0,\ldots, 0)^*$. Then we have
\[
  A_2 c' = 0.
\]
Let
\begin{equation}\label{eq:fg}
\left\{
\begin{aligned}
f_1 &= \sum_{n=N_1-m}^{N_2-1} \frac{1}{2}(c_n+c'_n) \varphi_m(\cdot - n),  \\
 f_2 &= \sum_{n=N_1-m}^{N_2-1} \frac{1}{2}(c_n-c'_n) \varphi_m(\cdot - n).
\end{aligned}\right.
\end{equation}
Since there are at most $m-1$ entries of $c\pm c'$ are zeros, by Lemma~\ref{Lm:sep},
both $f_1$ and $f_2$ are nonseparable.  Moreover,  $f_1\pm f_2 \ne 0$ and
\begin{eqnarray*}
   f_1(x_i) &=& -f_2(x_i),\qquad 1\le i\le J_1, \\
   f_1(x_i) &=&  f_2(x_i),\qquad J_1+1\le i\le N.
\end{eqnarray*}
Hence we can not recover $f$ from unsigned samples, which contradicts with the hypothesis.
This proves (\ref{eq:PR:2a}).
Similarly we can prove (\ref{eq:PR:3a}).

{\color{blue} Next we prove (\ref{eq:PR:4}).} We begin with the simple case $n_2-n_1=1$. We have to show that
\begin{equation}\label{eq:n}
\# (E\cap (n,n+1)) \ge 1, \qquad N_1  \le  n\le N_2-1.
\end{equation}
We see from (\ref{eq:PR:2a}) and (\ref{eq:PR:3a}) that (\ref{eq:n}) is true for $n=N_1$ or $N_2-1$.
Now assume that $\# (E\cap (n,n+1)) = 0$ for some $n\in [N_1+1, N_2-2]$.
Define
\begin{eqnarray*}
  A_1 &=& [\varphi_m(x_i - j)]_{x_i \le n, N_1-m\le j\le N_2-1}, \\
  A_2 &=& [\varphi_m(x_i - j)]_{x_i\ge n+1, N_1-m\le j\le N_2-1}.
\end{eqnarray*}
Let
$c = (0,\ldots, 0,1,\ldots,1)^*$
and $c' = (1,\ldots, 1, 0,\ldots,0)^*$,
where the first $n-N_1+m$ entries of $c$ and
the last $N_2-n+m-1$ entries of $c'$ are zeros and the rest are $1$.
Then we have
\[
  A_1 c = A_2 c' = 0.
\]
Let $f_1$ and $f_2$ be defined by (\ref{eq:fg}).
Since there are only $m-1$ entries of $c\pm c'$ are zeros,
with similar arguments we get a contradiction. Hence (\ref{eq:n}) is true.

Now we assume that (\ref{eq:PR:4}) is false. Then there are integers  $i_1<i_2$ with $i_2-i_1\ge 2$
such that $\# (E\cap (i_1, i_2)) \le 2(i_2-i_1)-2$.
Let
\begin{eqnarray}
n_2 &=& \min\{n\in [N_1,N_2]:\, \mbox{ there exists some $n_1<n$ such that } \nonumber  \\
 &&\hskip 30mm  \# (E\cap (n_1, n)) \le 2(n-n_1)-2 \},   \label{eq:n2} \\
n_1 &=& \max\{n\in [N_1,n_2):\, \# (E\cap (n, n_2)) \le 2(n_2-n)-2 \}.\label{eq:n1}
\end{eqnarray}
Then we have
\begin{eqnarray}
  && \hskip -2em n_1  \le n_2-2,  \label{eq:n1n2:1}\\
  &&\hskip -2em \# (E\cap (n_1, n_1+1)) = \# (E\cap (n_1, n_1+1]) = 1, \label{eq:n1 n11}  \\
  &&\hskip -2em \# (E\cap (n_2-1, n_2)) = \# (E\cap [n_2-1, n_2)) = 1, \label{eq:n1n2:3} \\
  &&\hskip -2em  \#(E\cap [n,n+1)) = \#(E\cap (n,n+1]) = 2, \,\, n_1+1\le n\le n_2-2,\label{eq:n1n2:4} \\
    &&\hskip -2em  E\cap(n_1,n_2)\cap \bbZ = \emptyset. \label{eq:n1n2:5}
\end{eqnarray}
Moreover, we see from the definitions of $n_1, n_2$ and (\ref{eq:n1 n11}) that
\begin{align}
  \# (E\cap (n,n+2)) &\ge 3,  & N_1\le n\le n_1-1,  \label{eq:nn2}\\
 %\color{red}
 \# (E\cap (n,n_1]) &\ge 2(n_1-n), & N_1\le n\le n_1-1.\label{eq:nn1}
\end{align}
It follows % from  (\ref{eq:n1 n11}) and (\ref{eq:nn1})
that
if $n_1>N_1+1$, then
\begin{eqnarray*}
 \#(E\cap [N_1, n_2))
   &= &  \#(E\cap [N_1, N_1+1]) + \#(E\cap (N_1+1, n_1])  \\
   &&\qquad +  \#(E\cap (n_1, n_2))\\
   &\ge& 2(n_2-N_1)+m-3.
\end{eqnarray*}
And the above inequality is also true if $n_1\le N_1+1$.

By (\ref{eq:n}), there is some $E_2\subset E$ such that
\begin{eqnarray}
  &&E\cap[n_2,N_2]\subset E_2, \label{eq:E12:1}\\
  &&\#(E_2 \cap [N_1, n_2) ) = n_2-N_1-1,  \label{eq:E12:2}\\
  &&\#(E_2\cap(n,n+1)) = 1, \qquad N_1 \le n\le n_2-2.\label{eq:E12:3}
\end{eqnarray}
Let $E_1 = E\setminus E_2$.
Define
\begin{eqnarray}
  A_1 &=& [\varphi_m(x_i-l)]_{x_i\in E_1, N_1-m\le l\le N_2-1}, \label{eq:A1}\\
  A_2 &=& [\varphi_m(x_i-l)]_{x_i\in E_2, N_1-m\le l\le N_2-1}.\label{eq:A2}
\end{eqnarray}
Observe that $E_2\cap [n_2-1,n_2) = \emptyset$. The first $n_2-N_1$ columns of $A_2$ has the form
$\begin{pmatrix}
A_{21} \\
0
\end{pmatrix}$,
where
\[
  A_{21} = \begin{pmatrix}
  *  &  *  & ? & \ldots &  &  \\
  0  &  *  & * & \ldots &  &  \\
  0  &  0  & * & \ldots &  &  \\
     &  \ldots &  & & \ldots   &  \\
  0  &  0  & 0 & \ldots & * & * & ?\\
  0  &  0  & 0 & \ldots & 0 & * & * \\
  \end{pmatrix}
\]
is an $(n_2-N_1-1)\times(n_2-N_1)$ matrix.  Here '$*$' stands for a nonzero entry and '$?$' means uncertainty (it is $0$ if $m=1$).
By Proposition~\ref{prop:Sch}, every $(n_2-N_1-1)\times(n_2-N_1-1)$ submatrix of $A_{21}$ is invertible. Hence there exists
some  $ c \in \bbR^{n_2-N_1}$ such that none of its entries is zero and
\[
  A_{21}  c = 0.
\]
By appending $N_2-n_2+m$ zeros to $c$ we get a $c'\in \bbR^{N_2-N_1+m}$, none of whose first $n_2-N_1$ entries is zero,
such that
\[
  A_2 c' = 0.
\]
On the other hand, since $E_1\subset [N_1, n_2)$, we have $A_1 = (A_{11}, 0)$,
where $A_{11}$ has $n_2-N_1+m$ columns and at least $n_2-N_1+m-2\ge n_2-n_1+m-2$ rows.
Note that
\[
\#(E_1\cap (n_1, n_1+1]) = 0 \,\, \mathrm{and}\,\, \#(E_1\cap (n, n+1]) = 1 \,\, \mathrm{for}\, n_1+1\le n\le n_2-1.
\]
$A_{11}$ has the following form,
\[
  A_{11} =
  \begin{pmatrix}
 \ldots &   &      &    & \ldots &   &   & \\
 \ldots & 0 & 0    & 0  & \ldots &   &   & \\
 \ldots & * & *    & 0  & \ldots &   &   & \\
 \ldots &   &\ldots&    & \ldots &   &   & \\
 \ldots &   &      &    & \ldots & * & * & 0  \\
 \ldots &   &      &    & \ldots & ? & * & * \\
  \end{pmatrix},
\]
where the submatrix consisting of the last $n_2-n_1$ columns of $A_{11}$ has only $n_2-n_1-1$ non-zero rows.
Using Proposition~\ref{prop:Sch} again, we  get some $\tilde c\in \bbR^{n_2-N_1+m}$ such that
the first $n_1-N_1+m$ entries of $\tilde c$ are zeros, none of the last $n_2-n_1$ entries is zero,
and $A_{11}\tilde c=0$.  Consequently, there is some $c=(c_{N_1-m},\ldots,c_{N_2-1})^*\in\bbR^{N_2-N_1+m}$ such that
\[
  c_i=0,\,\, i\le n_1-1\mathrm{\, or\,} i\ge n_2,
  \qquad c_i\ne 0,\,\, n_1\le i\le n_2-1,
\]
and
\[
  A_1 c = 0.
\]
By multiplying a factor we can suppose that
\begin{equation}\label{eq:cc1}
\min\{|c'_i|:\,    |c'_i|>0\}  > \max\{|c_i|:\,    |c_i|>0\}.
\end{equation}
Then we have
\begin{eqnarray*}
  c_i\pm c'_i &\ne& 0, \quad i\le n_2-m-1, \\
  c_{n_2-1}\pm c'_{n_2-1}   &\ne& 0, \\
  c_i\pm c'_i &=& 0, \quad i\ge n_2.
\end{eqnarray*}
Let $f_1$ and $f_2$ be defined by (\ref{eq:fg}).
With similar arguments we get a contradiction. Hence (\ref{eq:PR:4}) is true.

{\color{blue}
Next we prove (\ref{eq:PR:2}).}
Assume on the contrary that (\ref{eq:PR:2}) is false.
Since $J_1\ge m+1$,
there is some $k_0\ge 2$ such that
$J_{k_0} \le 2k_0 + m - 2$ and $J_i  \ge 2i+m-1$ for $1\le i\le k_0-1$.
By (\ref{eq:PR:4}), $\#(E\cap (N_1+k_0-1, N_1+k_0) ) \ge 1$. Hence $J_{k_0} = 2k_0+m-2$
and $N_1+k_0-1\not\in E$.

As in the above arguments, we can split $E$ into two subsequences $E_1$ and $E_2$ which satisfy (\ref{eq:E12:1})
- (\ref{eq:E12:3}) with $n_2$ being replaced by $N_1+k_0$.
Define $A_1$ and $A_2$ by (\ref{eq:A1}) and (\ref{eq:A2}), respectively.
Then there exists some $c'\in \bbR^{N_2-N_1+m}$, for which the first $k_0$ entries are nonzero and the last $N_2-N_1-k_0+m$ entries are zeros,
such that
\[
  A_2 c' = 0.
\]
On the other hand, since $\#E_1 = k_0+m-1$, by Proposition~\ref{prop:local sampling}, $E_1$ is not a sampling sequence for $V_m|_{[N_1,N_1+k_0]}$.
Hence there is some $c\in\bbR^{N_2-N_1+m}\setminus \{0\}$, whose last $N_2-N_1-k_0$ entries are zeros, such that
\[
  A_1 c = 0.
\]
Again, we assume that (\ref{eq:cc1}) holds. With similar arguments we  get a contradiction.
Hence (\ref{eq:PR:2}) is true. Similarly we can prove (\ref{eq:PR:3}).

\textcolor{blue}{Finally, we prove (\ref{eq:PR:1})}.
Assume on the contrary that $\#E \le 2(N_2-N_1+m)-2$.
By Lemma~\ref{Lm:L1}, $\#E \ge N_2-N_1+m$.
Take some $E_1\subset E$ such that
$\#E_1 = N_2-N_1+m-1$ and
\begin{equation}\label{eq:s:1}
\#(E_1\cap (n,n+1))\ge 1, \qquad N_1\le n\le N_2-1.
\end{equation}
We see from (\ref{eq:PR:4}) that such $E_1$ exists.
Let $E_2=E\setminus E_1$.
Define $A_1$ and $A_2$ by (\ref{eq:A1}) and (\ref{eq:A2}), respectively.
Since $\rank(A_1)\le N_2-N_1+m-1$,
there is some $c\in \bbR^{N_2-N_1+m}\setminus\{0\}$ such that
\[
  A_1 c = 0.
\]
We conclude that there is not an integer $i$ such that $c_i = \ldots = c_{i+m-1} = 0$.
Otherwise, we see from (\ref{eq:s:1}) that $c_l=0$ for all $N_1-m\le l\le N_2-1$,
which contradicts with $c\ne 0$.

On the other hand, since $\#(E\setminus E_1) \le N_2-N_1+m-1$, the equation
\[
  A_2 c' = 0
\]
has a non-zero solution.
Again, we assume that (\ref{eq:cc1}) holds. With similar arguments we   get a contradiction.
This completes the proof of the necessity.
\end{proof}

To prove the sufficiency of Theorem~\ref{thm:PR}, we first present some  preliminary results.

\begin{Lemma}\label{Lm:L3a}
Let $n_1<n'_1\le n_2<n'_2$ be integers and $E\subset [n_1, n'_2]$ be a sequence consisting of distinct points such that $\#(E\cap(i,j))\ge 2(j-i)-1$
whenever  $n_1\le i<j\le n'_2$.
Suppose that  $E_1\cup E_2=E$, $E_1\cap  E_2=\emptyset $ and  that
$E_l\cap[n_l, n'_l]\ge n'_l-n_l+m$, $l=1,2$.
Then there exist integers
$i_1<i'_1=i_2<i'_2$ in $[n_1,n'_2]$ such  that %  $i_2 \in[n'_1,n_2]$ and       %%%
%%%   This requires that E_l\cap [n_l,n'_l] is a sampling sequence
%%%
% $i_1, i'_1, i_2, i'_2\in [N_1, N_2]$ such that
% $i_1<i'_1$, $i_2<i'_2$,  $[i_1, i'_1]$ and $[i_2, i'_2]$ have and only have one common point,
% and
$E_l\cap[i_l, i'_l]$  is a sampling sequence for   $V_m|_{[i_l,i'_l]}$, $l=1,2$.
\end{Lemma}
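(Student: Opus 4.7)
My approach is to produce sampling subintervals for $E_1$ and $E_2$ separately via Lemma~\ref{Lm:samp}, and then to glue their endpoints at a common integer by exploiting the density hypothesis $\#(E\cap(i,j))\ge 2(j-i)-1$.

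I would first apply Lemma~\ref{Lm:samp} to $E_1\cap[n_1,n'_1]$ and to $E_2\cap[n_2,n'_2]$ (which, by hypothesis, contain at least $n'_l-n_l+m$ points for $l=1,2$), obtaining integers $\alpha_l<\beta_l$ in $[n_l,n'_l]$ such that $E_l\cap[\alpha_l,\beta_l]$ is a sampling sequence for $V_m|_{[\alpha_l,\beta_l]}$. Since $n'_1\le n_2$, this gives $\beta_1\le n'_1\le n_2\le\alpha_2$. In the favourable case $\beta_1=\alpha_2$, I would immediately take $i_1=\alpha_1$, $i'_1=i_2=\beta_1$, $i'_2=\beta_2$ and be done.

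Suppose instead $\beta_1<\alpha_2$. My plan is to walk the right endpoint of $E_1$'s sampling interval rightward from $\beta_1$, and the left endpoint of $E_2$'s sampling interval leftward from $\alpha_2$, one integer at a time, until the two endpoints collide at a common integer $s$; then $i_1=\alpha_1$, $i'_1=i_2=s$, $i'_2=\beta_2$ will suffice. A check against Proposition~\ref{prop:local sampling} shows that the unit extension $E_1\cap[\alpha_1,s]\to E_1\cap[\alpha_1,s+1]$ preserves the sampling property precisely when $E_1\cap(s,s+1)\neq\emptyset$ or $\{s,s+1\}\subset E_1$, and a symmetric criterion governs the unit retraction of $E_2$. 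Because $\#(E\cap(s,s+1))\ge 1$ for every unit interval and $E=E_1\cup E_2$ disjointly, every unit must carry a point of $E_1$ or $E_2$, which should enable at least one of the two moves at each step.

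The main obstacle is to guarantee that at least one of the two moves is valid in every intermediate configuration. When the two walks are separated by a single unit, $s+1=t$, the density argument closes the case directly: any $E$-point in $(s,s+1)=(t-1,t)$ belongs to $E_1$ (giving the extension) or to $E_2$ (giving the retraction), contradicting a joint failure. For wider gaps $t-s\ge 2$ the situation is subtler: simultaneous failure of both direct moves constrains the local structure of $E_1$ near $s$ and of $E_2$ near $t$, and closing this case requires invoking the finer density bound $\#(E\cap(s,t))\ge 2(t-s)-1$ together with the sampling conditions already in hand on $E_l\cap[\alpha_l,\beta_l]$, either to force one of the direct moves to succeed after all or to construct a multi-step move (e.g.\ extending $E_1$ by two integers at once using an $E_1$-point further inside the gap). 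The uniform case analysis needed to carry this out is the technical core of the proof.
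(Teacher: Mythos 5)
Your setup is sound and runs parallel to the paper's: both arguments reduce to producing one sampling interval for $E_1$ and one for $E_2$ whose endpoints meet at a common integer, starting from Lemma~\ref{Lm:samp} applied on $[n_1,n'_1]$ and $[n_2,n'_2]$. Your single-unit case is also correct: if $E_1\cap(s,s+1)\ne\emptyset$, all four conditions of Proposition~\ref{prop:local sampling} for $E_1\cap[\alpha_1,s+1]$ follow from those for $E_1\cap[\alpha_1,s]$ together with the one extra point, and symmetrically for $E_2$. But the proof is not complete: the configuration you yourself label ``the technical core'' --- a gap of length at least $2$ in which neither the unit extension of $E_1$ nor the unit retraction of $E_2$ is available --- is exactly where all the work lies, and you only list two ways it \emph{might} be closed without carrying either out. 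This configuration genuinely occurs (every point of $E\cap(s,s+1)$ may lie in $E_2$ while every point of $E\cap(t-1,t)$ lies in $E_1$), so a greedy unit-step walk can stall, and unsticking it is not a routine verification; it requires the same kind of global counting that your sketch defers.

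The paper sidesteps the walk by an extremal argument: take $i'_1$ to be the largest integer realizable as the right endpoint of an $E_1$-sampling interval with left endpoint in $[n_1,i'_1)$, and $i_2$ the smallest integer $\ge i'_1$ realizable as the left endpoint of an $E_2$-sampling interval. If $i'_1<i_2$, the hypothesis $\#(E\cap(i'_1,i_2))\ge 2(i_2-i'_1)-1$ forces one of $E_1,E_2$ (say $E_2$) to contain at least $i_2-i'_1$ points of the gap; setting $j'_2=\max\{j<i_2:\,\#(E_2\cap(j,i_2))\ge i_2-j\}$ and examining, via Proposition~\ref{prop:local sampling} and Lemma~\ref{Lm:samp}, exactly how $E_2\cap[j'_2,i'_2]$ can fail to be a sampling sequence, one extracts a sampling subinterval for $E_2$ with left endpoint strictly less than $i_2$, contradicting minimality. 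Some argument of this type is what your ``multi-step move or force a direct move'' placeholder must become; as written, the proposal is a plan whose decisive step is asserted rather than proved.
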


\begin{proof}
By Lemma~\ref{Lm:samp}, we may assume that
$E_l\cap[n_l, n'_l]$ is a sampling sequence for
$V_m|_{[n_l,n'_l]}$, $l=1,2$.
Note that $n'_1\le n_2$.
Let $i'_1$ be the maximum of all integers $i\in (n_1, n_2]$ such that for some $i_1\in [n_1, i)$, $E_1\cap [i_1,i'_1]$
is a sampling sequence for $V_m|_{[i_1,i'_1]}$.
And  let
$i_2$ be the minimum of all integers $i\in [i'_1, n'_2)$
 such that for some $i'_2\in (i_2,n'_2]$, $E_2\cap [i_2,i'_2]$
is a sampling sequence for $V_m|_{[i_2,i'_2]}$.
We conclude that $i'_1 = i_2$.

Assume on the contrary that $i'_1 < i_2$.
Since $\#(E\cap (i'_1,i_2))\ge 2(i_2-i'_1)-1$,
one of $\#(E_1\cap (i'_1,i_2))$
and $\#(E_2\cap (i'_1,i_2))$ must be no less than $i_2-i'_1$.
Without loss of generality, we assume that
$\#(E_2\cap (i'_1,i_2)) \ge i_2- i'_1$.
Let
\begin{equation}\label{eq:dj2}
j'_2 = \max\{ j<i_2:\, \#(E_2\cap (j,i_2)) \ge i_2-j\}.
\end{equation}
Then we have $i'_1 \le j'_2 < i_2$. For $j'_2 < j<i_2$, since
\[
  \# (E_2\cap (j, i_2)) \le i_2-j-1,
\]
we have
\[
  \# (E_2\cap (j'_2, j]) \ge j - j'_2 +1.
\]
Hence
\begin{equation}\label{eq:j2j}
\# (E_2\cap (j'_2, j)) \ge j - j'_2,\qquad j'_2 <j \le i_2.
\end{equation}
On the other hand, since
$E_2\cap [i_2,i'_2]$
is a sampling sequence for $V_m|_{[i_2,i'_2]}$, we have
\begin{equation}\label{eq:e2j:1}
\# (E_2\cap (j'_2, j)) \ge j - j'_2,\qquad j'_2 <j \le i'_2.
\end{equation}
By the choice of $i_2$,
$E_2\cap [j'_2, i'_2]$ is not a sampling sequence
for $V_m|_{[j'_2,i'_2]}$.
Since
\begin{eqnarray*}
 \#(E_2\cap (j'_2, i'_2])
   &= &  \#(E_2\cap (j'_2, i_2)) +\#(E_2\cap [i_2, i'_2])  \\
   &\ge& i'_2-j'_2+m,
\end{eqnarray*}
by Proposition~\ref{prop:local sampling}, there are two cases.

(a)\,\, There exist integers $l,l'\in [j'_2, i'_2]$ such that $l<l'$ and
$\#(E_2\cap(l, l')) \le l'-l-m-1$.

Recall that $E_2\cap[i_2,i'_2]$ is a sampling sequence for $V_m|_{[i_2,i'_2]}$.
We have $l<i_2$.
First, we assume that $l'> i_2$.
Since $\#(E_2\cap[i_2,l')) \ge l'-i_2$,
we have
\[
  \#(E_2\cap(l, i_2)) \le i_2-l -m-1.
\]
It follows from (\ref{eq:j2j}) that
\[
  \# (E_2\cap (j'_2, l)) \ge  l-j'_2+m.
\]
By Lemma~\ref{Lm:samp}, there exist integers
$l_2, l'_2\in [j'_2, l]$  such that $l_2<l'_2$ and   $E_2\cap [l_2, l'_2]$ is sampling sequence for $V_m|_{[l_2, l'_2]}$,
which contradicts with the choice of $i_2$.

Next we consider the case of $l'\le i_2$.
We see from (\ref{eq:dj2}) that
\begin{eqnarray*}
\#(E_2\cap (l,i_2))
&=& \#(E_2\cap (l,l')) + \#(E_2\cap [l',i_2)) \\
&\le& l'-l-m-1 +  i_2 - l' = i_2-l-m-1.
\end{eqnarray*}
With similar arguments we get a contradiction.

(b).\,\,  There exists some inter $l\in [j'_2,  i'_2)$ such that
$\#(E_2\cap(l, i'_2]) \le i'_2-l-1$.

In this case, we have $l<i_2$
and
\begin{eqnarray*}
\#(E\cap(l,i_2)) &=& \#(E\cap(l,i'_2]) - \#(E\cap[i_2,i'_2]) \\
&\le& i_2-l-m-1.
\end{eqnarray*}
Again, with similar arguments we get a contradiction. This completes the proof.
\end{proof}

The following is a simple application of Lemma~\ref{Lm:L3a}.

\begin{Lemma}\label{Lm:L3}
Let $E\subset [N_1, N_2]$ be a sequence consisting of distinct points which satisfies (\ref{eq:PR:1}) - (\ref{eq:PR:4}).
Suppose that  $E_1\cup E_2=E$, $E_1\cap  E_2=\emptyset $, $\#E_1\le  \# E_2$, and that  $E_2$  is not a sampling sequence for
$\VN$.
Then there exist integers
 $i_1, i'_1, i_2, i'_2\in [N_1, N_2]$ such that
 $i_1<i'_1$, $i_2<i'_2$,  $[i_1, i'_1]$ and $[i_2, i'_2]$ have and only have one common point,
 and
$E_l\cap[i_l, i'_l]$  is a sampling sequence for   $V_m|_{[i_l,i'_l]}$, $l=1,2$.
\end{Lemma}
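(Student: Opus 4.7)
My plan is to apply Lemma~\ref{Lm:L3a} after locating two integer intervals $[n_1,n'_1]$ and $[n_2,n'_2]$ with $n_1<n'_1\le n_2<n'_2$, on which $E_1$ and $E_2$ respectively carry at least length-plus-$m$ points. From $\#E_1\le\#E_2$ and (\ref{eq:PR:1}) one has $\#E_2\ge N_2-N_1+m$, and since $E_2$ is not a sampling sequence for $\VN$, Proposition~\ref{prop:local sampling} forces at least one of (\ref{eq:c1})--(\ref{eq:c3}) to fail for $E_2$. I treat the three possible failures separately.

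Suppose (\ref{eq:c1}) fails for $E_2$, and let $k_0$ be the smallest $k\in[1,N_2-N_1]$ with $\#(E_2\cap[N_1,N_1+k))<k$. Subtracting from $\#(E\cap[N_1,N_1+k_0))\ge 2k_0+m-1$ (by (\ref{eq:PR:2})) yields $\#(E_1\cap[N_1,N_1+k_0])\ge k_0+m$, while $\#(E_2\cap[N_1+k_0,N_2])\ge\#E_2-(k_0-1)\ge N_2-(N_1+k_0)+m+1$. Hence with $(n_1,n'_1,n_2,n'_2)=(N_1,N_1+k_0,N_1+k_0,N_2)$ the hypotheses of Lemma~\ref{Lm:L3a} (whose density condition $\#(E\cap(i,j))\ge 2(j-i)-1$ on $[N_1,N_2]$ is precisely (\ref{eq:PR:4})) are met, and the conclusion furnishes $i_1<i'_1=i_2<i'_2$ with $E_l\cap[i_l,i'_l]$ sampling for $V_m|_{[i_l,i'_l]}$. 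The case that (\ref{eq:c2}) fails is handled symmetrically, with $E_2$ on the left and $E_1$ on the right (and a relabeling of indices at the end).

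Now assume (\ref{eq:c1}) and (\ref{eq:c2}) hold for $E_2$, but $\#(E_2\cap(a,b))\le b-a-m-1$ for some $N_1\le a<b\le N_2$. Using (\ref{eq:c1}) and (\ref{eq:c2}), one quickly verifies $N_1<a<b<N_2$ and $b-a\ge m+1$. Subtracting from (\ref{eq:PR:4}) gives $\#(E_1\cap[a,b])\ge b-a+m$, and
\[
   \#(E_2\cap[N_1,a])+\#(E_2\cap[b,N_2])\ge (N_2-N_1+m)-(b-a-m-1)=(a-N_1)+(N_2-b)+2m+1.
\]
Pigeonhole therefore forces $\#(E_2\cap[N_1,a])\ge a-N_1+m+1$ or $\#(E_2\cap[b,N_2])\ge N_2-b+m+1$. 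In the first subcase, $E_2$ and $E_1$ carry enough mass on $[N_1,a]$ and $[a,b]$ respectively, so Lemma~\ref{Lm:L3a} applied with $E_2$ in the left role and $E_1$ in the right role, using $(n_1,n'_1,n_2,n'_2)=(N_1,a,a,b)$, yields two sampling intervals. In the second subcase one applies it with $E_1$ on the left and $E_2$ on the right using $(a,b,b,N_2)$. Relabeling $i_1,i'_1,i_2,i'_2$ so that $[i_l,i'_l]$ is the interval associated with $E_l$ produces two intervals sharing exactly one endpoint, as required.

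The main obstacle is the third case, where one must coordinate the guaranteed abundance of $E_1$ in the middle gap with the less-obvious abundance of $E_2$ on at least one flank; the displayed pigeonhole inequality is the essential tool. The first two cases are easier because the failure of (\ref{eq:c1}) or (\ref{eq:c2}) for $E_2$ on one side automatically supplies both the surplus of $E_1$ there (via (\ref{eq:PR:2}) or (\ref{eq:PR:3})) and the surplus of $E_2$ on the opposite side (via $\#E_2\ge N_2-N_1+m$).
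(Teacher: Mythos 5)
Your proposal is correct and follows essentially the same route as the paper's proof: the same three-way case split from Proposition~\ref{prop:local sampling} applied to $E_2$, the same use of (\ref{eq:PR:2})--(\ref{eq:PR:4}) together with $\#E_2\ge N_2-N_1+m$ to produce the two mass conditions, and the same final appeal to Lemma~\ref{Lm:L3a} (your pigeonhole display in the third case is exactly the paper's ``otherwise $\#E_2\le N_2-N_1+m-3$'' contradiction). The only cosmetic difference is that you feed the mass bounds directly into Lemma~\ref{Lm:L3a} instead of first extracting sampling subintervals via Lemma~\ref{Lm:samp}, which Lemma~\ref{Lm:L3a} does internally anyway.
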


\begin{proof}
Since $\# E_2 \ge \#E_1$, we see from (\ref{eq:PR:1}) that
\begin{equation}\label{eq:aa:e1}
\#E_2 \ge N_2-N_1+m.
\end{equation}
Since
$E_2$  is not a sampling sequence for
$\VN$, by Proposition~\ref{prop:local sampling}, there are three cases.

(i) There is some integer $k\in [1, N_2-N_1]$ such that $\#(E_2\cap[N_1, N_1+k)) \le k-1$.

In this case, we see from (\ref{eq:PR:2}) that
\begin{eqnarray*}
 \#(E_1\cap[N_1, N_1+k)) &=& \#(E\cap[N_1, N_1+k)) - \#(E_2\cap[N_1, N_1+k)) \\
   &\ge& k+m.
\end{eqnarray*}
By Lemma~\ref{Lm:samp}, there exist integers $n_1, n'_1$ such that $N_1\le n_1<n'_1\le N_1+k$ and
$E_1\cap [n_1,n'_1]$ is a sampling sequence for $V_m|_{[n_1,n'_1]}$.

On the other hand, since
\begin{eqnarray*}
 \#(E_2\cap[N_1+k, N_2]) &=& \#(E_2\cap[N_1, N_2]) - \#(E_2\cap[N_1, N_1+k)) \\
   &\ge& N_2 - N_1-k+m+1,
\end{eqnarray*}
Using Lemma~\ref{Lm:samp} again, we get some integers $n_2, n'_2$ with
$N_1+k\le n_2<n'_2\le N_2$ such that
$E_2\cap [n_2,n'_2]$ is a sampling sequence for $V_m|_{[n_2,n'_2]}$.
By Lemma~\ref{Lm:L3a}, we get the conclusion as desired.

(ii) There is some integer $k\in [1, N_2-N_1]$  such that $\#(E_2\cap(N_2-k, N_2]) \le k-1$.

Similarly to the first case we can prove the conclusion.

(iii) There exist integers $k_1, k_2\in[N_1, N_2] $ such that $k_1<k_2$  and $\#(E_2\cap(k_1, k_2)) \le k_2-k_1-m-1$.

In this case, we have
\[
  \#(E_1\cap (k_1,k_2)) \ge k_2-k_1+m.
\]
Hence there exist integers $n_1, n'_1\in [k_1, k_2]$ such that $n_1 < n'_1$ and
$E_1\cap [n_1,n'_1]$ is a sampling sequence for $V_m|_{[n_1,n'_1]}$.

On the other hand,
If $\#(E_2\cap [N_1, k_1]) \le  k_1-N_1+m-1$ and
$\#(E_2\cap [k_2, N_2]) \le N_2-k_2 +m-1$, then
\begin{eqnarray*}
\#(E_2\cap[N_1, N_2])
\le N_2-N_1+m-3,
\end{eqnarray*}
which contradicts with (\ref{eq:aa:e1}).
Hence either $\# (E_2\cap[N_1, k_1])\ge k_1-N_1+m$
or $\#(E_2\cap[k_2, N_2]) \ge N_2-k_2+m$.
Consequently, we can find some integers $n_2<n'_2$ such that
$[n_2,n'_2]\subset [N_1, k_1]$
or $[n_2,n'_2]\subset [k_2, N_2]$
and $E_2\cap [n_2, n'_2]$ is a sampling sequence for $V_m|_{[n_2,n'_2]}$.
Again, the conclusion follows from Lemma~\ref{Lm:L3a}.
\end{proof}

%\begin{Lemma}\label{Lm:sep:b}
%Let $f_l = \sum_{n=N_1-m}^{N_2-1} c^{(l)}_n \varphi_m(x-n)\in \VN$, $l=1,2$.
%Suppose that there exist some $N_1 \le i_1<i'_1 =i_2<i'_2\le N_2$
%such that $c^{(1)}_n = c^{(2)}_n = 0$ for $i_2-m\le n\le i_2$,
%and $E\cap [N_1, i'_1]$ and $E\cap [i_2,N_2]$
%are sampling sequences for $V_m|_{[N_1,i'_1]}$
%and  $V_m|_{[i_2,N_2]}$, respectively.
%If there exist some $n_1\in [N_1, i_2-m]$
%and $n_2\in [i_2,N_2]$ such that
%$|c^{(1)}_{n_l}| + |c^{(2)}_{n_l}|\ne 0$ for $l=1,2$,
%then both $f_1$ and $f_2$ are separable.
%\end{Lemma}

We finish the proof of Theorem~\ref{thm:PR} by presenting a slightly stronger result,
of which the sufficiency of Theorem~\ref{thm:PR} is a consequence.

\begin{Lemma}\label{Lm:L4}
Suppose that $E=\{x_i:\, 1\le i\le N\}$ satisfies (\ref{eq:PR:1}) - (\ref{eq:PR:4}).
Let $f_i = \sum_{n=N_1-m}^{N_2-1} c^{(i)}_n \varphi_m(\cdot-n)\in V_m$ ($i=1,2$)  be such that
\begin{equation}\label{eq:f12i}
|f_1(x_k)| = |f_2(x_k)|, \qquad 1\le k\le N.
\end{equation}
Then $|f_1(x)|=|f_2(x)|$ on $[N_1, N_2]$. Moreover,
if $f_1\ne \pm f_2$,  then
there exist integers $n_1, n_2\in [N_1, N_2]$  such that $n_2-n_1\ge m+1$,
\begin{equation}\label{eq:cn}
c^{(i)}_n=0, \quad  n_1<n<n_2,\,\,  i=1,2,
\end{equation}
and there exist $j^{(1)}_1, j^{(2)}_1 \le n_1$ and
$j^{(1)}_2, j^{(2)}_2 \ge n_2$ satisfying that
none of $c^{(1)}_{j_1}$, $c^{(1)}_{j_2}$, $c^{(2)}_{j_1}$ and $c^{(2)}_{j_2}$ is zero.
\end{Lemma}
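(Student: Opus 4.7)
To prove the lemma I would first encode the phase-matching condition as a product-zero condition. Setting $g_1=f_1+f_2$ and $g_2=f_1-f_2$, the hypothesis $|f_1(x_k)|=|f_2(x_k)|$ becomes $g_1(x_k)g_2(x_k)=0$ at every sample. I would partition $E=E_1\sqcup E_2$ with $g_l\equiv 0$ on $E_l$ (ties broken arbitrarily) and assume $\#E_1\le\#E_2$. If $E_2$ is a local sampling sequence for $\VN$, Proposition~\ref{prop:local sampling} forces $g_2\equiv 0$ on $[N_1,N_2]$, so $f_1=f_2$ and both conclusions are immediate (the ``moreover'' clause being vacuous). Otherwise, Lemma~\ref{Lm:L3} supplies integers $i_1<i_1'=i_2<i_2'$ in $[N_1,N_2]$ for which $E_l\cap[i_l,i_l']$ is a sampling sequence for $V_m|_{[i_l,i_l']}$, giving $g_1\equiv 0$ on $[i_1,i_1']$ and $g_2\equiv 0$ on $[i_1',i_2']$.

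Translating these vanishings into coefficient equalities by linear independence of the local B-spline basis yields $c_n^{(1)}+c_n^{(2)}=0$ for $n\in[i_1-m,i_1'-1]$ and $c_n^{(1)}-c_n^{(2)}=0$ for $n\in[i_1'-m,i_2'-1]$; on the overlap $[i_1'-m,i_1'-1]$ this forces $c_n^{(1)}=c_n^{(2)}=0$, producing a block of $m$ consecutive zero coefficients common to $f_1$ and $f_2$. This lets me split $f_i=f_i^L+f_i^R$, with $f_i^L$ collecting the terms of index $\le i_1'-m-1$ and $f_i^R$ those of index $\ge i_1'$; since $\varphi_m(\cdot-n)$ is supported in $[n,n+m+1]$ and vanishes at its endpoints, the supports of $f_i^L$ and $f_i^R$ meet only at $\{i_1'\}$ where both functions are zero, so $f_i^L\cdot f_i^R\equiv 0$. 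At this stage $|f_1|=|f_2|$ on $[i_1,i_2']$ from $f_1^L=-f_2^L$ on $[i_1,i_1']$ and $f_1^R=f_2^R$ on $[i_1',i_2']$.

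To extend the equality of moduli to all of $[N_1,N_2]$, I would induct on $N_2-N_1$. The base case $N_2-N_1=1$ reduces $\VN$ to polynomials of degree $\le m$; with $\#E\ge 2m+1$, the polynomial $(f_1-f_2)(f_1+f_2)$ has degree $\le 2m$ and vanishes at more than $2m$ points, forcing $f_1=\pm f_2$. The inductive step applies the statement to the pairs $(f_1^L,f_2^L)$ on $[N_1,i_1']$ and $(f_1^R,f_2^R)$ on $[i_1',N_2]$ with sample sets $E\cap[N_1,i_1']$ and $E\cap[i_1',N_2]$. For the ``moreover'' clause, I would take $n_1$ to be the largest index in $[N_1,i_1'-m-1]$ with $c_{n_1}^{(1)}\ne 0$ or $c_{n_1}^{(2)}\ne 0$, and $n_2$ the smallest such index in $[i_1',N_2-1]$; the assumption $f_1\ne\pm f_2$ prevents either half from collapsing to zero and supplies the flanking nonzero coefficients $j_l^{(i)}$.

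The main obstacle will be the inductive step: conditions (\ref{eq:PR:2}) and (\ref{eq:PR:4}) transfer at once from $[N_1,N_2]$ to $[N_1,i_1']$ and $[i_1',N_2]$, but the right-endpoint condition (\ref{eq:PR:3}) need not hold at the new right endpoint $i_1'$ of the left subproblem, and symmetrically at $i_1'$ for the right subproblem. To circumvent this, I would exploit the structural fact that $f_i^L$ has its top $m$ B-spline coefficients already forced to zero, so $f_i^L$ actually lies in the image of $V_m|_{[N_1,i_1'-m]}$ under natural spline extension; invoking the inductive hypothesis on the genuinely smaller ambient interval $[N_1,i_1'-m]$ bypasses the missing density condition at $i_1'$, with the symmetric trick handling the right piece.
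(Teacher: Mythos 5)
Your setup tracks the paper's proof closely: decomposing $E$ by the sign pattern of $f_1\pm f_2$, invoking Lemma~\ref{Lm:L3} to produce adjacent intervals $[i_1,i_1']$ and $[i_1',i_2']$ on which $f_1+f_2$ and $f_1-f_2$ respectively vanish, and extracting the block $c^{(1)}_n=c^{(2)}_n=0$ for $i_1'-m\le n\le i_1'-1$ are all exactly what the paper does, and your degree-count base case at $N_2-N_1=1$ is a clean alternative to the paper's base case at $N_2-N_1=2$. But the fix you propose for the obstacle you correctly identify does not work. Shrinking the ambient interval from $[N_1,i_1']$ to $[N_1,i_1'-m]$ merely relocates the missing condition: (\ref{eq:PR:3}) relative to the new right endpoint $i_1'-m$ demands $\#\big(E\cap(i_1'-m-k,\,i_1'-m]\big)\ge 2k+m-1$, while (\ref{eq:PR:4}) only supplies $2k-1$ points in the corresponding open interval, so the deficit of $m$ points per window is unchanged. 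Worse, passing to $V_m|_{[N_1,i_1'-m]}$ forces you to discard the samples in $(i_1'-m,i_1']$, which still carry information about $f_i^L$, whose support reaches $i_1'$. The paper resolves this in the opposite direction: it \emph{adds} $m+1$ auxiliary points $y_1,\dots,y_{m+1}\in(i_2-1,i_2)\setminus E$ to $E_1$. Because $E_1\cap[i_1,i_1']$ is a sampling sequence for $V_m|_{[i_1,i_1']}$ and $f_1+f_2$ vanishes on it, $f_1+f_2\equiv 0$ on all of $[i_1,i_1']$, so the new points still lie in the zero set of $f_1+f_2$ and the sign decomposition survives, while the extra points restore (\ref{eq:PR:3}) at the new endpoint $i_2$ so that the inductive hypothesis genuinely applies. (A companion trick, adjoining $2m+3$ points of $[i_l,i_l']$ at which $|f_1|=|f_2|$ is already known, is how the paper propagates $|f_1|=|f_2|$ to all of $[N_1,N_2]$.)

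There is a second gap, in the ``moreover'' clause. From $f_1\ne\pm f_2$ you only get $f_1+f_2\not\equiv 0$ and $f_1-f_2\not\equiv 0$; it is entirely possible that every nonzero coefficient of both $f_1+f_2$ and $f_1-f_2$ lies on the same side of the block of zeros, in which case $f_1$ and $f_2$ vanish identically on the other side and your chosen $n_1,n_2$ have no flanking nonzero coefficients there. So the assertion that ``$f_1\ne\pm f_2$ prevents either half from collapsing to zero'' is false as stated. The paper splits into two cases: when indices $k_1,k_2$ with $c_{k_1}+c'_{k_1}\ne0$ and $c_{k_2}-c'_{k_2}\ne0$ straddle $i_2-1$, it shows directly, using that $E\cap[N_1,i_1']$ and $E\cap[i_2,N_2]$ are sampling sequences for the corresponding restricted spaces, that $f_1$ and $f_2$ are both nonzero on each side; when they do not straddle, it recurses into the smaller interval containing them via the auxiliary-point construction above, and the gap with two-sided nonzero coefficients is delivered by the inductive conclusion there rather than by the original gap at $i_1'$. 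Your writeup needs this case analysis to make the ``moreover'' part go through.
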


\begin{proof}
Suppose that $f_1\pm f_2\ne 0$.
Split $E$ into two subsequences $E_1$ and $E_2$ such that
\[
  E_1 = \{x_i:\, f_1(x_i) = -f_2(x_i)\}\quad \mathrm{and}\quad  E_2 = \{x_i:\, f_1(x_i) = f_2(x_i)\}.
\]
Since $\#E \ge 2(N_2-N_1+m)-1$, without loss of generality, we assume that
$\# E_2 \ge N_2-N_1+m$.

Since $f_1-f_2\ne 0$, $E_2$ is not a sampling sequence for $\VN$.
We see from   Proposition~\ref{prop:local sampling} that $N_2-N_1\ge 2$.
We prove the conclusion with induction on $N_2-N_1$.

First, we consider the case of $N_2-N_1=2$.
Since $E_2$ is not a sampling sequence for $\VN$, we see from Proposition~\ref{prop:local sampling}
that either $E_2\cap [N_1, N_1+1)=\emptyset$ or $E_2\cap (N_1+1, N_2]=\emptyset$.
Without loss of generality, assume that $E_2\cap [N_1, N_1+1)=\emptyset$.

Since  $f_1 \pm f_2 \ne 0$, there are some $c, c'\in \bbR^{2+m}\setminus\{0\}$ such that
\begin{equation} \label{eq:f12}
\left\{
\begin{aligned}
f_1(x) + f_2(x) &= \sum_{n=N_1-m}^{N_2-1} c_n \varphi_m(x-n), \\
f_1(x) - f_2(x) &= \sum_{n=N_1-m}^{N_2-1} c'_n \varphi_m(x-n).
\end{aligned}\right.
\end{equation}
Since  $\#(E_2\cap[N_1+1,N_2]) = \#(E_2\cap[N_1,N_2]) \ge 2+m$,
$E_2\cap[N_1+1, N_2]$ is a sampling sequence for $V_m|_{[N_1+1,N_2]}$.
We see from $f_1(x_i) = f_2(x_i)$ for $x_i\in E_2$ that
$f_1(x)=f_2(x)$ on $[N_1+1,N_2]$ and
\[
  c'_n=0,\qquad N_2-m-1=N_1-m+1 \le n\le N_1+1 = N_2-1.
\]
But $f_1\ne  f_2$. Hence $c'_{N_1-m}\ne 0$.

On the other hand, since    $\#(E_1\cap[N_1 , N_1+1)) = \#(E\cap[N_1,N_1+1)) \ge 1+m$,
$E_1\cap [N_1, N_1+1]$ is a sampling sequence for $V_m|_{[N_1,N_1+1]}$.
Now we see from  $f_1(x_i) = -f_2(x_i)$ for $x_i\in E_1$ that
$f_1(x)=-f_2(x)$ on $[N_1,N_1+1]$ and
\[
  c_n=0,\qquad N_1-m \le n\le N_1.
\]
But $f_1\ne -f_2$. Hence $c_{N_1+1}\ne 0$.
Therefore,
\[
   c_n=c'_n = 0, \qquad  N_1-m+1 \le n\le N_1
\]
and
\[
   c_n\pm c'_n \ne 0, \qquad  n=N_1-m \, \mathrm{or}\, N_2-1.
\]
By Lemma~\ref{Lm:sep}, both $f_1$ and $f_2$ are separable.
Consequently, the conclusion is true for $N_2-N_1=2$.

Now suppose that for some $N_0\ge 2$ and any integers $N_1< N_2$ with $N_2-N_1\le N_0$ the conclusion is true.
Let us consider the case of $N_2-N_1=N_0+1$.

Since $E_2$ is not a local sampling sequence for $\VN$,
by Lemma~\ref{Lm:L3}, there exist integers
$i_1, i'_1, i_2, i'_2\in [N_1, N_2]$ such that
$i_1<i'_1$, $i_2<i'_2$,  $[i_1, i'_1]$ and $[i_2, i'_2]$ have and only have one common point,
 and
$E_1\cap[i_1, i'_1]$  and
$E_2\cap[i_2, i'_2]$
are sampling sequences for   $V_m|_{[i_1,i'_1]}$
and $V_m|_{[i_2,i'_2]}$, respectively.
Without loss of generality, we assume that $i'_1=i_2$.

Take some $c,c'\in\bbR^{N_2-N_1+m}$ such that (\ref{eq:f12}) holds.
Since $E_1\cap [i_1, i'_1]$ is a sampling sequence for $[i_1,i'_1]$ and
$f_1(x_i) = -f_2(x_i)$ for $x_i\in E_1$, we have
\begin{equation}\label{eq:cni}
c_n = 0,\qquad i_1-m \le n\le i'_1-1.
\end{equation}
Similarly we get
\[
   c'_n = 0,\qquad i_2-m \le n\le i'_2-1.
\]
Hence
\[
  c_n=c'_n = 0,\qquad i'_1-m \le n\le i'_1-1 = i_2-1.
\]
Set $n_1 = i_2-m-1$ and $n_2=i_2$. Then we get (\ref{eq:cn}).

Since $f_1 \pm f_2\ne 0$, neither $f_1$ nor $f_2$ is zero.
Otherwise, we see from $|f_1(x_i)|=|f_2(x_i)|$ that both are zeros.
Hence there exist some $k_1, k_2$ such that $c_{k_1}+c'_{k_1}\ne 0$ and $c_{k_2} - c'_{k_2}\ne 0$.
Without loss of generality, assume that $k_1\le k_2$.

First, we assume that $k_1\le i_2-1\le k_2$.
Then we have $k_1 \le i'_1-m-1$ and $k_2 \ge i'_1=i_2$.

Note that $f_1 = \sum_{n=N_1-m}^{N_2-1} \frac{c_n+c'_n}{2} \varphi_m(\cdot-n)$.
We have  $f_1|_{[N_1,i'_1]}\ne 0$.
Since $E\cap [N_1,i'_1]$ is a sampling sequence for $V_m|_{[N_1,i'_1]}$,
we see from (\ref{eq:f12i}) that $f_2|_{[N_1,i'_1]}\ne 0$.
Similarly we can prove that $f_i|_{[i_2,N_2]} \ne 0$, $i=1,2$.
Hence we can find integers $j^{(i)}_l$, $1\le i,l\le 2$, as desired.

Next we assume that for any $k_1,k_2$ with
$c_{k_1}+c'_{k_1}\ne 0$ and $c_{k_2} - c'_{k_2}\ne 0$,
$k_1$ and $k_2$ lie on the same side of $i_2-1$.
Without loss of generality, we assume that $k_1, k_2\le i_2-1$. Then we have
\[
  c_n=c'_n = 0,\qquad n\ge i_2-m.
\]
Hence
\[
  \left\{
\begin{aligned}
f_1(x) + f_2(x) &= \sum_{n=N_1-m}^{i_2-m-1} c_n \varphi_m(x-n), \\
f_1(x) - f_2(x) &= \sum_{n=N_1-m}^{i_2-m-1} c'_n \varphi_m(x-n).
\end{aligned}\right.
\]
Take $m+1$ points $y_1, \ldots, y_{m+1}$ in $(i_2-1,i_2)\setminus E$
and set $\tilde E = (E\cap[N_1, i_2])\cup \{y_i:\,1\le i\le {m+1}\}$.
Then $\tilde E$ meets (\ref{eq:PR:1}) - (\ref{eq:PR:4}) if
$(E, N_2)$ is replaced by $(\tilde E, i_2)$.

Let $\tilde E_1 = (E_1 \cap [N_1, i_2])\cup \{y_i:\,1\le i\le {m+1}\}$
and $\tilde E_2 = E_2\cap [N_1,i_2]$.
Then we have $\tilde E = \tilde E_1 \cup \tilde E_2$ and $f_1(x_i) = f_2(x_i)$ for $x_i\in \tilde E_2$.

On the other hand, since $E_1\cap [i_1, i'_1]$ is a sampling sequence for $V_m|_{[i_1,i'_1]}$
and $(f_1+f_2)|_{E_1} = 0$, we have $(f_1+f_2)|_{[i_1,i'_1]} = 0$.
Hence $f_1(y_i) + f_2(y_i) = 0$, $1\le i\le m+1$.
Therefore $(f_1+f_2)|_{\tilde E_1}=0$.
Since $i_2-N_1\le  N_2-N_1-1$, we see from the inductive assumption that
we can find integers $j^{(i)}_l$, $1\le i,l\le 2$, as desired.

Finally, we prove that $|f_1(x)|=|f_2(x)|$. We see from the above arguments that there exist integers $i_1<i'_1=i_2<i'_2$ such that
$E_l\cap [i_l,i'_l]$ is a sampling sequence for $V_m|_{[i_l,i'_l]}$, $l=1,2$.
Hence $f_1 = -f_2$ on $[i_1, i'_1]$
and  $f_1 = f_2$ on $[i_2, i'_2]$.
Take some $F_l\subset [i_l, i'_l]$ such that $\#F_l = 2m+3$.
Then $F_1\cup E\cap[N_1, i'_1]$ and $F_2\cup E\cap[i_2, N_2]$ are phaseless sampling sequences for $V_m|_{[N_1,i'_1]}$
and $V_m|_{[i_2,N_2]}$, respectively.
Since $i'_1-N_1 <N_2-N_1$ and $N_2-i_2<N_2-N_1$, we see from the inductive assumption that
$|f_1(x)|=|f_2(x)|$ on $[N_1, N_2]$.

By induction, the conclusion is true for any $N_1<N_2$. This completes the proof.
\end{proof}

\section{Global Phaseless Sampling in Spline Spaces}

In this section, we give a proof for Theorem~\ref{thm:global}.
First, we present some equivalent characterization for phaseless sampling sequences,
for which we leave the proof to interested readers.

\begin{Lemma} \label{Lm:s4:L1}
Suppose that $m\ge 2$. For a sequence $E$ satisfying (P1) in Theorem~\ref{thm:global}, (P2) is equivalent to any one of the followings,

\begin{enumerate}
\item For any $n_0\in\bbZ$,
\[
 \! \lim_{n\rightarrow \infty} (\#(E\cap [n_0, n]) - 2(n-n_0))
  \!=\!
  \lim_{n\rightarrow -\infty} (\#(E\cap [n, n_0]) - 2(n_0-n)) = \infty.
\]

\item For any $n_0\in\bbZ$,
\[
   \begin{aligned}
  &\sup_{n>n_0} \Big(\#(E\cap [n_0,n]) - 2(n-n_0)\Big)  \\
   &\hskip 10mm = \sup_{n<n_0} \Big(\#(E\cap [n,n_0]) - 2(n_0-n)\Big)=\infty.
 \end{aligned}
\]
\end{enumerate}

\end{Lemma}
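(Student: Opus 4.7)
The plan is to prove the cycle of implications $(P2) \Rightarrow (i) \Rightarrow (ii) \Rightarrow (P2)$. The step $(i) \Rightarrow (ii)$ is immediate, since a limit equal to $+\infty$ forces the supremum to be $+\infty$.

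For $(ii) \Rightarrow (P2)$: fix $n_0 \in \bbZ$. The hypothesis $\sup_{n > n_0}\bigl(\#(E\cap[n_0,n]) - 2(n-n_0)\bigr) = \infty$ produces some $N > n_0$ with $\#(E\cap[n_0,N]) - 2(N-n_0) \geq 2m - 1$, so that $n_1 := n_0$ and $n_2 := N$ satisfy the right half of (P2). The left-hand supremum produces the mirror pair $i_1 < i_2 \leq n_0$ in exactly the same way.

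The substantive content is $(P2) \Rightarrow (i)$. Fix $n_0$ and work out the direction $n \to +\infty$ (the $n \to -\infty$ side being symmetric). The idea is to apply (P2) iteratively: starting with $n_2^{(0)} := n_0 - 1$, for each $k \geq 1$ invoke (P2) at base point $n_2^{(k-1)} + 1$ to obtain integers $n_1^{(k)} < n_2^{(k)}$ with $n_1^{(k)} \geq n_2^{(k-1)} + 1$ and
\[
   \#(E\cap[n_1^{(k)}, n_2^{(k)}]) \geq 2(n_2^{(k)} - n_1^{(k)}) + 2m - 1.
\]
The blocks $[n_1^{(k)}, n_2^{(k)}]$ and the integer-endpoint gaps $(n_2^{(k-1)}, n_1^{(k)})$ between them are disjoint, and (P1) applied to each gap gives $\#(E\cap(n_2^{(k-1)}, n_1^{(k)})) \geq 2(n_1^{(k)} - n_2^{(k-1)}) - 1$. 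Summing the (P2) contributions with the (P1) gap contributions and telescoping the length pieces yields, for every $K \geq 1$,
\[
   \#(E\cap[n_0, n_2^{(K)}]) \geq 2(n_2^{(K)} - n_0) + K(2m - 2).
\]
One more application of (P1) to the interval $(n_2^{(K)}, n)$ for $n \geq n_2^{(K)}$ extends this to $\#(E\cap[n_0, n]) - 2(n - n_0) \geq K(2m - 2) - 1$ uniformly in $n \geq n_2^{(K)}$. Since $m \geq 2$ makes $2m - 2 \geq 2$, letting $K \to \infty$ forces the quantity in (i) to tend to $+\infty$.

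The essential structural point is that $m \geq 2$ makes the per-step excess $2m - 2$ strictly positive, so iteration of (P2) actually accumulates surplus; for $m = 1$ the gain from (P2) would exactly cancel the $-1$ slack in each (P1) estimate, which is consistent with (P2) being replaced by the different condition (P2') in that case. The main technical care lies in the telescoping bookkeeping and in the minor edge cases $n_1^{(1)} = n_0$ versus $n_1^{(1)} > n_0$, which only shift constants and do not alter the conclusion.
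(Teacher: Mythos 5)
The paper gives no proof of this lemma (it is explicitly ``left to interested readers''), so there is nothing to compare against; judged on its own, your argument is correct. The cycle $(P2)\Rightarrow(\mathrm{i})\Rightarrow(\mathrm{ii})\Rightarrow(P2)$ is the natural route, the closing step $(\mathrm{ii})\Rightarrow(P2)$ is right (take $n_1=n_0$, $i_2=n_0$ and use that an infinite supremum exceeds $2m-1$), and the iteration in $(P2)\Rightarrow(\mathrm{i})$ correctly accumulates a surplus of $2m-2>0$ per block, which is exactly where $m\ge 2$ enters. The only imprecision is in the bookkeeping of the first gap: with $n_2^{(0)}=n_0-1$ the interval $(n_2^{(0)},n_1^{(1)})$ contains points of $(n_0-1,n_0)$ that lie outside $[n_0,n_2^{(K)}]$, so you should instead apply (P1) to $(n_0,n_1^{(1)})$ (or note $n_1^{(1)}\ge n_0$ and drop that gap when $n_1^{(1)}=n_0$); as you anticipate, this only shifts the additive constant and leaves the divergence $\#(E\cap[n_0,n])-2(n-n_0)\ge K(2m-2)-1$ for $n\ge n_2^{(K)}$ intact.
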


\begin{proof}[Proof of Theorem~\ref{thm:global}]
\textcolor{blue}{Necessity}.\,\,  First, we show that $\#(E\cap(n,n+1))\ge 1$ for any $n\in\bbZ$.
Assume on the contrary that $\#(E\cap(n,n+1)) = 0$ for some $n\in\bbZ$.
Let
\begin{eqnarray*}
  f_1 &=& \sum_{k\ge n} \varphi_m(\cdot-k) + \sum_{k\le n-m} \varphi_m(\cdot-k), \\
  f_2 &=& \sum_{k\ge n} \varphi_m(\cdot-k) - \sum_{k\le n-m} \varphi_m(\cdot-k), \\
\end{eqnarray*}
Then we have
\begin{eqnarray*}
  f_1(x) &=& -f_2(x),\qquad x \le n, \\
  f_1(x) &=& f_2(x),\qquad x \ge n+1.
\end{eqnarray*}
Hence $|f_1(x)| = |f_2(x)|$ for $x\in E$. But $f_1 \pm f_2\ne 0$ and neither $f_1$ nor $f_2$ is separable,
which contradicts with the hypothesis.

Now assume that $\#(E\cap(i_1, i_2)) \le 2(i_2-i_1)-2$ for some $i_1<i_2-1$.
Let
\begin{eqnarray*}
  n_1 &=& \max\{n<i_2:\, \#(E\cap(n, i_2)) \le 2(i_2-n)-2 \}, \\
  n_2 &=& \min\{n>n_1:\, \#(E\cap(n_1, n)) \le 2(n-n_1)-2 \}.
\end{eqnarray*}
Then we have $i_1\le n_1<n_2\le i_2$ and (\ref{eq:n1n2:1}) - (\ref{eq:n1n2:5}) hold.
Hence there is some $E_2\subset E$ such that
\begin{eqnarray*}
 && E\cap ((-\infty, n_1]\cup[n_2,\infty)) \subset E_2, \\
 && E_2\cap (n,n+1) = 1, \qquad n_1\le n\le n_2-2, \\
 && E_2\cap (n_1,n_2) = n_2-n_1-1, \qquad n_1\le n\le n_2-2.
\end{eqnarray*}
Let $E_1 = E\setminus E_2$ and
\begin{eqnarray*}
A_1 &=& [\varphi_m(x_i-n)]_{x_i\in E_1, n_1\le n\le n_2-1}, \\
A_2 &=& [\varphi_m(x_i-n)]_{x_i\in E_2\cap(n_1,n_2), n_1-m\le n\le n_2-m-1}.
\end{eqnarray*}
Since $\#E_1 = n_2-n_1-1$, we see from Proposition~\ref{prop:Sch} that there is some $c\in \bbR^{n_2-n_1}$,
none of whose entries is zero, such that
\[
  A_1 c = 0.
\]
Similarly, there is some $c'\in \bbR^{n_2-n_1}$,
none of whose entries is zero, such that
\[
  A_2 c' = 0.
\]
Again, by multiplying a factor, we can assume that
$c_i\pm c'_i\ne 0$, $n_1\le i\le n_2-m-1$.
Let
\begin{eqnarray*}
  f_1 &=& \sum_{n=n_1}^{n_2-1} c_n \varphi_m(\cdot-n) + \sum_{n=n_1-m}^{n_2-m-1} c'_n \varphi_m(\cdot-n), \\
  f_2 &=& \sum_{n=n_1}^{n_2-1} c_n \varphi_m(\cdot-n) - \sum_{n=n_1-m}^{n_2-m-1} c'_n \varphi_m(\cdot-n).
\end{eqnarray*}
If $n_1\le n_2-m-1$, then both $f_1$ and $f_2$ are nonseparable.
If $n_1>n_2-m-1$, then $n_1 - (n_2-m-1) \le m-1$. Again,  both $f_1$ and $f_2$ are nonseparable.
Moreover,
\[
f_1(x) = \begin{cases}
-f_2(x), &\text{if $x\le n_1$ or $x\in E_1=E_1\cap(n_1,n_2)$}\\
f_2(x),  &\text{if $x\ge n_2$ or $x\in E_2\cap(n_1,n_2)$}.
\end{cases}
\]
Since $f_1\pm f_2\ne 0$, it is impossible to recover $f_1$ or $f_2$, which contradicts with the hypothesis.
Hence (P1) is true.

\textcolor{blue}{Next we prove (P2) for $m\ge 2$}.
Assume that for some $n_0$ and any $n_2>n_1\ge n_0$,
$\#(E\cap [n_1, n_2]) \le 2(n_2-n_1+m)-2$. Then we have
\[
  K:= \sup_{n>n_0} \#(E\cap [n_0,n]) - 2(n-n_0)<\infty.
\]
Otherwise, there is some $n>n_0$ such that $\#(E\cap [n_0,n]) - 2(n-n_0)>  4m$.
Hence either $\#(E\cap [n_0,n_0+1]) \ge 2m+1$ or
$\#(E\cap [n_0+1,n]) \ge 2(n-n_0-1)+2m-1$, which contradicts with the assumption.

It follows that  there exists some $n_2>n_0$ such that
\[
  K = \#(E\cap [n_0,n_2]) - 2(n_2-n_0).
\]
For $n>n_2$, we have
\[
  \#(E\cap (n_2, n]) \le 2(n-n_2).
\]
Moreover,
\begin{equation}\label{eq:s5:e1}
\#(E\cap(n, n+1]) \le 3,\qquad n\ge n_2.
\end{equation}
Otherwise, we have
\[
  \#(E\cap(n_2,n+1]) = \#(E\cap(n_2,n]) + \#(E\cap(n,n+1]) \ge 2(n-n_2)+3.
\]
Hence
\[
  \#(E\cap[n_0,n+1]) - 2(n+1-n_0) \ge K+1,
\]
which is impossible.

Let  $a_i = \#(E\cap (i-1,i])$, $i\ge n_2+1$. Then we have $1\le a_i\le 3$.
There are three cases.

(i).\,\, There are infinitely many $i> n_2$ such that $a_i=3$.

Suppose that $a_{i_k}=3$ for $k\ge 1$ and $a_i\le 2$ for $i\ge n_2+1$ and $i\not\in\{i_k:\,k\ge 1\}$.
Set $i_0 = n_2$.
We conclude that
$i_k - i_{k-1} \ge 2$ for $k\ge 1$.

In fact, if $i_1 = n_2+1$, then we have $\#(E\cap [n_0, i_1]) - 2(i_1-n_0)=K+1$,  which is impossible.
On the other hand,  if $i_k = i_{k-1}+1$ for some $k\ge 2$, then
\begin{eqnarray*}
\#(E\cap [n_0, i_k]) &=& \#(E\cap[n_0,n_2]) + \#(E\cap(n_2,i_{k-1}-1]) \\
&&\qquad + \#(E\cap(i_{k-1}-1,i_k]) \\
              &\ge& K + 2(n_2-n_0)+ 2(i_{k-1} - 1-n_2) -1 + 6 \\
              &=& K + 2(i_k -n_0)+1.
\end{eqnarray*}
Again, we get a contradiction.

Observe that
\begin{eqnarray*}
  \#(E\cap (i_{k-1},i_k])
  &=& \#(E\cap(i_{k-1},i_k-1]) + \#(E\cap(i_k-1,i_k]) \\
  &\ge& 2(i_k-i_{k-1}).
\end{eqnarray*}
It is easy to check  by induction that for $k\ge 1$,
\begin{equation}\label{eq:s5:e3}
  \#(E\cap (i_{k-1},i_k]) = 2(i_k-i_{k-1}).
\end{equation}

(ii).\,\, There are only finitely many $i> n_2$ such that $a_i=3$.

Suppose that $a_{i_1}$, $\ldots$,  $a_{i_r}=3$. Let $i_0=n_2$ and $i_k = i_r+2(k-r)$ for $k>r$.
Similarly we can show that (\ref{eq:s5:e3}) is true for $1\le k\le r$.  Hence
\begin{equation}\label{eq:s5:e4}
 2(i_k-i_{k-1})-1 \le \#(E\cap (i_{k-1},i_k]) \le  2(i_k-i_{k-1}),\qquad k\ge 1.
\end{equation}

(iii).\,\, $a_i \le 2$ for $i > n_2$.

In this case, set $i_k = n_2 + 2k$, $k\ge 0$. Then (\ref{eq:s5:e4}) is true.

In all three cases, we get $n_2=i_0<i_1<\ldots<i_k<\ldots$ such that $i_k-i_{k-1} \ge 2$ and (\ref{eq:s5:e4}) is true.

Take some $E_2\subset E$ such that
\begin{eqnarray}
 && \#(E_2\cap (n,n+1)) \ge 1, \qquad n\in\bbZ,   \label{eq:s5:e6} \\
 && \#(E_2\cap (i_0,i_1]) = \#(E_2\cap [i_0,i_1]) = i_1-i_0+1, \nonumber \\
 && \#(E_2\cap (i_{k-1},i_k]) = i_k-i_{k-1},\qquad k\ge 2,\nonumber \\
 && \#(E_2\cap [n,i_0)) = i_0-n,\qquad n<i_0,\nonumber \\
 && E\cap (i_0, i_0+1] \subset E_2. \nonumber
\end{eqnarray}
Let $E_1= E\setminus E_2$. Then
\begin{eqnarray}
 && \#(E_1\cap(i_0,i_0+1]) = 0,  \label{eq:s5:e6a}  \\
 && \#(E_1\cap(i_0,i_1]) \le  i_1-i_0-1,  \label{eq:s5:e6b} \\
 && \#(E_1\cap(i_{k-1},i_k]) \le  i_k-i_{k-1},\qquad k\ge 2.  \label{eq:s5:e6c}
\end{eqnarray}

Since $\#(E_2\cap [i_0,i_1]) = i_1-i_0+1 < i_1-i_0+m$,
there exist some $c'_{i_0-m}$, $\ldots$, $c'_{i_1-1}\in\bbR$, not all of which are zeros, such that
\[
  h_2(x_i) = \sum_{n=i_0-m}^{i_1-1} c'_n \varphi_m(x_i-n) = 0,\qquad x_i\in E_2\cap    [i_0, i_1].
\]

Consider the linear system
\[
  h_2(x_i) = \sum_{n=i_0-m}^{i_2-1} c'_n \varphi_m(x_i-n) = 0,\qquad x_i\in E_2\cap (i_1, i_2],
\]
where $c'_n$, $i_1\le n\le i_2-1$ are considered as unknowns.
Denote  $E_2\cap (i_1, i_2] = \{y_i:\, 1\le i\le i_2-i_1\}$. Then the above system can be rewritten as
\[
  \begin{pmatrix}
   * & 0    & \ldots & 0 \\
   * & *    & \ldots & 0 \\
     &  \ldots  & \ldots & 0 \\
   ? & ?   & \ldots & *
\end{pmatrix}
 \begin{pmatrix}
   c'_{i_1} \\
   \vdots \\
   c'_{i_2-1}
 \end{pmatrix} =
  \begin{pmatrix}
   b_1 \\
   \vdots \\
   b_{i_2-i_1}
 \end{pmatrix}
\]
where $*$ stands for non-zero entries and $b_i = -\sum_{n=i_0-m}^{i_1-1} c'_n\varphi_m(y_i-n)$, $1\le i\le i_1-n_2$.
Hence there is a unique solution to the linear system. Consequently, for $x_i\in E_2\cap (i_1, i_2]$,
\[
  h_2(x_i) = \sum_{n=i_0-m}^{i_2-1} c'_n \varphi_m(x_i-n) = 0.
\]
Note that the above identity is also true for $x_i\in E_2\cap [i_0,i_1]$ since $\varphi_m(x_i-n)=0$ for $x_i\le i_1 \le n$.
By induction, it is easy to see that there exists a sequence of real numbers  $\{c'_n:\,n\in\bbZ\}$ such that
\begin{equation}\label{eq:h2}
h_2(x_i) = \sum_{n\in\bbZ} c'_n \varphi_m(x_i-n) = 0,\qquad x_i\in E_2.
\end{equation}

On the other hand,
by (\ref{eq:s5:e6a}) and (\ref{eq:s5:e6b}),
there exist some $c_{i_0}$, $\ldots$, $c_{i_1-1}\in\bbR$, not all of which are zeros, such that
\[
  h_1(x_i) = \sum_{n=i_0}^{i_1-1} c_n \varphi_m(x_i-n) = 0,\qquad x_i\in E_1\cap (i_0,i_1].
\]
Put $c_n=0$ for $n<i_0$. We get
\[
  h_1(x_i) = \sum_{n\le i_1-1} c_n \varphi_m(x_i-n) = 0,\qquad x_i\in E_1\cap (-\infty,i_1].
\]
Since $\#(E_1\cap(i_{k-1},i_k]) \le i_k - i_{k-1}$, $k\ge 2$,
similarly we can find some $c_n\in\bbR$ for $n\ge i_1$, which might not be unique, such that
\[
  h_1(x_i) = \sum_{n\in\bbZ} c_n \varphi_m(x_i-n) = 0,\qquad x_i\in E_1.
\]

For any $n\in\bbZ$ with $c'_n\ne 0$, there are at most two numbers $r$ such that $c_n +r c'_n = 0$ or $c_n -r c'_n = 0$.
Hence there is some $r\in [1,2]$ such  that $c_n\pm rc'_n\ne 0$ if $c'_n\ne 0$.
Set
\begin{eqnarray*}
f_1 &=& \sum_{n\in\bbZ}  \frac{c_n+rc'_n}{2} \varphi_m(\cdot-n), \\
f_2 &=& \sum_{n\in\bbZ}  \frac{c_n-rc'_n}{2} \varphi_m(\cdot-n).
\end{eqnarray*}
By (\ref{eq:s5:e6}),  there is not an integer  $n$ such that $c'_n = \ldots = c'_{n+m-1} = 0$. Otherwise, $c'_n=0$ for all $n\in\bbZ$.
The same is true for $c_n \pm rc'_n$. Hence both $f_1$ and $f_2$ are nonseparable. Since $f_1\pm f_2\ne 0$ and
$|f_1(x_i)| = |f_2(x_i)|$ for $x_i\in E$, we can not recover $f_1$ up to a sign, which contradicts with the hypothesis.
Similarly we can show that for any integer $n_0$, there exist integers  $i_1<i_2\le n_0$ such that
 $\#(E\cap [i_1, i_2]) \ge 2(i_2-i_1+m)-1$.
Hence (P2) is true.

\textcolor{blue}{It remains to prove (P2') for $m=1$.}
By (P1), there is some $E_2 = \{x_i:\, i\in\bbZ\}\subset E$ such that $x_i\in (i,i+1)$.
Let $E_1=E\setminus E_2$.
Then $E_1\ne \emptyset$.

Take some $c'_{-1}, c'_0\in\bbR\setminus\{0\}$ such that
\[
   c'_{-1} \varphi_1(x_0+1) + c'_0 \varphi_1(x_0) = 0.
\]
By induction, for any $n\in\bbZ$, we can find $c'_n\in\bbR$ successively such that
\[
  c'_{n-1} \varphi_1(x_n-n+1) + c'_n \varphi_1(x_n-n) = 0.
\]
It is easy to see that $c'_n\ne 0$, $n\in\bbZ$.
Set $h_2(x) =   \sum_{n\in\bbZ} c'_n \varphi_m(x-n)$. Then $h_2|_{E_2}=0$.

Denote $b_n = \#(E\cap [n-1,n])$, $n\in\bbZ$.
First, we show that there exists some $n\in\bbZ$ such that $b_n\ge 3$.

Assume on the contrary that $b_n\le 2$, $n\in\bbZ$.
Then $E_1\cap [n,n+1] \le 1$ for any $n\in\bbZ$.
Since $\#(E\cap(0,3))\ge 5$ and $\#(E_2\cap(0,3))=3$,
we have $\#(E_1\cap(0,3))\ge 2$. Moreover, since $b_n\le 2$, $1$ or $2$ is not in $E_1$.
Consequently, there is some $y_0\in E_1\cap(0,3)\setminus\bbZ$.
Since $\#(E_1\cap [n,n+1]) \le 1$, it is easy to see that there  is some $h_1\in V_1$ such that $h_1\ne0$ and $h_1|_{E_1} = 0$.
Since $h_2$ is nonseparable and $h_1, h_2\ne  0$, similarly to the proof of (P2) we
get a contradiction.

Take some $n_0\in \bbZ$ such that $b_{n_0}\ge 3$.
Denote $\{n:\, b_n\ge 3\}$ by $\{n_k:\, k_1\le k\le k_2\}$.

Suppose that $k_2<\infty$. Then $b_n\le 2$ for any $n >  n_{k_2}$.
If there is some $i_0\in E\cap [n_{k_2},\infty)\cap \bbZ$,   it follows from $b_{i_0+1}\le 2$ that
$\#(E\cap (i_0, i_0+1])=1$.
Hence $E_1\cap (i_0, i_0+1]=\emptyset $.
Since $\#(E\cap(i_0,i_0+2)) \ge 3$, there is some $y_0\in (i_0+1, i_0+2)\cap E_1$.
Hence there exist constants $c_n$, $n\ge i_0$, not all of which are zeros, such that
\[
  h_1(x) := \sum_{n\ge i_0} c_n \varphi_1(x-n) = 0
\]
holds for $x\in E_1\cap (i_0+1,\infty)$ and  therefore for all $x\in E_1$.
Similarly to the above  we
get a contradiction.
Hence $E\cap [n_{k_2},\infty )\cap \bbZ =  \emptyset$.

If $b_n = 1$ for some $n > n_{k_2}$, with similar arguments we get a contradiction.
Hence
\[
  \#(E\cap (n,n+1)) = 2, \qquad n\ge n_{k_2}.
\]
Similar arguments show that
$\#(E\cap (n-1,n) = 2$ for $n\le n_{k_1}$.

\textcolor{blue}{Sufficiency}.\,\,
Let $f_1, f_2\in V_m$ be such that $|f_1(x_i)|=|f_2(x_i)|$ for $x_i\in E$.
Assume that $f_1\pm f_2\ne 0$.
Then neither $f_1$ nor $f_2$ is equal to zero.
In fact, we see from (P1) that $E\cap [n_1, n_2]$ is a sampling sequence for $V_m|_{[n_1, n_2]}$ whenever $n_2-n_1\ge m+1$.
Since $|f_1(x)|=|f_2(x)|$ on $E$, if $f_1=0$, then we have $f_2=0$. Therefore, $f_1\pm f_2=0$, which is a contradiction.

First, we consider the case of $m\ge 2$.
Suppose that a sequence $E$  satisfies (P1) and (P2).
Define
\[
  E_1 = \{x_i:\, f_1(x_i) = -f_2(x_i)\}\quad \mathrm{and}\quad  E_2 = \{x_i:\, f_1(x_i) = f_2(x_i)\}.
\]

By (P2), there exists integers $n_i$ and $n'_i$, $i\in\bbZ$, such that
\[
   n_i<n'_i\le n_{i+1}<n'_{i+1},\qquad i\in\bbZ
\]
and
\[
 \#(E\cap [n_i, n'_i]) \ge 2(n'_i-n_i+m)-1.
\]
Hence  for any $i\in\bbZ$, either
$\#(E_1\cap [n_i, n'_i]) \ge n'_i-n_i+m $
or $\#(E_2\cap [n_i, n'_i]) \ge n'_i-n_i+m $.
There are two case.

(i)\,\, There exist  some $i\ne j$ such that
$\#(E_1\cap [n_i, n'_i]) \ge n'_i-n_i+m$ and $\#(E_2\cap [n_j, n'_j]) \ge n'_j-n_j+m$.

Without loss of generality, we assume that $i<j$.
We see from Lemma~\ref{Lm:L3a} that
there exist integers
$i_1< i'_1=i_2< i'_2$ such that
$E_l\cap[i_l, i'_l]$  is a sampling sequence for   $V_m|_{[i_l,i'_l]}$, $l=1,2$.

(ii)\,\,For  $l=1$ or $2$, $\#(E_l\cap [n_i, n'_i]) \ge n'_i-n_i+m$ for all $i\in\bbZ$.

Without loss of generality, assume that
$\#(E_2\cap [n_i, n'_i]) \ge n'_i-n_i+m$ for all $i\in\bbZ$.
By Lemma~\ref{Lm:samp},
there exists $[\tilde n_i, \tilde n'_i]\subset [n_i, n'_i] $
such that $E_2\cap [\tilde n_i, \tilde n'_i]$ is a sampling sequence
for $V_m|_{[\tilde n_i, \tilde n'_i]}$, $i\in\bbZ$.

If $E_2\cap [\tilde n_i, \tilde n'_j]$ is a sampling sequence for $V_m|_{[\tilde n_i, \tilde n'_j]}$ for any $i<j$, then we have
$f_1-f_2=0$ on $[\tilde n_i, \tilde n'_j ]$ for any $i<j$. Hence $f_1=f_2$, which contradicts with the assumption.
Consequently,  there exist some $i<j$ such that
$E_2\cap [\tilde n_i, \tilde n'_j]$ is not a sampling sequence for $V_m|_{[\tilde n_i,  \tilde n'_j]}$.
By Proposition~\ref{prop:local sampling}, there exist integers $l,l'$ with $\tilde n'_i\le  l<l' \le  \tilde n_j$ such that
$\#(E_2\cap(l,l')) \le l'-l-m-1$.
Since $\#(E\cap(l,l')) \ge 2(l'-l)-1$,
we have
$\#(E_1\cap(l,l')) \ge l'-l+m$.

Using Lemma~\ref{Lm:L3a} again, we get integers
$i_1< i'_1=i_2< i'_2$ such that
$E_l\cap[i_l, i'_l]$  is a sampling sequence for   $V_m|_{[i_l,i'_l]}$, $l=1,2$.

Now assume that
\begin{eqnarray*}
f_1 + f_2 &=& \sum_{n\in\bbZ} c_n \varphi_m(\cdot-k), \\
f_1 - f_2 &=& \sum_{n\in\bbZ} c'_n \varphi_m(\cdot-k).
\end{eqnarray*}
Since $E_l\cap[i_l, i'_l]$  is a sampling sequence for   $V_m|_{[i_l,i'_l]}$, $l=1,2$,
we have
\begin{eqnarray*}
  c_i = 0,  && i_1-m\le i\le i'_1-1, \\
  c'_i = 0,  && i_2-m\le i\le i'_2-1.
\end{eqnarray*}
Hence
\[
   c_i \pm c'_i =0,\qquad i_2-m \le i\le i_2-1.
\]
On the other hand, since $f_1, f_2 \ne 0$, there exist some $k_1, k_2$ such that
$c_{k_1}+c'_{k_1}\ne 0$ and $c_{k_2} - c'_{k_2}\ne 0$.
If $k_1$ and $k_2$ lie on the two sides of $i_2-1$, respectively, then  similar arguments as that in the proof of Lemma~\ref{Lm:L4}
  show that both $f_1$ and $f_2$ are separable.

Next we assume that both $k_1$ and $k_2$ are on the same side of $i_2-1$
whenever $c_{k_1}+c'_{k_1}\ne 0$ and $c_{k_2} - c'_{k_2}\ne 0$.
Without loss of generality, we assume that $c_i\pm c'_i=0$ for $i>i_2-1$
and $c_{k_0} + c'_{k_0}\ne 0$,  where $k_0 = \max\{i:\, |c_i + c'_i| + |c_i-c'_i|>0 \}$.
In this case,
\begin{eqnarray*}
f_1 + f_2 &=& \sum_{n\le k_0} c_n \varphi_m(\cdot-k), \\
f_1 - f_2 &=& \sum_{n\le k_0} c'_n \varphi_m(\cdot-k).
\end{eqnarray*}
Hence $f_1(x)=f_2(x)=0$ for $x\ge k_0+m+1$.

Take some $\{y_i:\, 1\le i\le 4m\}\subset [k_0+m+1, k_0+m+2]$.
Let $\tilde E_1 = (E_1\cap (-\infty, k_0+m+1) )\cup \{y_i:\,1\le i\le 2m\}$,
 $\tilde E_2 = (E_2\cap (-\infty, k_0+m+1) )\cup \{y_i:\, 2m+1\le i\le 4m\}$.
Then we have
$(f_1+f_2)|_{\tilde E_1} = 0$
and $(f_1-f_2)|_{\tilde E_2} = 0$.
Moreover, $\tilde E_l\cap [k_0+m+1,k_0+m+2] $ is a sampling sequence for $V_m|_{[k_0+m+1,k_0+m+2]}$, $l=1,2$.

Since $\#(\tilde E_l\cap (k_0+m,k_0+m+1)) \ge 1$ for $l=1$ or $2$,
one of $\tilde E_l\cap [k_0+m,k_0+m+2]$, $l=1,2$, is a sampling sequence for $V_m|_{[k_0+m,k_0+m+2]}$.
Without loss of generality, assume that
$\tilde E_2\cap [k_0+m,k_0+m+2]$ is a sampling sequence for $V_m|_{[k_0+m,k_0+m+2]}$.
Then we have $c'_{k_0} = 0$.

Repeating the previous arguments we get a sequence of integers
\[
  \ldots \le \tilde n_i < \tilde n'_i \le \ldots
\le \tilde n_0<\tilde n'_0 \le k_0+m
\]
such that for each $i\le 0$,
one of $E_l\cap [\tilde n_i, \tilde n'_i]$, $l=1,2$, is a sampling sequence for $V_m|_{[\tilde n_i, \tilde n'_i]}$.
We conclude that there exist some $l<l'\le k_0+m$  such that
$E_1\cap [l,l']$ is a sampling sequence for $V_m|_{[l, l']}$.

Assume on the contrary that $E_1\cap [l,l']$ is not a sampling sequence for $V_m|_{[l, l']}$ for any $l<l'\le k_0+m$.
Then $E_2\cap [\tilde n_i, \tilde n'_i]$ is a sampling sequence for $V_m|_{[\tilde n_i, \tilde n'_i]}$ for all $i\le 0$.
Moreover, by Lemma~\ref{Lm:samp},
\begin{equation}\label{eq:s4:e1}
   \#(E_1\cap(l,l')) \le l'-l+m-1, \qquad \forall l<l'\le k_0+m.
\end{equation}

If $E_2\cap [\tilde n_i, k_0+m+2]$ is a sampling sequence for $V_m|_{[\tilde n_i, k_0+m+2]}$ for all $i\le 0$,
then we have $(f_1-f_2)|_{[\tilde n_i, k_0+m+2]}=0$ for all $i\le 0$. Hence
$f_1(x) - f_2(x)=0$ for $x\le k_0+m+2$.   Therefore, $f_1-f_2=0$, which is impossible.
Hence there is some $i\le 0$ such that
$E_2\cap [\tilde n_i, k_0+m+2]$ is not a sampling sequence for $V_m|_{[\tilde n_i, k_0+m+2]}$.

Observe that
$E_2\cap [\tilde n_i, \tilde n'_i]$
and $E_2\cap [k_0+m, k_0+m+2]$
are sampling sequences for $V_m|_{[\tilde n_i, \tilde n'_i]}$
and $V_m|_{[k_0+m, k_0+m+2]}$, respectively.
By (\ref{eq:s4:e1}), we have
\begin{eqnarray*}
  \#(E_2\cap (\tilde n'_i, k_0+m)) &\ge&  2(k_0+m-\tilde n'_i)-1 - (k_0+m-\tilde n'_i+m-1) \\
  &=& k_0 -\tilde n'_i.
\end{eqnarray*}
Hence
\[
  \#(E_2\cap [\tilde n_i, k_0+m+2]) \ge k_0 - \tilde n_i +2m +2.
\]
By Proposition~\ref{prop:local sampling}, there exist integers $l<l'$ such that $\tilde n_i\le l<l'\le  k_0+m $
and $\#(E_2\cap (l,l'))\le l'-l-m-1$. Hence $\#(E_1\cap (l,l'))\ge l'-l+m$, which contradicts with (\ref{eq:s4:e1}).

It follows that  there exist some $l<l'\le k_0+m$  such that
$E_1\cap [l,l']$ is a sampling sequence for $V_m|_{[l, l']}$.
Since $E_2\cap[k_0+m, k_0+m+2]$ is a sampling sequence for $V_m|_{[k_0+m,k_0+m+2]}$,
we see from the proof of Lemma~\ref{Lm:L3a} that there exist integers $i_1<i'_1=i_2<i'_2$
such that $i_2\le k_0+m$ and
$E_l\cap[i_l, i'_l]$  is a sampling sequence for   $V_m|_{[i_l,i'_l]}$, $l=1,2$.
Hence
\[
  c_i \pm c'_i =0, \qquad i_2-m \le i\le i_2-1.
\]
Since $c_{k_0}+c'_{k_0}\ne 0$, we have $i_2 \le k_0$.
If there is some  $k_1<i_2-m$ such that
\begin{equation}\label{eq:s4:e2}
c_{k_1} + c'_{k_1}\ne 0\quad \mathrm{or}\quad c_{k_1} - c'_{k_1}\ne 0,
\end{equation}
then both $f_1$ and $f_2$ are separable.

In fact, we see from (P1) and (P2) that $E\cap [i, i_2]$ is a sampling sequence for $V_m|_{[i,i_2]}$
provided $i_2-i$ is large enough. Now we see from (\ref{eq:s4:e2})
that $f_l|_{[i,i_2]} \ne 0$, $l=1,2$. Similarly we can show that $f_l|_{[i_2,k_0+m+2]} \ne 0$, $l=1,2$. Hence
both $f_1$ and $f_2$ are separable.

It remains to consider the case of $c_n\pm c'_n=0$ for all $n\le i_2-1$.  In this case, we have
\begin{eqnarray*}
f_1 + f_2 &=& \sum_{i_2\le n\le k_0} c_n \varphi_m(\cdot-k), \\
f_1 - f_2 &=& \sum_{i_2\le n\le k_0} c'_n \varphi_m(\cdot-k),
\end{eqnarray*}
$(f_1+f_2)|_{E_1}=0$ and $(f_1-f_2)|_{E_2}=0$.

Let $E_0 = (E\cap [i_2, k_0+m+2]) \cup F$, where $F\subset (i_2-1,i_2) \cup (k_0+m+1, k_0+m+2)$,
$\# (F\cap (i_2-1, i_2)) = \#(F\cap (k_0+m+1,k_0+m+2)) = m+3$.
By Theorem~\ref{thm:PR}, $E_0$ is a phaseless sampling sequence for $V_m|_{[i_2-1,k_0+m+2]}$.
Since $|f_1(x)|=|f_2(x)|$ for $x\in E_0$ and $f_1 \pm f_2\ne 0$,
 both $f_1$ and $f_2$ are separable.
This completes the proof for $m\ge 2$.

Next we consider the case of $m=1$.
If $k_1=-\infty$ and $k_2=\infty$, then the above arguments also work for $m=1$.

If $k_2<\infty$, then $E\cap [n_{k_2}-1, n]$ is a phaseless sampling sequence for $V_1|_{[n_{k_2}-1, n]}$ for any $n>n_{k_2}$, thanks to Theorem~\ref{thm:PR}.
If $f_1$ or $f_2$ is nonseparable, then
\[
  (f_1+f_2)|_{[n_{k_2}-1, n]} =0\quad or\quad (f_1-f_2)|_{[n_{k_2}-1, n]} =0,\quad n>n_{k_2}.
\]

We conclude that one of $f_1+f_2$ and $f_1-f_2$ must be zero on $[n_{k_2}-1, \infty)$.
In fact, if $(f_1+f_2)|_{[n_{k_2}-1, n]}\ne 0$ for some $n>n_{k_2}$, then
$(f_1-f_2)|_{[n_{k_2}-1, n]} =0$ for all $n>n_{k_2}$.

Suppose that $f_1=f_2$ on $[n_{k_2}-1, \infty)$.
Let $\tilde E_2 = E_2 \cup \{n+x:\, n\ge n_{k_2}, x=1/3,2/3, 1\}$
and $\tilde E = E_1\cup \tilde E_2$.
Then $(f_1-f_2)|_{\tilde E_2} =0$ and $\#(\tilde E\cap[n,n+1]) \ge 3$ for $n\ge n_{k_2}$.

For the case of $k_1>-\infty$, with similar arguments we get $\tilde E=\tilde E_1\cup \tilde E_2$
such that
$(f_1+f_2)|_{\tilde E_1} =0$, $(f_1-f_2)|_{\tilde E_2} =0$,
and $\#(\tilde E\cap[n,n+1]) \ge 3$ for $n\le n_{k_1}-1$.
Now we see from previous arguments that both $f_1$ and $f_2$ are separable.

Finally, suppose that $|f_1(x)|=|f_2(x)|$ on $E$ for some $f_1,f_2\in V_m$
and $E$ meets (P1) and (P2) for $m\ge 2$ or (P1) and (P2') for $m=1$.
We see from the above arguments that there exist integers
$i_k<i'_k\le i_{k+1}<i'_{k+1}$, $k\in\bbZ$ such that $|f_1(x)|=|f_2(x)|$
on $[i_k, i'_k]$, $k\in\bbZ$. By Theorem~\ref{thm:PR}, it is easy to see that
$|f_1(x)|=|f_2(x)|$ for any $x\in\bbR$. This completes the proof.
\end{proof}

\section{Examples}

In this section, we give some examples to illustrate our main results on the characterization of phaseless sampling sequences.
The first one is on the almost phaseless sampling.
\begin{Example}\upshape
Let $N_1$, $N_2$ and $K$ be integers such that $N_1<N_2$.
Define
\[
  x_i = N_1 + \frac{i-1}{K-1} (N_2-N_1),\qquad 1\le i\le K.
\]
It follows from Theorem~\ref{thm:almost} that $E = \{x_i:\, 1\le i\le K\}$ is an almost phaseless sampling sequence for $\VN$ if $K\ge N_2-N_1+m+1$.
\end{Example}

The second example is on the local phaseless sampling.
\begin{Example}\upshape
Let $N_1$, $N_2$ and $K$ be integers such that $N_1<N_2-2$.
Define
\[
  x_i = N_1 + 1 + \frac{i-1}{K-1} (N_2-N_1-2),\qquad 1\le i\le K,
\]
and
\[
  y_i=N_1+\frac{i}{m+1},\quad z_i=N_2-\frac{i}{m+1},\qquad 0\le i\le m.
\]
We see from Theorem~\ref{thm:PR} that $E = \{x_i:\, 1\le i\le K\}\cup \{y_i:\, 0\le i\le m\}\cup \{z_i:\, 0\le i\le m\}$ is a phaseless sampling sequence for $\VN$ if $K\ge 2(N_2-N_1)-3$.
\end{Example}

The third example is on the global phaseless sampling, which is a simple consequence of Theorem~\ref{thm:global}.
It illustrates the difference between the case of $m=1$ and the other case of $m>1$.

\begin{Example}\upshape
Let $\alpha>\beta\ge 0$ be  constants.
\begin{enumerate}
\item For $m\ge 2$,  $\{n\alpha+\beta:\,n\in\bbZ\}$ is a phaseless sampling sequence for $V_m$ if and only if $0<\alpha<1/2$.

\item
For  $m=1$, $\{n\alpha+\beta:\,n\in\bbZ\}$ is a phaseless sampling sequence for $V_m$ if and only if $0<\alpha < 1/2$
or $\alpha=1/2$ and $\beta=0$.
\end{enumerate}

\end{Example}

%\bibliographystyle{abbrv}
%\bibliography {sampling}

\end{document}